\providecommand{\U}[1]{\protect\rule{.1in}{.1in}}
\providecommand{\U}[1]{\protect\rule{.1in}{.1in}}
\providecommand{\U}[1]{\protect\rule{.1in}{.1in}}
\providecommand{\U}[1]{\protect\rule{.1in}{.1in}}
\providecommand{\U}[1]{\protect\rule{.1in}{.1in}}
\theoremstyle{theorem}
\newtheorem{Theorem}{Theorem}[section]
\newtheorem{Lemma}[Theorem]{Lemma}
\newtheorem{Proposition}[Theorem]{Proposition}
\newtheorem{Corollary}[Theorem]{Corollary}
\newtheorem{Conjecture}[Theorem]{Conjecture}
\theoremstyle{definition}
\newtheorem{Definition}[Theorem]{Definition}
\newtheorem{Remark}[Theorem]{Remark}
\newtheorem{Example}[Theorem]{Example}
\newtheorem{Claim}[Theorem]{Claim}
\numberwithin{equation}{section}
\newcommand{\np}{\noindent}
\def\leq{\leqslant}
\def\geq{\geqslant}
\def\bibaut#1{{\sc #1}}
\def\phi{\varphi}
\def\ro[#1]{{\textcolor{red}{#1}}}
\begin{document}

\title{A class of adding machine and Julia sets}

\begin{abstract}
In this work we define a stochastic adding machine associated to the
Fibonacci base and to a probabilities sequence
$\overline{p}=(p_i)_{i\geq 1}$. We obtain a Markov chain whose
states are the set of nonnegative integers. We study probabilistic
properties of this chain, such as transience and recurrence. We also
prove that the spectrum associated to this Markov chain is connected
to the fibered Julia sets for a class of endomorphisms in
$\mathbb{C}^2$.
\end{abstract}

\author[Danilo Antonio Caprio]{Danilo Antonio Caprio}
\address{\sc Danilo Antonio Caprio\\
Instituto de Matem\'atica Pura e Aplicada\\
Estrada Dona Castorina 110\\
22460-320\\ Rio de Janeiro, RJ, Brasil} \email{caprio@impa.br}
\date{\today}
\subjclass[2010]{Primary 37A30, 37F50; Secondary 60J10, 47A10}
\keywords{Markov chains; stochastic adding machines; spectrum of
transition operator; Julia sets; fibered Julia sets}

\maketitle
\tableofcontents

\section*{Introduction}

The goal of this work is to study a problem in the areas of dynamical
systems, probability and spectral operator theory. These problems
involve stochastic adding machine, spectrum of transition operator
associated to Markov chains and fibered Julia sets of endomorphisms
in $\mathbb{C}^2$.

Let $g:\mathbb{C}\longrightarrow \mathbb{C}$ be an holomorphic map.
The filled Julia set associated to $g$ is by definition the set
$K(g)=\{z\in\mathbb{C}: g^n(z) \textrm{ is bounded}\}$, where $g^n$
is the $n-$th iterate of $g$.

A connection between Julia sets in $\mathbb{C}$ and stochastic
adding machine was given by Killeen and Taylor in \cite{kt1}. They
defined the stochastic adding machine as follows: let
$N\in\mathbb{N}=\{0,1,2,\ldots\}$ be a nonnegative integer. By using
the greedy algorithm we may write $N$ as
$N=\sum_{i=0}^{k(N)}\varepsilon_i(N)2^i$, in a unique
way, where $\varepsilon_i(N)\in\{0,1\}$, for all
$i\in\{0,\ldots,k(N)\}$. So, the representation of $N$ in base $2$
is given by $N=\varepsilon_{k(N)}(N)\ldots\varepsilon_0(N)$. They
defined a systems of evolving equation that calculates the digits of
$N+1$ in base $2$, introducing an auxiliary variable "carry $1$",
$c_i(N)$, for each digit $\varepsilon_i(N)$, as follows:

Define $c_{-1}(N+1)=1$ and for all $i\geq 0$, do

\begin{eqnarray}
\varepsilon_i(N+1)=(\varepsilon_{i}(N)+c_{i-1}(N+1))\mod 2;  \label{desc}\\
c_i(N+1)=\left[\frac{\varepsilon_i(N)+c_{i-1}(N+1)}{2}\right],
\nonumber
\end{eqnarray}
\np where $[x]$ is the integer part of $x\in\mathbb{R}^+$.

Killeen and Taylor \cite{kt1} defined a stochastic adding machine
considering a family of independent, identically distributed randon
variables $\{e_i(n):i\geq 0, \textrm{ } n\in\mathbb{N}\}$,
parametrized by nonnegative integers $i$ and $n$, where each
$e_i(n)$ takes the value $0$ with probability $1-p$ and the value $1$
with probability $p$. More precisely, they defined a stochastic
adding machine as follows: let $N$ be a nonnegative integer and
consider the sequences $(\varepsilon(N+1))_{i\geq 0}$ and
$(c_i(N+1))_{i\geq -1}$ defined by $c_{-1}(N+1)=1$ and for all
$i\geq 0$
\begin{eqnarray}
\varepsilon_i(N+1)=(\varepsilon_{i}(N)+e_i(N)c_{i-1}(N+1))\mod 2; \label{descp} \\
c_i(N+1)=\left[\frac{\varepsilon_i(N)+e_i(N)c_{i-1}(N+1)}{2}\right].
\nonumber
\end{eqnarray}
Killeen and Taylor \cite{kt1} studied the spectrum of the transition
operator $S$ associated to the stochastic adding machine in base
$2$, acting in $l^\infty(\mathbb{N})$, and they proved that the spectrum of $S$ in $l^\infty(\mathbb{N})$ is
equal to the filled Julia set of the quadratic map
$f:\mathbb{C}\longrightarrow\mathbb{C}$ defined by
$f(z)=\left(\frac{z-(1-p)}{p}\right)^2$, i.e.
$\sigma(S)=\{z\in\mathbb{C}:(f^n(z))_{n\geq 0} \textrm{ is
bounded}\}$.

In \cite{msv} and \cite{mv}, the authors considered the stochastic
adding machine taking a probabilities sequence
$\overline{p}=(p_i)_{i\geq 1}$, where the probability change in each
state, i.e. on the description (\ref{descp}) we have $e_i(N)=1$ with
probability $p_{i+1}$ and $e_i(N)=0$ with probability $1-p_{i+1}$,
for all $i\geq 0$, and they constructed the transition operator
$S_{\overline{p}}$ related to this probabilities sequence.

In particular, in \cite{msv} they proved that the Markov chain is null recurrent if only if
$\prod_{i=1}^{+\infty}p_i=0$. Otherwise the chain is transient. Furthermore, they proved that the spectrum of $S_{\overline{p}}$ acting in $c_0$,
$c$ and $l^\alpha$, $1\leq\alpha\leq\infty$, is equal to the fibered
Julia set
$E_{\overline{p}}:=\left\{z\in\mathbb{C}: (\tilde{f}_j(z)) \textrm{ is bounded}\right\}$,
where $\tilde{f}_j:=f_j\circ\ldots\circ f_1$ and
$f_j:\mathbb{C}\longrightarrow\mathbb{C}$ are maps defined by
$f_j(z)=\left(\frac{z-(1-p_j)}{p_j}\right)^2$, for all $j\geq 1$. Moreover, the spectrum of $S_{\overline{p}}$ in $l^\infty(\mathbb{N})$ is equal to the point spectrum.

In this paper, instead of base $2$, we will consider the Fibonacci
base $(F_n)_{n\geq 0}$ defined by $F_n=F_{n-1}+F_{n-2}$, for all
$n\geq 2$, where $F_0=1$ and $F_1=2$. Also, we will consider a
probabilities sequence $\overline{p}=(p_i)_{i\geq 1}$, instead of an
unique probability $p$ (as was done in \cite{ms} and \cite{um}, for a large class
of recurrent sequences of order $2$). Thenceforth, we will define
the Fibonacci stochastic adding machine and considering the transition operator $S$, we will prove that the Markov chain is transient if only if $\prod_{i=1}^{\infty}p_i>0$. Otherwise, if $\sum_{i=1}^{+\infty}p_i=+\infty$ then the Markov chains is null recurrent and if $\sum_{i=2}^{+\infty}p_iF_{2(i-1)}<+\infty$ then
the Markov chain is positive recurrent.

We will compute the point spectrum and prove that it is connected to the fibered Julia sets for a class of endomorphisms in $\mathbb{C}^2$. Precisely, $\sigma_{pt}(S)\subset E \subset \sigma_a(S)$ where $E=\{z\in\mathbb{C}:(g_n\circ\ldots\circ g_0(z,z))_{n\geq 0}\textrm{ is bounded}\}$ and $g_n:\mathbb{C}^2\longrightarrow\mathbb{C}^2$ are maps defined by $g_0(x,y)=\left(\frac{x-(1-p_1)}{p_1},\frac{y-(1-p_1)}{p_1}\right)$ and $g_n(x,y)=\left(\frac{1}{r_n}xy-\left(\frac{1}{r_n}-1\right),x\right)$ for all $n\geq 1$, where $r_n=p_{\left[\frac{n+1}{2}\right]+1}$. Moreover, if $\liminf_{i\to+\infty}p_i>0$ then $E$ is compact and $\mathbb{C}\setminus E$ is connected.

The paper is organized as follows. In subsection 1.1 we give the definition of Fibonacci adding machine and explain how this machine is defined by a finite transductor. In subsection 1.2 we define the stochastic Fibonacci adding machine and we obtain the transition operator of the Markov chain. In section 2 we give a necessary and sufficient condition for transience, a sufficient condition for null recurrence and for positive recurrence. In section 3 is devoted to provide an exact description of the spectra of these transition operators acting on $l^\infty(\mathbb{N})$. The section 4 contains results about connectedness properties of the fibered Julia sets.

\section{Stochastic Fibonacci adding machine}\label{fam}

\subsection{Adding machine}\label{amt}

Let $(F_i)_{i\geq 0}$ be the Fibonacci sequence defined by $F_0=1$,
$F_1=2$ and $F_n=F_{n-1}+F_{n-2}$, for all $n\geq 2$. By using the
greedy algorithm we can write every nonnegative integer number $N$,
in a unique way, as $N=\sum_{i=0}^{k(N)}\varepsilon_i(N)F_i$, where
$\varepsilon_i(N)=0$ or $1$ and
$\varepsilon_{i+1}(N)\varepsilon_i(N) \neq 11$, for all $0\leq i<
k(N)$.

\begin{Example}
$a)$ $12=8+3+1=F_4+F_2+F_0=10101$; $b)$ $14=13+1=F_5+F_1=100010$.
\end{Example}

A way to calculate the digits of $N+1$ in Fibonacci base is
given by a finite transducer $\mathcal{T}$ on $A^*\times A^*$, where
$A=\{0,1\}$ is a finite alphabet and $A^*$ is the set of finite
words on $A$. The transducer $\mathcal{T}$ of the Fibonacci adding
machine, represented on figure \ref{maquinafib}, is formed by two
states, an initial state $I$ and a terminal state $T$. The initial
state is connected to itself by two arrows. One of them is labelled
by $(10/00)$ and the other by $(1/0)$. There is also one arrow going
from the initial state to the terminal one. This arrow is labelled
by $(00/01)$. The terminal state is connected to itself by two
arrows. One of them is labelled by $(0/0)$ and the other by $(1/1)$.

\begin{Remark}
The transducer $\mathcal{T}$ was defined in \cite{um}.
\end{Remark}

\begin{figure}[!h]
\centering
\includegraphics[scale=0.4]{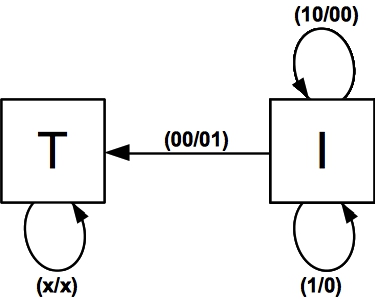}
\caption{Transductor of adding machine in Fibonacci base.}
\label{maquinafib}
\end{figure}
\begin{Example}
If $N=17=100101$ then $N$ corresponds to the path 
\begin{center}
$(T,1/1,T)(T,00/01,I)(I,10/00,I)(I,1/0,I)$.
\end{center}
Hence, $N+1=1010000=18$.
\end{Example}

Like was defined by Killeen and Taylor in description (\ref{desc}), we may construct an algorithm that computes the digits of $N+1$.
This algorithm can be described by introducing an auxiliary binary
"carry" variable $c_i(N+1)$ for each digit $\varepsilon_i(N)$ by the
following manner:

Put $c_{-1}(N+1)=1$ and

$i)$ If $\varepsilon_0(N)=0$ then for all $i\geq 0$, do:
\begin{eqnarray}
\varepsilon_{2i}(N+1)=\left[\frac{\varepsilon_{2i}(N)+c_{i-1}(N+1)}{\varepsilon_{2i+1}(N)c_{i-1}(N+1)+1}\right]; \nonumber \\
\varepsilon_{2i+1}(N+1)=\left[\frac{\varepsilon_{2i+1}(N)}{c_{i-1}(N+1)+1}\right]; \label{rev1}\\
c_i(N+1)=c_{i-1}(N+1)\varepsilon_{2i+1}(N). \nonumber
\end{eqnarray}
$ii)$ If $\varepsilon_0(N)=1$ then put $\varepsilon_0(N+1)=0$,
$c_0(N+1)=1$ and, for all $i\geq 1$, do:
\begin{eqnarray}
\varepsilon_{2i-1}(N+1)=\left[\frac{\varepsilon_{2i-1}(N)+c_{i-1}(N+1)}{\varepsilon_{2i}(N)c_{i-1}(N+1)+1}\right]; \nonumber \\
\varepsilon_{2i}(N+1)=\left[\frac{\varepsilon_{2i}(N)}{c_{i-1}(N+1)+1}\right]; \label{rev2}\\
c_i(N+1)=c_{i-1}(N+1)\varepsilon_{2i}(N).\nonumber
\end{eqnarray}

\begin{Remark}
If $c_{j-1}(N+1)=0$ for some nonnegative integer $j\geq 1$ then $c_i(N+1)=0$ for all $i\geq j-1$. Moreover, in description (\ref{rev1}) we have $\varepsilon_l(N+1)=\varepsilon_l(N)$ for all $l\geq 2j$ and in description (\ref{rev2}) we have $\varepsilon_l(N+1)=\varepsilon_l(N)$ for all $k\geq 2j-1$. \label{obsnao}
\end{Remark}

\begin{Example}
$a)$ Let $N=3=0100=\varepsilon_3(3)\varepsilon_2(3)\varepsilon_1(3)\varepsilon_0(3)$. By description (\ref{rev1}) we have $c_{-1}(4)=1$ and
$$
\left\{\begin{array}{l}
\varepsilon_0(4)=\left[\frac{\varepsilon_0(3)+c_{-1}(4)}{\varepsilon_1(3)c_{-1}(4)+1}\right] =1; \\
\varepsilon_1(4)\left[\frac{\varepsilon_1(3)}{c_{-1}(4)+1}\right]=0; \\
c_0(4)=c_{-1}(4)\varepsilon_1(3)=0;
\end{array}\right.  \textrm{ and } \left\{\begin{array}{l}
\varepsilon_2(4)=\left[\frac{\varepsilon_2(3)+c_{0}(4)}{\varepsilon_3(3)c_{0}(4)+1}\right] =1; \\
\varepsilon_3(4)\left[\frac{\varepsilon_3(3)}{c_{0}(4)+1}\right]=0; \\
c_1(4)=c_{0}(4)\varepsilon_3(3)=0.
\end{array}\right.
$$

Hence, $\varepsilon_3(4)\varepsilon_2(4)\varepsilon_1(4)\varepsilon_0(4)=0101=4=N+1$.

\np $b)$ Let $N=4=00101=\varepsilon_4(4)\varepsilon_3(4)\varepsilon_2(4)\varepsilon_1(4)\varepsilon_0(4)$. By description (\ref{rev2}) we have $\varepsilon_0(5)=0$, $c_{0}(5)=1$ and
$$
\left\{\begin{array}{l}
\varepsilon_1(5)=\left[\frac{\varepsilon_1(4)+c_{0}(5)}{\varepsilon_2(4)c_{0}(5)+1}\right] =0; \\
\varepsilon_2(5)\left[\frac{\varepsilon_2(4)}{c_{0}(5)+1}\right]=0; \\
c_1(5)=c_{0}(5)\varepsilon_2(4)=1;
\end{array}\right.  \textrm{ and } \left\{\begin{array}{l}
\varepsilon_3(5)=\left[\frac{\varepsilon_3(4)+c_{1}(5)}{\varepsilon_4(4)c_{1}(5)+1}\right] =1; \\
\varepsilon_4(5)\left[\frac{\varepsilon_4(4)}{c_{1}(5)+1}\right]=0; \\
c_2(5)=c_{1}(5)\varepsilon_4(5)=0.
\end{array}\right.
$$

Hence, $\varepsilon_4(5)\varepsilon_3(5)\varepsilon_2(5)\varepsilon_1(5)\varepsilon_0(5)=01000=5=N+1$.
\end{Example}

\begin{Theorem}
Let $N$ be a nonnegative integer number and $\varepsilon_k(N)\ldots
\varepsilon_0(N)$ its representation in Fibonacci base. The
algorithms (\ref{rev1}) and (\ref{rev2}) give for us the digits of
$N+1$ in Fibonacci base.
\end{Theorem}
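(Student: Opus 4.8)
The plan is to prove that the sequence $(\varepsilon_l(N+1))_{l\geq 0}$ produced by the algorithms (\ref{rev1})--(\ref{rev2}) is precisely the unique Fibonacci expansion of $N+1$. By the greedy algorithm it suffices to verify three facts about this output sequence: \textbf{(a)} each $\varepsilon_l(N+1)\in\{0,1\}$ and only finitely many are nonzero; \textbf{(b)} $\sum_{l\geq 0}\varepsilon_l(N+1)F_l=N+1$; and \textbf{(c)} $\varepsilon_{l+1}(N+1)\varepsilon_l(N+1)\neq 11$ for all $l\geq 0$. I will treat the cases $\varepsilon_0(N)=0$ (description (\ref{rev1})) and $\varepsilon_0(N)=1$ (description (\ref{rev2})) in parallel; they differ only by a shift of one index at the low-order end, since in the second case the first step consumes position $0$ and initializes $c_0(N+1)=1$, so that the blocks processed become $\{2i-1,2i\}$ rather than $\{2i,2i+1\}$.

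First a structural remark on the carries, which underlies everything. In either description $c_i(N+1)\in\{0,1\}$, and inspecting the recursion for $c_i$ one sees that $c_{i-1}(N+1)=1$ forces a digit $1$ of $N$ at the odd position immediately below the block processed at step $i$ (position $2i-1$ in (\ref{rev1}), position $2i-2$ in (\ref{rev2})); the no-$11$ property of the Fibonacci expansion of $N$ then forces $\varepsilon_{2i}(N)=0$ (resp. $\varepsilon_{2i-1}(N)=0$). Hence the only combination of numerator and denominator in the floor expressions that could yield a value $\geq 2$ never occurs, so $\varepsilon_l(N+1)\in\{0,1\}$; finiteness is immediate from Remark \ref{obsnao}, since once $c_{j-1}(N+1)=0$ the tail of $(\varepsilon_l(N+1))$ agrees with that of $(\varepsilon_l(N))$. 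This settles (a).

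The core of the proof is an induction on $i$ establishing the invariant
$$1+\sum_{l=0}^{2i+1}\varepsilon_l(N)F_l=\sum_{l=0}^{2i+1}\varepsilon_l(N+1)F_l+c_i(N+1)\,F_{2i+2}$$
in the case $\varepsilon_0(N)=0$, together with the analogous identity (summation range $0\leq l\leq 2i$, carry weight $F_{2i+1}$) in the case $\varepsilon_0(N)=1$. The base case $i=0$ is a direct check of the subcases $\varepsilon_1(N)=0$ and $\varepsilon_1(N)=1$ (resp. of the initialization $\varepsilon_0(N+1)=0$, $c_0(N+1)=1$). For the inductive step one splits on $c_{i-1}(N+1)$: if $c_{i-1}(N+1)=0$ the block is copied verbatim and the carry stays dead; if $c_{i-1}(N+1)=1$ the structural remark gives $\varepsilon_{2i}(N)=0$, leaving only $\varepsilon_{2i+1}(N)\in\{0,1\}$, and each of these is a one-line verification using $F_{2i+2}=F_{2i+1}+F_{2i}$ and the defining formulas. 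Letting $i\to\infty$ and invoking Remark \ref{obsnao} (the partial sums stabilize, and then $c_i(N+1)F_{2i+2}$ eventually vanishes) yields (b).

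For (c): a digit $\varepsilon_l(N+1)=1$ that is \emph{not} inherited from $\varepsilon_l(N)$ can occur only at an even position $2i$ (resp. odd position $2i-1$) that receives a live carry, i.e.\ exactly in the subcase $c_{i-1}(N+1)=1$, $\varepsilon_{2i}(N)=\varepsilon_{2i+1}(N)=0$; in that subcase the formulas force the partner digit of the block and, because the previous block also carried, the digit just below it, to be $0$ in $N+1$ (the $i=1$ boundary being covered by $c_{-1}(N+1)=1$). On the other hand a digit of $N+1$ equal to $1$ and inherited from $N$ necessarily has $c_{i-1}(N+1)=0$, hence a frozen tail, and keeps $0$-neighbours by the no-$11$ property of the expansion of $N$. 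Combining (a), (b), (c) with uniqueness of the Fibonacci expansion finishes the proof. I expect the only real obstacle to be the neighbour bookkeeping in (c) and keeping the one-index shift between (\ref{rev1}) and (\ref{rev2}) straight near position $0$; each individual computation is routine. As an independent check one can verify that the runs of (\ref{rev1})--(\ref{rev2}) on $\varepsilon(N)$ reproduce block by block the output of the transducer $\mathcal{T}$ of Figure \ref{maquinafib} (with $c=1$, $c=0$ matching the states $I$, $T$), which is already known to realize $N\mapsto N+1$.
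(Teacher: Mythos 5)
Your proposal is correct, but it takes a genuinely different route from the paper. The paper argues by an explicit case analysis on the trailing pattern of $N$: for $\varepsilon_0(N)=0$ it distinguishes the cases $N=\ldots 00$ and $N=\ldots 00(10)^l$, runs the algorithm (\ref{rev1}) on these blocks showing the carries stay alive exactly along the $(10)^l$ tail and die at the $00$ block, invokes Remark \ref{obsnao} to freeze the high-order digits, and then simply recognizes the resulting word as the Fibonacci representation of $N+1$; the case $\varepsilon_0(N)=1$ is treated symmetrically with (\ref{rev2}). You instead verify that the output is \emph{some} admissible expansion of $N+1$ (digits in $\{0,1\}$ with finite support, no factor $11$, value $N+1$ via the block invariant $1+\sum_{l\le 2i+1}\varepsilon_l(N)F_l=\sum_{l\le 2i+1}\varepsilon_l(N+1)F_l+c_i(N+1)F_{2i+2}$ and its shifted analogue) and then conclude by uniqueness of the greedy expansion. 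Your structural remark on carries is exactly right (when $c_{i-1}(N+1)=1$ the carry recursion forces a digit $1$ just below the current block, hence by the no-$11$ condition the lower digit of the block is $0$, so no floor expression ever exceeds $1$), the invariant does check out in all subcases using $F_{2i+2}=F_{2i+1}+F_{2i}$, and the neighbour bookkeeping for (c) goes through as you describe (only note that in (\ref{rev2}) the forced digit of $N$ sits at the \emph{even} position $2i-2$, a harmless slip). What your approach buys is that you never need to know in advance what the expansion of $N+1$ looks like — uniqueness does that work — at the cost of having to prove the no-$11$ property (c) of the output, which the paper gets for free because it writes the output pattern down explicitly; conversely the paper's proof is shorter on bookkeeping but relies on the reader accepting the arithmetic identity behind $00(10)^l+1=01\,0^{2l}$ without comment.
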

\begin{proof} Let $N=\varepsilon_k(N)\ldots \varepsilon_0(N)$ and assume that $\varepsilon_0(N)=0$.

If $\varepsilon_1(N)=0$ then by de system (\ref{rev1}) we have $\varepsilon_0(N+1)=1$,
$\varepsilon_1(N+1)=0$, $c_0(N+1)=0$ and, by the remark \ref{obsnao},
$\varepsilon_i(N+1)=\varepsilon_i(N)$, for all $i\geq 2$. Therefore,
$\varepsilon_k(N+1)\ldots\varepsilon_2(N+1)
\varepsilon_1(N+1)\varepsilon_0(N+1)=
\varepsilon_k(N)\ldots\varepsilon_2(N)01=N+1$.

If $N=\varepsilon_k(N)\ldots\varepsilon_{2l+2}(N)00(10)^l$, for some
$l\geq 1$ then $\varepsilon_{2i+1}(N)\varepsilon_{2i}(N)=10$, for all
$i\in\{0,\ldots, l-1\}$, and
$\varepsilon_{2l+1}(N)\varepsilon_{2l}(N)=00$. Thus, by the system
(\ref{rev1}), we have $\varepsilon_{2i+1}(N)\varepsilon_{2i}(N)=00$
and $c_i(N+1)=1$, for all $i\in\{0,\ldots, l-1\}$,
$\varepsilon_{2l+1}(N)\varepsilon_{2l}(N)=01$ and $c_l(N+1)=0$.
Therefore, by remark \ref{obsnao},
$\varepsilon_i(N+1)=\varepsilon_i(N)$, for all $i\geq 2l+2$, and
$\varepsilon_k(N+1)\ldots\varepsilon_{2l+2}(N+1)
\varepsilon_{2l+1}(N+1)\varepsilon_{2l}(N+1)\varepsilon_{2l-1}(N+1)
\ldots\varepsilon_0(N+1)= \\
\varepsilon_k(N)\ldots\varepsilon_{2l+2}(N)01\underbrace{0\ldots
0}_{2l-1}=N+1$.

The case $\varepsilon_0(N)=1$ can be done by the same way using the system (\ref{rev2}).
\end{proof}

\subsection{Stochastic adding machine}\label{sam}

A way to construct the stochastic Fibonacci adding machine is
by a "probabilistic" transducer $\mathcal{T}_{\overline{p}}$ (see
Fig.\ref{maqpi}).

The states of $\mathcal{T}_{\overline{p}}$ are $T$ and $I_i$, for all $i\geq
1$. The labels are of the form $(0/0,1)$, $(1/1,1)$,
$(a/b,p_i)$ or $(a/a,1-p_i)$, for all $i\geq 1$, where $a/b$ is a
label in $\mathcal{T}$ . The labelled edges in
$\mathcal{T}_{\overline{p}}$ are of the form $(T,(x/x,1),T)$ where
$x\in\{0, 1\}$, $(I_{i+1},(a/b,p_i),I_i)$ and $(T,(a/a,1-p_i),I_i)$ where
$(I,a/b,I)$ is a labelled edge in $\mathcal{T}$ or $(T,(a/b,p_i),I_i)$ and $(T,(a/a,1-p_i),I_i)$ where
$(T,a/b,I)$ is a labelled edge in $\mathcal{T}$, for all $i\geq 1$.

The stochastic process $\psi(N)$, with state space $\mathbb{N}$, is
defined by $\psi(N)=\sum_{i=0}^{+\infty}\varepsilon_i(N)F_i$ where
$(\varepsilon_i(N))_{i\geq 0}$ is an infinite sequence of $0$ or $1$
without two $1$ consecutive and with finitely many no zero terms.
The sequence $(\varepsilon_i(N))_{i\geq 0}$ is defined in the
following way:

Put $\varepsilon_i(0)=0$ for all $i$ and assume that we have defined
$(\varepsilon_i(N-1))_{i\geq 0}$ with $N\geq 1$. In the transducer
$\mathcal{T}_{\overline{p}}$, consider the path

\begin{center}
$\ldots (T,(0/0,1),T)\ldots (T,(0/0,1),T)(s_{n+1},(a_n/b_n,t_n),s_n)
\ldots (s_1,(a_0/b_0,t_0),s_0)$
\end{center}

\np where $s_0=I_1$ and $s_{n+1}=T$, such the words $\ldots
00\varepsilon_n(N-1)\ldots \varepsilon_0(N-1)$ and $\ldots
00a_n\ldots a_0$ are equal. We define the sequence
$(\varepsilon_i(N))_{i\geq 0}$ as the infinite sequence whose terms
are $0$ or $1$ such that $\ldots 00\varepsilon_n(N)\ldots
\varepsilon_0(N)=\ldots 00b_n\ldots b_0$.

We remark that $\psi(N-1)$ transitions to $\psi(N)$ with probability $p_{\psi(N-1)\psi(N)}=t_nt_{n-1}\ldots t_0$.

\begin{Example}
If $N=17=100101$ then in the transducer $\mathcal{T}$ of
Fibonacci adding machine, $N$ corresponds to the path $(T,1/1,T) (T,00/01,I)(I,10/00,I)(I,1/0,I)$. In the transducer $\mathcal{T}_{\overline{p}}$ of the stochastic Fibonacci adding machine we have the followings paths:

\np $a)$ $(T,(1/1,1),T)(T,(0/0,1),T) (T,(0/0,1),T)
(T,(1/1,1),T)(T,(0/0,1),T)(T,(1/1,1-p_1),$ $I_1)$. In this case $N=17$
transitions to $17$ with probability $1-p_1$.

\np $b)$ $(T,(1/1,1),T)(T,(0/0,1),T) (T,(0/0,1),T)
(T,(10/10,1-p_2),I_2)(I_2,(1/0,p_1),I_1)$. In this case $N=17$
transitions to $16$ with probability $p_1(1-p_2)$.

\np $c)$ $(T,(1/1,1),T)(T,(00/00,1-p_3),I_3)
(I_3,(10/00,p_2),I_2)(I_2,(1/0,p_1),I_1)$. In this case $N=17$
transitions to $13$ with probability $p_1p_2(1-p_3)$.

\np $d)$ $(T,(1/1,1),T)(T,(00/01,p_3),I_3)
(I_3,(10/00,p_2),I_2)(I_2,(1/0,p_1),I_1)$. In this case $N=17$
transitions to $18$ with probability $p_1p_2p_3$.
\end{Example}

An other way to construct the stochastic Fibonacci adding machine is the same that Killeen and Taylor did on description (\ref{descp}) generalizing the descriptions (\ref{rev1}) and (\ref{rev2}) to
include fallible adding machines by the following manner: let $\overline{p}=(p_i)_{i\geq 1}\subset]0,1]$ be a probabilities sequence and $\{e_i(N):i\geq 0\, \textrm{ } N\in\mathbb{N} \}$ be an
independent, identically distributed family of random variables
parametrized by natural numbers $i,n$ where, for each nonnegative
integer $N$, we have $e_i(N)=0$ with probability  $1-p_{i+1}$ and
$e_i(N)=1$ with probability $p_{i+1}$, for all $i\geq 0$.

Let $N$ a nonnegative integer. Given a sequence
$(\varepsilon_i(N))_{i\geq 0}$ of $0's$ and $1's$ such that
$\varepsilon_i(N)=1$ for finitely many indices $i$ and
$\varepsilon_{i+1}(N)\varepsilon_i(N)\neq 11$, for all $i\geq 0$, we
consider the sequences $(\varepsilon_i(N+1))_{i\geq 0}$ and
$(c_i(N+1))_{i\geq -1}$, defined by $c_{-1}(N+1)=1$ and

$a)$ If $\varepsilon_0(N)=0$ then, for all $i\geq 0$, do

$$
\left\{\begin{array}{l}
\varepsilon_{2i}(N+1)=\left[\frac{\varepsilon_{2i}(N)+e_i(N)c_{i-1}(N+1)}{e_i(N)\varepsilon_{2i+1}(N) c_{i-1}(N+1)+1}\right]; \\
\varepsilon_{2i+1}(N+1)=\left[\frac{\varepsilon_{2i+1}(N)}{e_i(N)c_{i-1}(N+1)+1}\right]; \\
c_i(N+1)=e_i(N)c_{i-1}(N+1)\varepsilon_{2i+1}(N).
\end{array}\right.
$$

$b)$ If $\varepsilon_0(N)=1$ then
$\varepsilon_0(N+1)=\left[\frac{1}{e_0(N)+1}\right]$,
$c_0(N+1)=e_0(N)$ and, for all $i\geq 1$, do

$$
\left\{\begin{array}{l}
\varepsilon_{2i-1}(N+1)=\left[\frac{\varepsilon_{2i-1}(N)+e_i(N)c_{i-1}(N+1)}{e_i(N)\varepsilon_{2i}(N) c_{i-1}(N+1)+1}\right]; \\
\varepsilon_{2i}(N+1)=\left[\frac{\varepsilon_{2i}(N)}{e_i(N)c_{i-1}(N+1)+1}\right]; \\
c_i(N+1)=e_i(N)c_{i-1}(N+1)\varepsilon_{2i}(N).
\end{array}\right.
$$

The systems $a)$ and $b)$ gives to us the Fibonacci stochastic adding
machine.

\begin{figure}[!h]
\centering
\includegraphics[scale=0.5]{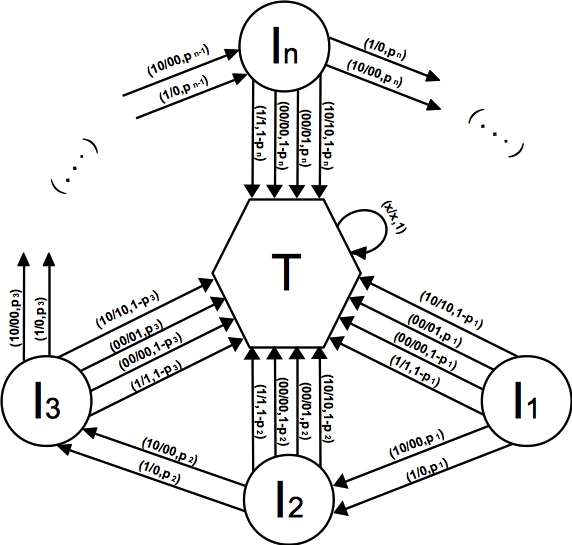}
\caption{Transducer $\mathcal{T}_{\overline{p}}$ of the stochastic
Fibonacci adding machine. } \label{maqpi}
\end{figure}

With the transition probabilities, we obtain the countable
transition matrix of the Markov chain
$S=S_{\overline{p}}=(S_{i,j})_{i,j\geq 0}$. This Markov chain is irreducible and aperiodic.
To help the reader, the first entries of the matrix $S$ are given by table \ref{matrizspi}.

\begin{figure}[!h]
\centering
\includegraphics[scale=0.435]{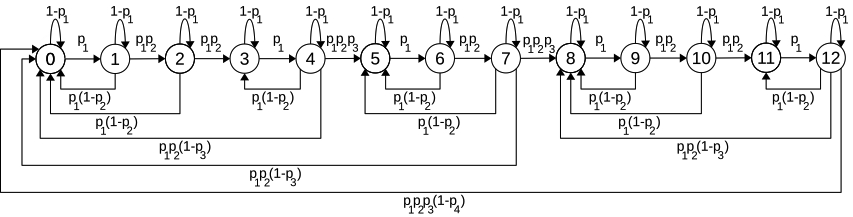}
\caption{Transition graph of the stochastic Fibonacci adding
machine. } \label{grafo}
\end{figure}

\begin{table}[htb]
\centering \tiny
$$
\left(
\begin{tabular}{ccccccccccl}
 $1-p_1$ & $p_1$ & 0 & 0 & 0 & 0 & 0 & 0 & 0 & 0 & $\ldots$  \\
 $p_1(1-p_2)$ & $1-p_1$ & $p_1p_2$ & 0 & 0 & 0 & 0 & 0 & 0 & 0 & $\ldots$  \\
 $p_1(1-p_2)$ & 0 & $1-p_1$ & $p_1p_2$ & 0 & 0 & 0 & 0 & 0 & 0 & $\ldots$  \\
 0 & 0 & 0 & $1-p_1$ & $p_1$ & 0 & 0 & 0 & 0 & 0 & $\ldots$  \\
 $p_1p_2(1-p_3)$ & 0 & 0 & $p_1(1-p_2)$ & $1-p_1$ & $p_1p_2p_3$ & 0 & 0 & 0 & 0 & $\ldots$ \\
 0 & 0 & 0 & 0 & 0 & $1-p_1$ & $p_1$ & 0 & 0 & 0 & $\ldots$ \\
 0 & 0 & 0 & 0 & 0 & $p_1(1-p_2)$ & $1-p_1$ & $p_1p_2$ & 0 & 0 & $\ldots$ \\
 $p_1p_2(1-p_3)$ & 0 & 0 & 0 & 0 & $p_1(1-p_2)$ & 0 & $1-p_1$ & $p_1p_2p_3$ & 0 & $\ldots$ \\
 0 & 0 & 0 & 0 & 0 & 0 & 0 & 0 & $1-p_1$ & $p_1$ & $\ldots$ \\
 0 & 0 & 0 & 0 & 0 & 0 & 0 & 0 & $p_1(1-p_2)$ & $1-p_1$ & $\ldots$  \\
 0 & 0 & 0 & 0 & 0 & 0 & 0 & 0 & $p_1(1-p_2)$ & 0 & $\ldots$ \\
  0 & 0 & 0 & 0 & 0 & 0 & 0 & 0 & 0 & 0 & $\ldots$  \\
 $p_1p_2p_3(1-p_4)$ & 0 & 0 & 0 & 0 & 0 & 0 & 0 & $p_1p_2(1-p_3)$ & 0 & $\ldots$ \\
  0 & 0 & 0 & 0 & 0 & 0 & 0 & 0 & 0 & 0 & $\ldots$  \\
  \vdots & \vdots & \vdots & \vdots & \vdots & \vdots & \vdots & \vdots & \vdots & \vdots & $\ddots$  \\
\end{tabular}
\right)
$$
\caption{First entries of the transition operator $S$.}
\label{matrizspi}
\end{table}

\begin{Proposition}
Let $N$ be a nonnegative integer. Then the following results are
satisfied.

\np $\textbf{i)}$  $N$ transitions to $N$ with probability $1-p_1$.

\np $\textbf{ii)}$ If $N=\varepsilon_k\ldots\varepsilon_0$ with
$\varepsilon_1\varepsilon_0=00$ then $N$ transitions to $N+1$ with
probability $p_1$.

\np $\textbf{iii)}$ If $N=\varepsilon_k\ldots\varepsilon_t(10)^s$
with $\varepsilon_{t+1}\varepsilon_t=00$, $s\geq 1$ and $t=2s$ then
$N$ transitions to $N+1$ with probability $p_1p_2\ldots p_{s+1}$ and
$N$ transitions to $N-F_{2m}+1$, $0< m\leq s$, with probability
$p_1p_2\ldots p_m(1-p_{m+1})$.

\np $\textbf{iv)}$ If $N=\varepsilon_k\ldots\varepsilon_t(10)^s1$
with $\varepsilon_{t+1}\varepsilon_t=00$, $s\geq 1$ and $t= 2s+1$
then $N$ transitions to $N+1$ with probability $p_1p_2\ldots
p_{s+2}$ and $N$ transitions to $N-F_{2m-1}+1$, $0<m\leq s+1$, with
probability $p_1p_2\ldots p_m(1-p_{m+1})$. \label{prop431}
\end{Proposition}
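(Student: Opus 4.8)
My plan is to verify each of the four statements by directly running the recursive systems $a)$ and $b)$ that define the Fibonacci stochastic adding machine, splitting into cases according to the realized values of the independent variables $e_i(N)$ (recall $e_i(N)=1$ with probability $p_{i+1}$) and tracking the carry $c_i(N+1)$, using Remark~\ref{obsnao} to freeze every higher digit once the carry vanishes. Since iii) has $\varepsilon_0(N)=0$ and iv) has $\varepsilon_0(N)=1$, the former is treated with system $a)$ and the latter with system $b)$, while i) uses whichever applies. The one structural fact that organizes everything is obtained by unwinding the carry recursion from $c_{-1}(N+1)=1$: in system $a)$,
\[
c_i(N+1)=\Big(\prod_{j=0}^{i}e_j(N)\Big)\Big(\prod_{j=0}^{i}\varepsilon_{2j+1}(N)\Big),
\]
and similarly in system $b)$, where $c_0(N+1)=e_0(N)$ and $c_i(N+1)=\big(\prod_{j=0}^{i}e_j(N)\big)\big(\prod_{j=1}^{i}\varepsilon_{2j}(N)\big)$ for $i\geq 1$. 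Thus the carry survives through step $i$ iff $e_0(N)=\cdots=e_i(N)=1$ and the corresponding digits of $N$ are all $1$; as soon as one factor is $0$ the carry dies and, by Remark~\ref{obsnao}, all higher digits stay unchanged. I will also use the telescoping identities $\sum_{j=0}^{i-1}F_{2j+1}=F_{2i}-1$ and $\sum_{j=0}^{i-1}F_{2j}=F_{2i-1}-1$ (with $F_0=1$, $F_1=2$), both immediate by induction. Statement i) is then almost free: if $e_0(N)=0$ the first line of $a)$ or $b)$ gives $\varepsilon_0(N+1)=\varepsilon_0(N)$ and $c_0(N+1)=0$, so every digit is unchanged and $N$ transitions to $N$; and if $e_0(N)=1$ the computations in ii)--iv) (together with the elementary sub-case $N$ ending in $001$) show the new state is always $\leq N-1$ or $\geq N+1$, so $\{N\to N\}=\{e_0(N)=0\}$, of probability $1-p_1$. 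Statement ii) is the special case $\varepsilon_1(N)=\varepsilon_0(N)=0$: system $a)$ at $i=0$ gives $\varepsilon_0(N+1)=e_0(N)$, $\varepsilon_1(N+1)=0$, $c_0(N+1)=0$, hence $N$ transitions to $N+F_0=N+1$ exactly on $\{e_0(N)=1\}$, of probability $p_1$.

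For iii) we have $\varepsilon_0(N)=0$, $\varepsilon_{2j+1}(N)\varepsilon_{2j}(N)=10$ for $0\leq j\leq s-1$ and $\varepsilon_{2s+1}(N)=\varepsilon_{2s}(N)=0$; in particular $\varepsilon_1(N)=1$, so $c_0(N+1)=e_0(N)$. Assume $e_0(N)=1$ (the alternative is i)). An induction on $i$ using system $a)$ shows that on $\{e_0(N)=\cdots=e_{i-1}(N)=1\}$, for $1\leq i\leq s$, the carry is alive at step $i-1$ and $\varepsilon_{2j+1}(N+1)\varepsilon_{2j}(N+1)=00$ for $0\leq j\leq i-1$. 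If $i\in\{1,\dots,s-1\}$ is the first index with $e_i(N)=0$, step $i$ leaves $\varepsilon_{2i}(N+1)=0$ and $\varepsilon_{2i+1}(N+1)=1$ and kills the carry, so the only changes are the flips $\varepsilon_{2j+1}\varepsilon_{2j}\colon 10\to 00$ for $0\leq j\leq i-1$; the value drops by $\sum_{j=0}^{i-1}F_{2j+1}=F_{2i}-1$, i.e. $N\to N-F_{2i}+1$, with probability $p_1\cdots p_i(1-p_{i+1})$ (case $m=i$). If instead $e_0(N)=\cdots=e_{s-1}(N)=1$, the terminal pair $\varepsilon_{2s+1}(N)\varepsilon_{2s}(N)=00$ forces $c_s(N+1)=0$ with $\varepsilon_{2s}(N+1)=e_s(N)$ and $\varepsilon_{2s+1}(N+1)=0$; so $e_s(N)=0$ gives $N\to N-F_{2s}+1$ with probability $p_1\cdots p_s(1-p_{s+1})$ (case $m=s$), whereas $e_s(N)=1$ adds $F_{2s}$, making the net change $-(F_{2s}-1)+F_{2s}=+1$, so $N\to N+1$ with probability $p_1\cdots p_{s+1}$. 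A telescoping sum verifies that $1-p_1$ and these probabilities add to $1$.

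Statement iv) is the mirror image with $\varepsilon_0(N)=1$: now $\varepsilon_{2i}(N)\varepsilon_{2i-1}(N)=10$ for $1\leq i\leq s$ and $\varepsilon_{2s+2}(N)=\varepsilon_{2s+1}(N)=0$, and system $b)$ gives $\varepsilon_0(N+1)=[1/(e_0(N)+1)]$, $c_0(N+1)=e_0(N)$. Assuming $e_0(N)=1$ we get $\varepsilon_0(N+1)=0$, and the same induction shows that on $\{e_0(N)=\cdots=e_{i-1}(N)=1\}$, $1\leq i\leq s$, the carry is alive at step $i-1$ with $\varepsilon_{2j}(N+1)=0$ for $1\leq j\leq i-1$. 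If $i\in\{1,\dots,s\}$ is the first index with $e_i(N)=0$, step $i$ leaves $\varepsilon_{2i-1}(N+1)=0$, $\varepsilon_{2i}(N+1)=1$ and kills the carry, so the net change is $-F_0-\sum_{j=1}^{i-1}F_{2j}=-(F_{2i-1}-1)$, i.e. $N\to N-F_{2i-1}+1$ with probability $p_1\cdots p_i(1-p_{i+1})$ (case $m=i$). If $e_0(N)=\cdots=e_s(N)=1$, then at step $s+1$ the pair $\varepsilon_{2s+2}(N)\varepsilon_{2s+1}(N)=00$ gives $c_{s+1}(N+1)=0$ with $\varepsilon_{2s+1}(N+1)=e_{s+1}(N)$; so $e_{s+1}(N)=0$ gives $N\to N-F_{2s+1}+1$ with probability $p_1\cdots p_{s+1}(1-p_{s+2})$ (case $m=s+1$), and $e_{s+1}(N)=1$ adds $F_{2s+1}$, for net change $+1$, so $N\to N+1$ with probability $p_1\cdots p_{s+2}$. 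The only real work is the bookkeeping in iii) and iv): pinning down exactly which digits of $N$ are altered before the carry dies and summing the relevant $F_j$'s. Once the carry formula above and the two telescoping identities are recorded, every transition probability in i)--iv) is a one-line substitution.
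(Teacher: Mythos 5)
Your proof is correct, but it takes a different route from the paper. The paper proves the proposition by exhibiting explicit accepting paths in the probabilistic transducer $\mathcal{T}_{\overline{p}}$: for item (ii) a single path ending in the edge labelled $(00/01,p_1)$, and for item (iii) two families of paths (one through the edge $(00/01,p_{s+1})$ followed by $s$ edges $(10/00,p_j)$, giving $N\to N+1$ with probability $p_1\cdots p_{s+1}$; one entering at $(10/10,1-p_{m+1})$ and then descending through $(10/00,p_j)$, giving $N\to N-F_{2m}+1$ with probability $p_1\cdots p_m(1-p_{m+1})$), with items (i) and (iv) left to the reader. You instead work entirely with the second description of the machine, the random digit--carry systems $a)$ and $b)$: you unwind the carry recursion into the product formula $c_i(N+1)=\bigl(\prod_{j\le i}e_j(N)\bigr)\bigl(\prod_j\varepsilon_{2j+1}(N)\bigr)$ (and its analogue for $b)$), identify the first index at which the carry dies, and convert the resulting block of flips $10\to 00$ into the value change via the telescoping identities $\sum_{j=0}^{i-1}F_{2j+1}=F_{2i}-1$ and $\sum_{j=0}^{i-1}F_{2j}=F_{2i-1}-1$; your case analysis at the terminal pair $00$ correctly separates the $N\to N-F_{2m}+1$ events from the $N\to N+1$ event, and the probabilities $p_1\cdots p_i(1-p_{i+1})$ and $p_1\cdots p_{s+1}$ (resp.\ $p_1\cdots p_{s+2}$) come out right. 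What your approach buys: it treats all four items uniformly (including (i) and (iv), which the paper omits), it makes the disjointness and completeness of the transition events transparent (the probabilities visibly telescope to $1$), and it handles the residual subcase $N$ ending in $001$ needed for item (i). What the transducer argument buys is brevity: each transition probability is read off a single labelled path without any carry bookkeeping. Both arguments rest on descriptions the paper gives of the same chain, so either is acceptable; just note that your item (i) implicitly uses the exhaustiveness of the digit patterns in (ii)--(iv) together with the $001$ case, which is worth one explicit sentence (no $11$ factor forces $\varepsilon_{2s}=\varepsilon_{2s+1}=0$ at the end of a maximal block $(10)^s$ or $(10)^s1$).
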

\begin{proof}
\np \textbf{(ii)} If $N=\varepsilon_k\ldots\varepsilon_0$ with
$\varepsilon_1\varepsilon_0=00$ then
$(T,(\varepsilon_k/\varepsilon_k,1),T)\ldots
(T,(\varepsilon_2/\varepsilon_2,1),T)(T,(00/$ $01,p_1),I_1)$
is a path in the transducer $\mathcal{T}_{\overline{p}}$. Then
$N$ transitions to $N+1$ with probability $p_1$.

\np $\textbf{(iii)}$ If $N=\varepsilon_k\ldots\varepsilon_t(10)^s$
then $\varepsilon_{t+1}\varepsilon_t=00$, $s\geq 1$ and $t= 2s$.
Therefore,

\np $a)$ $(T,(\varepsilon_k/\varepsilon_k,1),T)\ldots
(T,(\varepsilon_{t+2}/\varepsilon_{t+2},1),T)(T,(00/01,p_{s+1}),I_{s+1})$

$\underbrace{(I_{s+1},(10/00,p_s),I_s)\ldots(I_3,(10/00,p_2),I_2)(I_2,(10/00,p_1),I_1)}_s$ is a path in the transducer $\mathcal{T}_{\overline{p}}$. In this case, $N$ transitions to $N+1$ with probability $p_1\ldots
p_sp_{s+1}$.

\np $b)$
$(T,(\varepsilon_k/\varepsilon_k,1),T)\ldots(T,(\varepsilon_t/\varepsilon_t,1),T)$

$\underbrace{(T,(1/1,1),T)(T,(0/0,1),T) \ldots
(T,(1/1,1),T)(T,(0/0,1),T)}_{2s-2m-2}(T,(10/10,1-p_{m+1}),I_{m+1})$

$(I_{m+1},(10/00,p_m),I_m)\ldots(I_3,(10/00,p_2),I_2)(I_2,(10/00,p_1),I_1)$ is a path in the transducer $\mathcal{T}_{\overline{p}}$. In this case, $N$ transitions to $N-F_{2m}+1$ with probability
$p_1\ldots p_m(1-p_{m+1})$.

\np The cases $(i)$ and $(iv)$ are left to the reader.
\end{proof}

\begin{Proposition}
Let $i,j\in\mathbb{N}^*$ such that $F_n\leq i,j\leq F_{n+1}-1$. Then
the transition probability from $i$ to $j$ is equal to transition
probability from $i-F_n$ to $j-F_n$, i.e. $S_{i,j}=S_{i-F_n,j-F_n}$.
\label{prop432}
\end{Proposition}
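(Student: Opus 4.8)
The idea is to perform the stochastic ``add one'' step for $i$ and for $M:=i-F_n$ on a common probability space, driven by one family $(e_k)_{k\ge 0}$ of independent Bernoulli variables with $\Pr(e_k=1)=p_{k+1}$ (a coupling of the two transitions), and to prove that on the event $\{i+1\in[F_n,F_{n+1}-1]\}$ the coupled value of $M+1$ lies in $[0,F_{n-1}-1]$ and $i+1=(M+1)+F_n$, and conversely. Granting this, for every $j$ with $F_n\le j\le F_{n+1}-1$ the events $\{i+1=j\}$ and $\{M+1=j-F_n\}$ coincide, whence $S_{i,j}=\Pr(i+1=j)=\Pr(M+1=j-F_n)=S_{i-F_n,\,j-F_n}$, which is the assertion. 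First I would record the shape of the representations: since $F_n\le i\le F_{n+1}-1$, the greedy algorithm gives $\varepsilon_n(i)=1$, $\varepsilon_m(i)=0$ for $m>n$ and $\varepsilon_{n-1}(i)=0$ (no two adjacent $1$'s), while $i-F_n=\sum_{m\le n-2}\varepsilon_m(i)F_m\in[0,F_{n-1}-1]$; hence $i$ and $M$ have the same digits at positions $0,\dots,n-1$, differ only at position $n$, and $\varepsilon_0(i)=\varepsilon_0(M)$, so systems $a)$ and $b)$ attach the same parameter $\delta:=1-\varepsilon_0(i)\in\{0,1\}$ to both.

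The key computation is the behaviour of the carries. Unrolling the recursions for $c_\ell$ in $a)$ and $b)$ yields the closed form
$$c_\ell(N+1)=\Big(\prod_{k=0}^{\ell}e_k\Big)\Big(\prod_{k=0}^{\ell}\varepsilon_{2k+\delta}(N)\Big),$$
where for $\delta=0$ the factor $\varepsilon_0(N)=1$ is inert. Applying this to $N=i$ and to $N=M$ and using that their digits agree at positions $\le n-1$, the two carry sequences are equal at every index except possibly at $\ell^{*}:=(n-\delta)/2$, which is an integer precisely when $n\equiv\delta\pmod2$ and is then the unique index with $2\ell^{*}+\delta=n$. \textbf{Case 1: $n\not\equiv\delta\pmod 2$.} Then $n-1$ is of the form $2\ell+\delta$, and $\varepsilon_{n-1}(i)=\varepsilon_{n-1}(M)=0$ forces $c_k(i+1)=c_k(M+1)=0$ for $k\ge\ell$, so the carry sequences coincide everywhere. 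The digit formulae then give $\varepsilon_m(i+1)=\varepsilon_m(M+1)$ for $m\le n-2$, while for $m\ge n-1$ the $2$-block containing position $m$ is entered with carry $0$ and left unchanged; hence $i+1=(M+1)+F_n$ unconditionally, with both terms in their intervals.

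\textbf{Case 2: $n\equiv\delta\pmod 2$.} Here the carries agree for indices $<\ell^{*}$, and $c_k(M+1)=0$ for $k\ge\ell^{*}$ since $\varepsilon_n(M)=0$. On $\{c_{\ell^{*}}(i+1)=1\}$ a direct inspection of $a)$ (resp.\ $b)$) shows the carry of the $i$-computation crosses position $n$, making $\varepsilon_n(i+1)=0$ or $\varepsilon_{n+1}(i+1)=1$, so $i+1\notin[F_n,F_{n+1}-1]$; at the same time $c_{\ell^{*}-1}(M+1)=1$ and $e_{\ell^{*}}=1$ change the block $00$ of $M$ at positions $n,n-1$ into $01$, so $\varepsilon_{n-1}(M+1)=1$ and $M+1\notin[0,F_{n-1}-1]$. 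On $\{c_{\ell^{*}}(i+1)=0\}$ all carries vanish from $\ell^{*}$ on, hence coincide everywhere; the block at positions $n,n-1$ is entered with carry $0$ and copied (so $\varepsilon_n(i+1)=1$, $\varepsilon_{n-1}(i+1)=0$, while $\varepsilon_n(M+1)=\varepsilon_{n-1}(M+1)=0$), and, just as in Case 1, $i+1=(M+1)+F_n$ with both terms in the required intervals. Thus $\{i+1\in[F_n,F_{n+1}-1]\}=\{c_{\ell^{*}}(i+1)=0\}=\{M+1\in[0,F_{n-1}-1]\}$, which establishes the statement of the first paragraph, and with it the proposition.

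The step I expect to be the main obstacle is exactly the bookkeeping of Case 2: one must verify that near position $n$ the carry is genuinely ``all or nothing'' for the property of remaining in $[F_n,F_{n+1}-1]$, and this forces one to keep careful track of how the parity of $n$ versus $\varepsilon_0(i)$ aligns the pairs of digits on which the recursions of $a)$ and $b)$ act; the closed form for $c_\ell$ is what makes this alignment transparent. As a sanity check the same conclusion can be read off Proposition~\ref{prop431}: of the transitions listed there ($N\to N+1$, $N\to N-F_{2m}+1$, $N\to N-F_{2m-1}+1$), only those with $2m\le n-1$, respectively $2m-1\le n-1$, keep $N$ inside $[F_n,F_{n+1}-1]$, and their probabilities involve only the digits of $N$ strictly below position $n-1$, which $i$ and $i-F_n$ have in common.
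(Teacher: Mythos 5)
Your argument is correct, but it follows a genuinely different route from the paper. The paper does not touch the digit/carry algorithm at this point: it invokes Proposition~\ref{prop431} (the explicit list of admissible transitions and their probabilities in terms of the trailing pattern $00$, $00(10)^s$ or $00(10)^s1$ of $i$) and simply matches, case by case, each admissible target $j$ for $i$ with the target $j-F_n$ for $i-F_n$, using that $i$ and $i-F_n$ share all digits below position $n-1$; this is exactly the ``sanity check'' you mention in your last paragraph, promoted to the actual proof. What you propose instead is a pathwise coupling of the two ``add one'' computations of systems $a)$ and $b)$ driven by the same Bernoulli family $(e_k)$, with the closed form $c_\ell=\bigl(\prod_{k\le\ell}e_k\bigr)\bigl(\prod_{k\le\ell}\varepsilon_{2k+\delta}\bigr)$ doing the bookkeeping; your two cases (parity of $n$ versus $\delta$) and the dichotomy on $c_{\ell^*}$ are handled correctly, and the event identity $\{\mathrm{output}_i=j\}=\{\mathrm{output}_{i-F_n}=j-F_n\}$ for $F_n\le j\le F_{n+1}-1$ does give $S_{i,j}=S_{i-F_n,j-F_n}$. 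The coupling is more self-contained (it does not presuppose the classification of Proposition~\ref{prop431}, only the defining recursions) and it explains structurally why the block $[F_n,F_{n+1}-1]$ is left exactly when the carry crosses position $n$; the paper's route is shorter because Proposition~\ref{prop431} has already been paid for. One small imprecision in your Case 1: the $2$-block containing position $n-1$ may well be entered with carry $1$ (it also contains position $n-2$); what saves you is that $n-1$ is then the ``upper'' digit of its pair, whose update rule can only turn a $1$ into a $0$, so $\varepsilon_{n-1}$ stays $0$ and the carry dies there --- worth stating explicitly if you write the proof out in full.
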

\begin{proof} If $i=j$ then by proposition \ref{prop431} we
have that $S_{i,j}=S_{i-F_n,j-F_n}=1-p_1$.

Suppose $i\neq j$ and $S_{i,j}>0$.

$i)$ If $i=\varepsilon_n\ldots\varepsilon_200$ with
$\varepsilon_n=1$ then by the item $(ii)$ of the proposition
\ref{prop431}, $j=i+1=\varepsilon_n\ldots\varepsilon_201$ and
$S_{i,j}=p_1$. Thus $i-F_n=\varepsilon_{n-1}\ldots\varepsilon_200$,
$j-F_n=\varepsilon_{k-1}\ldots\varepsilon_201$ and
$S_{i-F_n,j-F_n}=p_1=S_{i,j}$.

$ii)$ If
$i=\varepsilon_n\ldots\varepsilon_t00\underbrace{1010\ldots1010}_{2s}$,
with $s\geq 1$ then by the item $(iii)$ of the proposition
\ref{prop431}, we have two cases:
$j=\varepsilon_n\ldots\varepsilon_t01\underbrace{0000\ldots0000}_{2s}$
and $S_{i,j}=p_1p_2\ldots p_{s+1}$ or
$j=i-F_{2m}+1=\varepsilon_n\ldots\varepsilon_t00\underbrace{10\ldots10}_{2s-2m}\underbrace{00\ldots00}_{2m}$,
with $1\leq m\leq s$ e $S_{i,j}=p_1p_2\ldots p_m(1-p_{m+1})$. Since
$F_n\leq i,j\leq F_{n+1}-1$ then the Fibonacci expansions of $i$
and $j$ are $i=1c_{n-1}\ldots c_0$ and $j=1d_{n-1}\ldots d_0$, where
$c_l,d_l\in\{0,1\}$, $c_{n-1}=d_{n-1}=0$, $c_{l+1}c_l\neq 11$ and
$d_{l+1}d_l\neq 11$, for all $l\in \{0,\ldots,n-2\}$. Therefore, in
the first case there exists a integer $t\leq n_0\leq n-2$ such that
$\varepsilon_{n_0}=1$ and $\varepsilon_l=0$ for all integer
$n_0<l\leq n-1$. Hence, we have $i-F_n=\varepsilon_{n_0}\ldots
\varepsilon_t00\underbrace{1010\ldots 1010}_{2s}$,
$j-F_n=\varepsilon_{n_0}\ldots \varepsilon_t01\underbrace{0000\ldots
0000}_{2s}$ and $S_{i-F_n,j-F_n}=p_1p_2\ldots p_{s+1}=S_{i,j}$.

\np In the second case there exists a integer $n_0$ such that $n_0=
2s-1$ or $t\leq n_0\leq n-2$ such that $\varepsilon_{n_0}=1$ and
$\varepsilon_l=0$ for all $n_0<l\leq n-1$. Hence, we have
$i-F_n=\varepsilon_{n_0}\ldots \varepsilon_t00\underbrace{1010\ldots
1010}_{2s}$ and $j-F_n=\varepsilon_{n_0}\ldots
\varepsilon_t00\underbrace{10\ldots10}_{2s-2m}\underbrace{00\ldots
00}_{2m}$ (case $t\leq n_0\leq n-2$) or
$i-F_n=00\underbrace{1010\ldots 1010}_{2s-2}$ and
$j-F_n=00\underbrace{10\ldots10}_{2s-2-2m}\underbrace{00\ldots
00}_{2m}$ (case $n_0=2s-1$). Therefore,
$S_{i-F_n,j-F_n}=p_1p_2\ldots p_m(1-p_{m+1})=S_{i,j}$.

$iii)$ Using the same idea used in the proof of the anterior item,
we deduce the case that $N=\varepsilon_n\ldots\varepsilon_t
00\underbrace{1010\ldots1010}_{2s}1$.
\end{proof}

\begin{Proposition}
Let $i,j,n\in\mathbb{N}^*$ such that $0<i<F_n\leq j$. Then
$S_{j,i}=0$. \label{prop433}
\end{Proposition}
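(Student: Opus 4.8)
The plan is to prove the slightly stronger statement from which the proposition follows at once: \emph{if $S_{j,i}>0$ then $i=0$ or $i\geq F_{k(j)}$}, where $k(j)$ is the leading index in the Fibonacci expansion of $j$, so that $F_{k(j)}$ is the largest Fibonacci number $\leq j$ (recall $F_{k(j)}\leq j\leq F_{k(j)+1}-1$, by the identities $F_1+F_3+\cdots+F_{2m-1}=F_{2m}-1$ and $F_0+F_2+\cdots+F_{2m}=F_{2m+1}-1$). Granting this: if $0<i<F_n\leq j$ and $S_{j,i}>0$, then from $i>0$ we get $i\geq F_{k(j)}$, while $F_n\leq j<F_{k(j)+1}$ forces $n\leq k(j)$, hence $F_n\leq F_{k(j)}\leq i$, contradicting $i<F_n$. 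Therefore $S_{j,i}=0$.

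To prove the claim I would read off every transition out of $j$ directly from the transducer $\mathcal{T}_{\overline p}$ (equivalently from the systems $a)$, $b)$ defining the adding machine). Each path from $j$ runs through the ``carry'' states $I_1,I_2,\dots$ and then enters the terminal state $T$, after which all higher digits of $j$ are copied verbatim. The edges out of a state $I_q$ are of four types: ``clear a $1$ and carry on'' (probability $p_q$), ``keep a $1$ and stop'' (probability $1-p_q$), ``let the carry land and stop'' (probability $p_q$), and ``let the carry be absorbed and stop'' (probability $1-p_q$). Hence the carry can only move upward through a maximal block of $1$'s lying at consecutive positions of one fixed parity, it clears an initial segment of these $1$'s, and it must halt at the first $0$ above that block. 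The outcome is $i=j+1$ (the carry lands) or else $i=j-(F_{2q}-1)$ for some $1\leq q\leq s$ when $\varepsilon_0(j)=0$, respectively $i=j-(F_{2q-1}-1)$ for some $1\leq q\leq s+1$ when $\varepsilon_0(j)=1$ — these are the transitions of Proposition \ref{prop431}, together with the two easy extra cases ($\varepsilon_1(j)=0$, $\varepsilon_2(j)=0$) in which only $i\in\{j,j+1\}$ occurs; here $(10)^s$, respectively $(10)^s1$, is the relevant low block, of value $b=F_{2s}-1$, respectively $b=F_{2s+1}-1$.

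Write $j=H+b$, where $b$ is the value of the low block and $H$ is the (possibly zero) contribution of the digits of $j$ strictly above the block, so that $H=0$ or $H\geq F_{2s+2}$ (respectively $H\geq F_{2s+3}$); note that $b$ does not carry into $H$, so $k(j)$ equals the leading index of $H$ when $H\neq 0$. If $i\in\{j,j+1\}$ then $i\geq j\geq F_{k(j)}$. Otherwise $i=H+r$ with $0\leq r< F_{2s}$ (respectively $r<F_{2s+1}$). When $H\neq 0$ there is no carry in $H+r$, so $i$ has leading index $k(j)$ and $i\geq H\geq F_{k(j)}$. When $H=0$ we have $j=b$ and either $r=0$, i.e. the whole block was cleared and $i=0$, or $r>0$, in which case $r=F_{2s}-F_{2q}=F_{2q+1}+\cdots+F_{2s-1}$ (respectively $r=F_{2s+1}-F_{2q-1}=F_{2q}+\cdots+F_{2s}$), whose leading index is again $2s-1$ (respectively $2s$), i.e.\ exactly $k(j)$; thus $i=r\geq F_{k(j)}$. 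In every case $i=0$ or $i\geq F_{k(j)}$, which proves the claim.

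The step requiring genuine care is the middle one: for an arbitrary $j$, checking precisely which positions the carry may clear, and in particular that it never clears the leading digit of $j$ unless $j$ has no digit above the block (so that clearing it empties $j$ and gives $i=0$). This is a finite inspection of $\mathcal{T}_{\overline p}$: the carry enters at position $0$, climbs a maximal parity-run of $1$'s, and is forced to stop at the first $0$ above it — which is the position $2s$ (respectively $2s+1$) used above — and this position is strictly below $k(j)$ precisely when $H\neq 0$.
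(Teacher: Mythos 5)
Your proof is correct and takes essentially the same route as the paper: both rest on the classification of positive-probability transitions in Proposition \ref{prop431} together with the observation that any downward transition out of $j$ keeps all digits of $j$ above the cleared block, so the target is either $0$ or at least the leading Fibonacci term $F_{k(j)}\geq F_n$, which is incompatible with $0<i<F_n\leq j$ (you merely package this as the stronger claim ``$i=0$ or $i\geq F_{k(j)}$'' and treat the $H=0$ edge case explicitly). One harmless slip: when $\varepsilon_0(j)=1$ and $\varepsilon_2(j)=0$ the transition $j\to j-1$ (probability $p_1(1-p_2)$) also occurs, so it is not true that only $i\in\{j,j+1\}$ arises there; but this is exactly the $s=0$, $q=1$ instance of your own formula, and its target $H$ satisfies your claim, so the argument is unaffected.
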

\begin{proof} Let $n\in\mathbb{N}$ such that $0<i<F_n\leq j<F_{n+1}$. Hence,
the expressions of $i$ and $j$ in Fibonacci base are
\begin{equation}
i=\varepsilon_{n-1}'\ldots\varepsilon_0' \textrm{    and    }
j=1\varepsilon_{n-1}\ldots\varepsilon_0, \label{rel41}
\end{equation}
\np where $\varepsilon_l,\varepsilon_l'\in\{0,1\}$,
$\varepsilon_{l+1}\varepsilon_l\neq 11$ and
$\varepsilon_{l+1}'\varepsilon_l'\neq 11$ for all $l\in\{0,\ldots,
n\}$. If $S_{j,i}\neq 0$ then from proposition \ref{prop431} and
the fact that $i<j$, we deduce that
$j=\varepsilon_n\ldots\varepsilon_t00\underbrace{10\ldots 10}_{2s}$,
with $s\geq 1$ and $n\geq t \geq 2s+2$, and
$i=j-F_{2m}+1=\varepsilon_n\ldots\varepsilon_t00\underbrace{10\ldots
10}_{2s-2m}\underbrace{00\ldots 00}_{2m}$, with $1\leq m\leq s$, or
$j=\varepsilon_n\ldots\varepsilon_t00\underbrace{10\ldots10}_{2s-2}1$,
with $s\geq 2$ and $n\geq t\geq 2s+1$, and
$i=j-F_{2m+1}+1=\varepsilon_n\ldots\varepsilon_t00\underbrace{10\ldots
10}_{2s-2m}\underbrace{00\ldots 000}_{2m-1}$, with $1\leq m\leq s$.

In both cases we obtain a contraction because of the relation
(\ref{rel41}).
\end{proof}

\section{Probabilistic properties of the stochastic Fibonacci adding machine}

In the next proposition, we obtain a necessary and sufficient
condition for transience of the Markov chain.

\begin{Proposition}
The Markov chain is transient if only if $\prod_{i=1}^{\infty}p_i>0$. \label{marktrans}
\end{Proposition}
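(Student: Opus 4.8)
The plan is to analyze the return probability to the state $0$, using the self-similar "block" structure of the chain revealed by Proposition~\ref{prop432}: on each block of integers $[F_n,F_{n+1}-1]$ the transition probabilities restricted to that block agree with those on $[0,F_{n-1}-1]$ after translation by $F_n$, and by Proposition~\ref{prop433} the chain, started at $0$, can only leave an initial segment $[0,F_n-1]$ by jumping upward into the next block. First I would restrict attention to the process observed only at the "left endpoints" $F_n$ of blocks, or more precisely track the first hitting time of $[F_n,\infty)$. From $0$ the chain is monotone-ish: to reach level $F_n$ it must pass through $F_{n-1}$, and the one-step probability of jumping from the ``top'' of block $n-1$ up to $F_n$ is governed by a product $p_1p_2\cdots p_{k}$ as in Proposition~\ref{prop431}(iii)--(iv), while at every intermediate step there is a uniformly positive chance ($1-p_1$, staying put, or a downward jump) of falling back. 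The key computation is to show that the probability $q_n$ of ever reaching level $F_n$ starting from $0$ satisfies a recursion of the form $q_{n} = q_{n-1}\cdot r_{n}$ where $r_n$ is (up to bounded factors) the probability, starting from inside block $n-1$ (a translated copy of $[0,F_{n-1}-1]$), of hitting its top and then taking the upward edge — i.e. $r_n$ is comparable to $p_n$ times a quantity bounded away from $0$ and $1$.

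Carrying this out, the chain is transient iff $q_\infty := \lim_n q_n = \mathbb{P}_0(\text{never return to }0\text{ after leaving, escaping to }\infty) > 0$ — more carefully, transience of the irreducible chain is equivalent to $\mathbb{P}_0(\text{the walk reaches arbitrarily large states}) > 0$, and because of Proposition~\ref{prop433} reaching a state $\geq F_n$ forces having climbed through all lower blocks, so the escape probability is exactly $\prod_{n} r_n$ (a decreasing product of numbers in $(0,1)$). The next step is to pin down $r_n$: by Proposition~\ref{prop431} the only way to go from block $n-1$ upward is from a configuration of the form $\dots 00(10)^s$ or $\dots 00(10)^s1$ whose successor is $F_n$ (or involves the digit $F_{n-1}$), and the relevant success probability is a product $p_1\cdots p_{s+1}$ or $p_1\cdots p_{s+2}$; combined with the recurrence/transience of the \emph{sub}-chain on the translated block (which by induction has escape probability $q_{n-1}$), one shows $r_n = q_{n-1}^{-1}q_n$ is, up to factors in a fixed interval $[c, C]\subset(0,\infty)$ independent of $n$, equal to $p_n$. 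Hence $\prod_n r_n > 0 \iff \prod_n p_n > 0$, and the chain is transient iff $\prod_{i=1}^\infty p_i > 0$.

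Concretely the steps are: (1) formalize ``escape probability $=$ transience'' for this irreducible aperiodic chain via the standard criterion ($\sum_n \mathbb{P}_0(X_n=0)<\infty$, or equivalently $\mathbb{P}_0(\text{return})<1$); (2) use Proposition~\ref{prop433} to show escape $= \bigcap_n\{\text{hit }[F_n,\infty)\}$ and that this is a \emph{nested} sequence of events, so its probability is $\prod_n \mathbb{P}(\text{hit block }n \mid \text{hit block }n-1)$; (3) within block $n-1 \cong [0,F_{n-1}-1]$ (Proposition~\ref{prop432}), identify the conditional hitting event as ``first return-or-escape of the induced copy, then the specific upward edge,'' and bound its probability above and below by $p_n$ times constants depending only on $p_1$ (which appear through the $1-p_1$ self-loop and the first factors of the products in Proposition~\ref{prop431}); (4) conclude by comparing the infinite products.

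**Main obstacle.** The delicate point is step~(3): controlling the geometry of how the walk moves \emph{within} a block before it either falls back to $0$ or jumps up, since the sojourn in block $n-1$ is itself a complicated copy of the whole chain and the ``upward-jump'' configurations $00(10)^s$, $00(10)^s 1$ are only reached along specific subpaths. One must check that the correction factors relating $r_n$ to $p_n$ are genuinely bounded above \emph{and below} uniformly in $n$ — the lower bound (that the walk has a not-too-small chance of being positioned to take the upward edge rather than perpetually returning to $0$) is where the estimate could fail and needs the self-similar recursion together with a uniform lower bound coming from the fact that all the relevant one-step probabilities are products of the $p_i$'s, each at most $1$, with at least one factor $p_1$ bounded below by the fixed constant $p_1>0$; an induction on $n$ keeping track of the ratio $q_n/\big(q_{n-1}\prod_{i\le n}p_i\big)$ staying in a compact subinterval of $(0,\infty)$ should close this.
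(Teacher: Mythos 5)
The decisive step of your plan, (3)--(4), has a genuine gap. You claim that the conditional block-climbing probability $r_n$ equals $p_n$ \emph{up to multiplicative factors lying in a fixed interval} $[c,C]\subset(0,\infty)$, and you then conclude $\prod_n r_n>0\iff\prod_n p_n>0$. That inference is invalid: convergence of an infinite product is not preserved under uniformly bounded multiplicative errors. If, say, $r_n=\tfrac12 p_n$ for all $n$, then $\prod_n r_n=0$ no matter how fast $p_n\to1$; conversely a bound $r_n\le Cp_n$ with $C>1$ gives no conclusion when the $p_i$ are bounded away from $0$ and $1$ (e.g.\ $p_i\equiv 0.9$, where $\prod p_i=0$ and you must still prove recurrence). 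To compare the products you need the correction factors $c_n=r_n/p_{j(n)}$ to satisfy $\sum_n|1-c_n|<\infty$, i.e.\ essentially an exact determination of $r_n$ --- and this exact determination is precisely what the paper supplies by a different route: it characterizes transience through bounded solutions of $\tilde S v=v$ (Lemma \ref{lemaclassico}) and then solves that equation exactly (Lemma \ref{lemamark}), showing every bounded solution is $v_n=\beta(n)v_1$ with $\beta$ as in Definition \ref{beta}, built from the factors $1/p_i^2$. Translated into your hitting-probability language (which is legitimate here, since by Proposition \ref{prop431} the only upward moves are $+1$, so the chain enters $[F_n,\infty)$ at $F_n$), the same computation shows that $m\mapsto\mathbb{P}_m(T_{F_n}<T_0)$ is proportional to $\beta$ on $\{1,\dots,F_n\}$, whence $\mathbb{P}_1(T_{F_n}<T_0)=1/\Pi_n$ and $r_n=\Pi_{n-1}/\Pi_n=p_{\left[\frac{n+1}{2}\right]+1}$ \emph{exactly}: each $p_j$ occurs twice along the climb (your indexing ``$r_n\approx p_n$'' is also off by this factor of two in the index), and the escape probability behaves like $\prod_{i\ge2}p_i^{2}$. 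Without this exact identification (or correction factors with convergent product), your argument cannot close in either direction, and this is exactly the ``main obstacle'' you flagged but did not resolve.

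A secondary but real slip is in steps (1)--(2): transience is \emph{not} equivalent to ``$\mathbb{P}_0(\text{the walk reaches arbitrarily large states})>0$''. In the recurrent case the irreducible chain visits every state almost surely, so each event $\{\text{hit }[F_n,\infty)\}$, and their intersection, has probability $1$ as well; your identity ``escape $=\bigcap_n\{\text{hit }[F_n,\infty)\}$'' is therefore false. The correct reduction is to the nested events $\{T_{F_n}<T_0\}$ (reach level $F_n$ \emph{before returning to} $0$), with transience equivalent to positivity of $\lim_n\mathbb{P}_1(T_{F_n}<T_0)$; after that correction the strong-Markov telescoping you propose is sound, again because upward jumps are only $+1$. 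Your later wording (``first return-or-escape'') suggests you meant this, but as written the decomposition underlying the whole product formula is not the right one.
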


For the proof of the proposition \ref{marktrans}, we need the
following definition and lemma:

\begin{Lemma}\cite{o}
An irreducible and aperiodic Markov chain is transient if only if there exists a sequence
$v=(v_i)_{i\geq 1}$ such that $0<v_i\leq 1$ and $v_i=\sum_{j=1}^{+\infty}S_{i,j}v_j$ for all $i\geq 1$, i.e. $\tilde{S}v=v$ where $\tilde{S}$ is
obtained from $S$ by removing its first line and column. \label{lemaclassico}
\end{Lemma}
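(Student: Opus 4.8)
The plan is to prove both implications through the single nonnegative ``harmonic'' function
\[
v^*_i := P_i\big(X_n \neq 0 \text{ for all } n \ge 0\big) = P_i(\tau = \infty), \qquad i \ge 1,
\]
where $(X_n)_{n\ge 0}$ is the chain with transition matrix $S$, $\tau := \inf\{n\ge 0 : X_n = 0\}$, and $P_i$ (with expectation $E_i$) denotes the law started at $i$. Plainly $0 \le v^*_i \le 1$. First I would record, by conditioning on the first step and the Markov property, that for every $i\ge 1$
\[
v^*_i = \sum_{j\ge 0} S_{i,j}\,P_j(\tau=\infty) = \sum_{j\ge 1} S_{i,j}\,v^*_j,
\]
since $P_0(\tau=\infty)=0$. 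Thus $v^*$ already solves $\tilde S v^* = v^*$ with values in $[0,1]$, and the whole statement reduces to relating transience to the positivity of $v^*$. Throughout I will use the first-step identity at $0$, namely $1-f_{00}=\sum_{j\ge 1}S_{0,j}\,v^*_j$, where $f_{00}=P_0(\exists\,n\ge 1:X_n=0)$.

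For the implication ``existence of $v$ $\Rightarrow$ transience'', suppose $0<v_i\le 1$ with $\tilde S v = v$, and extend it by $v_0:=0$. Because $\sum_{j\ge 1}S_{i,j}v_j=\sum_{j\ge 0}S_{i,j}v_j$ for $i\ge 1$ (the $j=0$ term vanishes), the stopped sequence $M_n:=v_{X_{n\wedge\tau}}$ is, under each $P_i$, a martingale bounded in $[0,1]$. Martingale convergence gives $v_i=E_i[M_\infty]$; on $\{\tau<\infty\}$ one has $M_\infty=v_0=0$, whence $v_i=E_i[\,\mathbf 1_{\{\tau=\infty\}}\lim_n v_{X_n}]\le P_i(\tau=\infty)$. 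So $v_i>0$ forces $P_i(\tau=\infty)>0$ for every $i\ge 1$. Choosing any $j\ge 1$ with $S_{0,j}>0$ (such $j$ exists since the chain is irreducible and $0$ is not absorbing) and inserting it into the first-step identity yields $1-f_{00}\ge S_{0,j}\,P_j(\tau=\infty)>0$, i.e. $f_{00}<1$; as transience is a class property, every state is then transient. Note that it is precisely the strict positivity of $v$ that is consumed here.

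For the converse, transience means $f_{00}<1$, so the first-step identity $1-f_{00}=\sum_{j\ge 1}S_{0,j}\,v^*_j$ immediately gives $v^*\not\equiv 0$; hence $v:=v^*$ is a nonzero bounded solution of $\tilde S v = v$ in $[0,1]$. The remaining task, upgrading $v^*\not\equiv 0$ to $v^*_i>0$ for \emph{all} $i$, is the step I expect to be the main obstacle, because irreducibility alone does not forbid a nonempty set $\{i:v^*_i=0\}$ of states from which $0$ is reached almost surely (such a set can persist precisely because the hub state $0$ lets the chain leave it again). The way I would close this is to supply a uniform escape mechanism from the transition structure at hand: one shows that from every state the adding machine can keep carrying and incrementing, reaching arbitrarily large integers without ever returning to $0$, so that $P_i(\tau=\infty)>0$ and thus $v^*_i>0$ for every $i\ge 1$. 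I would therefore phrase the positivity argument around this ``drift to infinity'' property, flagging that at the purely abstract level the clause $0<v_i$ must be read together with the communication structure ensuring no state returns to $0$ with full probability; with that input the maximal solution $v^*$ is the required strictly positive sequence.
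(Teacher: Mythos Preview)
The paper does not prove this lemma; it is quoted from \cite{o}, and the only remark added is that in the transient case one may take $v_m$ to be the probability, starting from $m$, of never visiting $0$ (with pointers to \cite{la} and \cite{o}). Your write-up therefore already goes beyond what the paper supplies, and your choice of $v^*_i=P_i(\tau=\infty)$ is exactly the solution the paper names.

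Your forward implication (a strictly positive bounded $v$ with $\tilde S v=v$ forces transience) via the bounded martingale $v_{X_{n\wedge\tau}}$ is correct and standard. For the converse you correctly isolate the genuine difficulty: upgrading $v^*\not\equiv 0$ to $v^*_i>0$ for every $i$. Your caution is fully warranted, because the lemma, read literally with $0<v_i$ for all $i\ge 1$, is \emph{false} for general irreducible aperiodic chains. A concrete obstruction: on $\mathbb N$ take $S_{1,0}=1$, $S_{0,1}=S_{0,2}=\tfrac12$, and $S_{n,n+1}=p>\tfrac12$, $S_{n,n-1}=1-p$ for $n\ge 2$; this chain is irreducible, aperiodic and transient, yet row $1$ of $\tilde S$ vanishes identically, so every solution of $\tilde S v=v$ has $v_1=0$. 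The classical result in the cited references only asserts a bounded \emph{non-zero} nonnegative solution, which is precisely what your $v^*$ delivers.

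This does not damage the paper's application. In the proof of Proposition~\ref{marktrans} one only needs a bounded non-zero solution: Lemma~\ref{lemamark} then gives $v_n=\beta(n)v_1$, so $v\not\equiv 0$ already forces $v_1\ne 0$, which is all that is used. Your plan to rescue strict positivity through the adding machine's escape-to-infinity mechanism is thus unnecessary here, and would in any case turn the abstract lemma into a statement about this specific chain. The clean repair is to replace ``$0<v_i\le 1$'' by ``$0\le v_i\le 1$ with $v\not\equiv 0$''; with that amendment your argument is complete.
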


Indeed, in the transient case a solution is obtained by taking $v_m$ as the
probability that $0$ is never visited by the Markov chain given that
the chain starts at state $m$, see the discussion on pp. $42-43$,
Chap. $2$ in \cite{la} and also Corollary $16.48$ in \cite{o}.

\begin{Definition}
Let $\beta:\mathbb{N}^*\longrightarrow\mathbb{R_+}$ be the map defined
by $\beta(n)=\Pi_r$ if $F_r\leq n< F_{r+1}$, for all $r\geq 0$,
where $\Pi_0=1$, $\Pi_1=\frac{1}{p_2}$ and
$\Pi_r:=\displaystyle\prod_{i=2}^{\left[\frac{r}{2}\right]+1}\dfrac{1}{p_i^2}\dfrac{1}{p^{
\left[\frac{r+1}{2}\right]-\left[\frac{r}{2}
\right]}_{\left[\frac{r}{2}\right]+2}}$,
for all $r\geq 2$, i.e.

\begin{center}
$\Pi_{2n}=\displaystyle\prod_{i=2}^{n+1}\frac{1}{p_i^2}=\frac{1}{p_2^2p_3^2\ldots
p_{n+1}^2}$ and
$\Pi_{2n+1}=\frac{1}{p_{n+2}}\displaystyle\prod_{i=2}^{n+1}\frac{1}{p_i^2}=\frac{1}{p_2^2p_3^2\ldots
p_{n+1}^2} \frac{1}{p_{n+2}}$, for all $n\geq 1$.
\end{center}
\label{beta}
\end{Definition}

\begin{Lemma}
Let $v=(v_i)_{i\geq 1}\in l^\infty(\mathbb{N})$ be a bounded sequence.
Then $\tilde{S}v=v$ if only if
$v_n=\beta(n)v_1$ for all $n\geq 1$, where $\tilde{S}$ is the
matrix obtained from $S$ by removing its first line and column.
\label{lemamark}
\end{Lemma}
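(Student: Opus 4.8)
The plan is to prove the "if and only if" by first showing that the recursion $\tilde{S}v = v$ forces the values $v_n$ to be determined by $v_1$ via an explicit rule, and then checking that this rule is exactly $v_n = \beta(n)v_1$. The key structural input is the block/self-similar structure of $S$ recorded in Propositions \ref{prop431}, \ref{prop432} and \ref{prop433}: from a state $N$ with $F_r \le N < F_{r+1}$, the chain either stays at $N$ (probability $1-p_1$), moves forward to $N+1$, or moves back to some $N - F_{2m}+1$ (resp. $N - F_{2m-1}+1$); and by Proposition \ref{prop433} no state $\ge F_n$ can jump below $F_n$. Write out the equation $v_N = \sum_j S_{N,j} v_j$ for a generic $N$ in the two cases of Proposition \ref{prop431}$(iii)$--$(iv)$. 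After cancelling the $(1-p_1)v_N$ term from both sides (using $1 = (1-p_1) + \sum_{j\ne N} S_{N,j}$, i.e. that each row sums to $1$), one is left with a linear relation expressing $v_{N+1}$ in terms of $v_N$ and the finitely many "backward" values $v_{N-F_{2m}+1}$ with strictly smaller index. This lets one solve for $v_{N+1}$ by induction on $N$ once $v_1$ is fixed, so the solution space of $\tilde S v = v$ among bounded sequences is at most one-dimensional, spanned by whatever sequence starts with $v_1 = 1$.

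The heart of the argument is then to identify that one-dimensional space with $(\beta(n))_{n\ge 1}$. I would proceed by induction on $r$, proving simultaneously: (a) $v_n = \Pi_r v_1$ is \emph{constant} on each Fibonacci block $F_r \le n < F_{r+1}$, and (b) the value $\Pi_r$ satisfies the two-term recursion that the block structure dictates, namely the relation linking $\Pi_{r}$, $\Pi_{r-1}$, $\Pi_{r-2}$ coming from comparing the equation at a state at the top of block $r-2$ (which moves forward into block $r-1$) with the backward transitions landing in earlier blocks. Concretely, the forward transition $N \mapsto N+1$ across a block boundary carries a probability $p_1p_2\cdots p_{s+1}$ while the backward transitions carry $p_1\cdots p_m(1-p_{m+1})$; feeding the inductive hypothesis (that $v$ is block-constant with value $\Pi_{r'}$ on block $r'$) into $v_N = \sum_j S_{N,j}v_j$ and telescoping the resulting sum over $m$ should collapse everything to the single relation
\[
\Pi_{2n+1} = \frac{1}{p_{n+2}}\Pi_{2n}, \qquad \Pi_{2n+2} = \frac{1}{p_{n+2}^2}\,p_{n+2}\cdot\frac{1}{\,\cdot\,}\Pi_{2n}\ \text{-type identity},
\]
which is precisely the content of Definition \ref{beta}. (The displayed formulas $\Pi_{2n} = \prod_{i=2}^{n+1} p_i^{-2}$, $\Pi_{2n+1} = p_{n+2}^{-1}\prod_{i=2}^{n+1}p_i^{-2}$ are exactly what these recursions produce from $\Pi_0 = 1$, $\Pi_1 = 1/p_2$.) For the base cases $r = 0,1,2$ one just reads off $v_1$, $v_2$, $v_3$, $v_4$ directly from the first rows of the matrix in Table \ref{matrizspi}: e.g. the equation at state $1$ reads $v_1 = (1-p_1)v_1 + p_1p_2 v_2$ giving $v_2 = v_1/p_2 = \Pi_1 v_1$, and similarly for $v_3, v_4$.

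For the converse direction one simply verifies that the explicit sequence $v_n = \beta(n)v_1$ satisfies $\tilde S v = v$ row by row; but this is automatic once the inductive computation above is organized as a genuine equivalence — the same telescoping identity that forces the $\Pi_r$ recursion also shows the candidate sequence solves every equation. Finally, boundedness of $v$ is only used to legitimize the manipulation of the (finite, by Proposition \ref{prop431}) sums and to make the "at most one-dimensional" conclusion meaningful; note $\beta$ itself need not be bounded, so the statement should be read as: the \emph{only} bounded solutions are the scalar multiples of $\beta$ that happen to be bounded, i.e. if $\beta$ is unbounded then $v\equiv 0$ (equivalently $v_1 = 0$). The main obstacle I anticipate is purely bookkeeping: correctly enumerating, for a state $N$ sitting at an arbitrary position inside block $r$, exactly which earlier blocks its backward jumps $N - F_{2m}+1$ land in, and checking the telescoping of $\sum_m p_1\cdots p_m(1-p_{m+1})\Pi_{(\text{block of }m)}$ against the forward term $p_1\cdots p_{s+1}\Pi_{r+1}$ — getting the index arithmetic $\left[\frac{r}{2}\right]$, $\left[\frac{r+1}{2}\right]$ exactly right is where care is needed, and it is cleanest to separate the even-$r$ and odd-$r$ cases throughout, mirroring the split between systems \eqref{rev1} and \eqref{rev2}.
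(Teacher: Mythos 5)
Your overall strategy coincides with the paper's: induction on the state index, block-constancy of $v$ on each Fibonacci block $[F_r,F_{r+1})$, a recursion for the block values at the block boundaries, and base cases read off the first rows of the matrix. However, the plan as written has a gap at exactly the step that produces the nonconstant $\beta$. You rely on two structural facts: that each row sums to $1$, and that (by Proposition \ref{prop433}) no state $\geq F_n$ can jump below $F_n$. The second is not what Proposition \ref{prop433} says: it only excludes jumps to states $i$ with $0<i<F_n$, and the boundary states $N=F_r-1$ \emph{do} jump to the state $0$, with probability $p_1\cdots p_{l+1}(1-p_{l+2})$ where $r\in\{2l+1,2l+2\}$. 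Since $\tilde{S}$ is obtained by deleting the first row and column, this mass simply disappears: the row of $\tilde{S}$ indexed by $F_r-1$ sums to $1-p_1\cdots p_{l+1}(1-p_{l+2})<1$. If at such a row you apply ``rows sum to one'' together with block-constancy --- which is what your telescoping step describes --- the backward terms cancel the forward term exactly and you obtain $v_{F_r}=v_{F_r-1}$, i.e.\ $v$ constant and $\beta\equiv 1$, contradicting Definition \ref{beta}. The mechanism your plan never names is precisely this deficiency of the boundary rows of $\tilde{S}$: at $k=F_r-1$ all backward targets with positive index lie in block $r-1$ (so carry the value $\Pi_{r-1}v_1$ by the inductive hypothesis), the jump to $0$ contributes nothing, and the telescoped sum $\sum_{m=1}^{l}p_1\cdots p_m(1-p_{m+1})=p_1-p_1\cdots p_{l+1}$ leaves the residue $p_1\cdots p_{l+1}\Pi_{r-1}v_1=p_1\cdots p_{l+2}\,v_{F_r}$, hence $v_{F_r}=\Pi_{r-1}v_1/p_{l+2}=\Pi_r v_1$. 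The row-sum shortcut is legitimate only at interior rows $F_r\leq k<F_{r+1}-1$, where it correctly gives $v_{k+1}=v_k$; this is the paper's second case.

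Once the boundary computation is made explicit in this way, your remaining points are minor: the displayed recursion for $\Pi_{2n+2}$ is garbled (it should be $\Pi_{2n+2}=\Pi_{2n+1}/p_{n+2}=\Pi_{2n}/p_{n+2}^2$), backward jumps from a boundary state do not spread over several earlier blocks but land only in block $r-1$ (or at $0$), and your reading of the boundedness hypothesis --- only the scalar multiples of $\beta$ that happen to be bounded occur, so $v\equiv 0$ when $\beta$ is unbounded --- is consistent with how the lemma is used in the transience proof.
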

\begin{proof}
Assume that $\tilde{S}v=v$. Since  $S_{i,i+k}=0$ for all integer $i\geq 0$ and $k\geq2$,
(see the proposition \ref{prop431}) then the
operator $\tilde{S}$ satisfies $\tilde{S}_{i,i+k}=S_{i,i+k}=0$, for
all integers $i\geq 1$ and $k\geq 2$. Therefore, it is
possible to prove by induction that for each integer
$n\geq 1$, there exists $\alpha_n\in\mathbb{R}$ such that
$v_n=\alpha_n v_1$.

\np \textbf{Claim:} $\alpha_n=\beta(n)$, for all integer $n\geq 1$.

Indeed, since $(\tilde{S}-I)v=0$, it's not hard to see that 
$\alpha_1=1=\beta(1)$, $\alpha_2=\frac{1}{p_2}=\beta(2)$
and $\alpha_3=\alpha_4=\frac{1}{p_2^2}=\beta(3)=\beta(4)$.

Let $k\in\mathbb{N}$, $k\geq 5$, and suppose that
$\alpha_n=\beta(n)$, for all $n\in\{1,\ldots,k\}$.

To prove $\alpha_{k+1}=\beta(k+1)$, we need to consider two
cases:

\np \textbf{(i)} $k+1= F_r$ for some $r\geq 4$;

\np In this case we can have $r=2l+1$ or $r=2l+2$, for some $l\geq
1$. In both cases we have

$$
\left\{
\begin{array}{l}
S_{k,k+1}=p_1p_2\ldots p_{l+1}p_{l+2}; \\
S_{k,0}=p_1p_2\ldots p_{l+1}(1-p_{l+2}); \\
S_{k,\sum_{i=0}^jF_{r-1-2i}}= p_1p_2\ldots p_{l-j}(1-p_{l-j+1}), \textrm{ for all } j=0\ldots,l-1; \\
S_{k,k}=1-p_1. \\
\end{array}
\right.
$$

\np Since $(\tilde{S}-Iv)_k=0$, we have
\begin{equation}
p_1p_2\ldots p_{l+1}p_{l+2}v_{k+1}+
\displaystyle\sum_{j=0}^{l-1}p_1p_2\ldots
p_{l-j}(1-p_{l-j+1})v_{\sum_{i=0}^jF_{r-1-2i}}+(1-p_1)v_k=v_k.
\label{kfr}
\end{equation}
\np Thus, since
$F_{r-1}\leq\sum_{i=0}^jF_{r-1-2i}\leq \sum_{i=0}^{l-1}F_{r-1-2i}<F_r=k+1$, for all $j\in\{0,\ldots,l-1\}$ and
$\alpha_n=\beta(n)$, for all $n\in\{1,\ldots, k\}$ then $v_{\sum_{i=0}^jF_{r-1-2i}}=v_k=\Pi_{r-1}v_1$, for all
$j=0,\ldots,l-1$.

\np Therefore, from relation (\ref{kfr}), we obtained

\begin{center}
$p_1p_2\ldots p_{l+1}p_{l+2}v_{k+1}+
\displaystyle\sum_{j=0}^{l-1}p_1p_2\ldots
p_{l-j}(1-p_{l-j+1})\Pi_{r-1}v_1+(1-p_1)\Pi_{r-1}v_1=\Pi_{r-1}v_1$.
\end{center}

Then it implies that $p_1p_2\ldots p_{l+1}p_{l+2}v_{k+1}= p_1p_2\ldots
p_{l+1}\Pi_{r-1}v_1$ i.e.

\begin{center}
$v_{k+1}=\dfrac{1}{p_{l+2}}\Pi_{r-1}v_1=\dfrac{1}{p_{l+2}}
\displaystyle\prod_{i=2}^{\left[\frac{r-1}{2}\right]+1}\dfrac{1}{p_i^2}\dfrac{1}{p^{
\left[\frac{r-1+1}{2}\right]-\left[\frac{r-1}{2}\right]}_{\left[\frac{r-1}{2}\right]+2}}
v_1$.
\end{center}

If $r=2l+1$ then $v_{k+1}=
\displaystyle \prod_{i=2}^{l+1}\dfrac{1}{p_i^2}\dfrac{1}{p_{l+2}}v_1=\Pi_rv_1$ i.e. $\alpha_{k+1}=\beta(k+1)$.

If $r=2l+2$ then $v_{k+1}=
\displaystyle\prod_{i=2}^{l+2}\dfrac{1}{p_i^2}v_1=\Pi_rv_1$ i.e. $\alpha_{k+1}=\beta(k+1)$.

\np \textbf{(ii)} $F_r< k+1< F_{r+1}$, for some $r\geq 3$;

\np In this case, we have that $F_r\leq k< F_{r+1}-1$. Then by proposition \ref{prop433} it implies
that $S_{k,i}=0$, for all $i\in\{0,\ldots,F_r-1\}$. Thus, we have that

\begin{center}
$1=\sum_{i=0}^{k+1}S_{k,i}=\sum_{i=F_r}^{k+1}S_{k,i}=\sum_{i=F_r}^{k+1}\tilde{S}_{k,i}$ and $1-S_{k,k+1}=\sum_{i=F_r}^k\tilde{S}_{k,i}$.
\end{center}

\np Therefore, since $(\tilde{S}-I)v=0$ and
$\alpha_n=\beta(n)=\Pi_r$, for all $n\in\{F_r,\ldots, k\}$, we have

\begin{center}
$\Pi_rv_1= v_k= \sum_{i=1}^{k+1}\tilde{S}_{k,i}v_i=
\sum_{i=F_r}^{k}\tilde{S}_{k,i}\Pi_rv_1+\tilde{S}_{k,k+1}v_{k+1}=(1-\tilde{S}_{k,k+1})\Pi_rv_1+\tilde{S}_{k,k+1}v_{k+1}$.
\end{center}

Thus, it follows that
$\tilde{S}_{k,k+1}v_{k+1}=\tilde{S}_{k,k+1}\Pi_rv_1$, i.e. $\alpha_{k+1}=\Pi_r=\beta(k+1)$.
\end{proof}

\np \textbf{Proof of the proposition \ref{marktrans}:} 
Suppose that the Markov chain is transient. Then there exists $v=(v_i)_{i\geq 1}$
satisfying the conditions of the lemma \ref{lemaclassico}. By lemma \ref{lemamark}, we have
that $v_n=\beta(n)v_1$, for all $n\geq 1$. Thus, we need to have $\prod_{i=1}^{+\infty}p_i>0$, because otherwise we should have
$v_n=\beta(n)v_1$ goes to infinity, contradicting the
fact that $|v_i|\leq 1$ for all $i\geq 1$.
\rightline{$\Box$}

\begin{Proposition}
Let $\mu=(\mu_i)_{i\geq 0}$ be a sequence on $\mathbb{C}$.
Then $\mu S=\mu$ if only if there exists a sequence $(\xi_i)_{i\geq 1}=(\xi_i(\overline{p}))_{i\geq 1}$ such that $\mu_i=\xi_i\mu_0$, for all integer $i\geq
1$, where $\xi_1=1$, $\xi_2=p_2$ and $\xi_{F_n+k}=\xi_{F_n}\xi_k$, for all integer $n\geq 2$ and
$k\in\{1,\ldots,F_{n-1}-1\}$. Furthermore, $\xi_{F_n}=p_{\left[\frac{n+1}{2}\right]+1}$ and
$\xi_{F_n-1}=p_2p_3\ldots
p_{\left[\frac{n}{2}\right]+1}$. \label{marknula}
\end{Proposition}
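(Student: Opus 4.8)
The plan is to turn $\mu S=\mu$ into its system of column equations, $\mu_j=\sum_{i\ge0}\mu_iS_{i,j}$, and to exploit the block structure of $S$ coming from Propositions \ref{prop431}, \ref{prop432} and \ref{prop433}. Three facts are used throughout: $(a)$ the only increasing transition out of a state $i$ is $i\to i+1$, so $S_{i,j}=0$ whenever $j>i+1$; $(b)$ by Proposition \ref{prop433}, from a state in $[F_n,F_{n+1})$ there is no transition into a strictly lower block except (possibly) into the state $0$; $(c)$ by Proposition \ref{prop432} each block is self-similar, $S_{i,j}=S_{i-F_n,j-F_n}$ for $F_n\le i,j<F_{n+1}$, so the internal dynamics on $[F_n,F_{n+1})$ is a copy of $S^{(n-1)}$, where $S^{(m)}:=(S_{a,b})_{0\le a,b<F_m}$. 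Writing $\nu^{(n)}:=(\mu_{F_n},\dots,\mu_{F_{n+1}-1})$ and $e_j$ for the $j$-th standard basis vector, $(a)$--$(c)$ make the column equations indexed by $[F_n,F_{n+1})$ collapse to
$$\nu^{(n)}\bigl(I-S^{(n-1)}\bigr)=c_n\,e_0,\qquad c_n:=\mu_{F_n-1}\,S_{F_n-1,F_n},$$
and those indexed by $1,\dots,F_{n-1}-1$ to $(\mu_0,\dots,\mu_{F_{n-1}-1})\bigl(I-S^{(n-1)}\bigr)=d_{n-1}\,e_0$, where $d_{n-1}$ denotes the $0$-th coordinate of the left-hand side. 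The cases $j=1$ and $j=2$ are checked directly and give $\mu_1=\mu_0$, $\mu_2=p_2\mu_0$.

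The analytic core is that $I-S^{(m)}$ is invertible for every $m$: $S^{(m)}$ is sub-stochastic, its only row not summing to $1$ being that of index $F_m-1$, and from any state of $\{0,\dots,F_m-1\}$ the state $F_m-1$ is reached by iterating $i\to i+1$ (a transition of positive probability since all $p_i>0$), so $(S^{(m)})^k\to0$ and $I-S^{(m)}$ is invertible. Hence $\{x:x(I-S^{(m)})\in\mathbb C e_0\}$ is one-dimensional. Comparing the two displayed relations, $\nu^{(n)}$ and $(\mu_0,\dots,\mu_{F_{n-1}-1})$ are proportional whenever $\mu_0\ne0$ (and $\mu\equiv0$ when $\mu_0=0$, applying the invertibility block by block); equating $0$-th coordinates gives $\mu_{F_n+k}=(\mu_{F_n}/\mu_0)\mu_k$ for $0\le k\le F_{n-1}-1$. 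Setting $\xi_i:=\mu_i/\mu_0$, this is exactly $\xi_{F_n+k}=\xi_{F_n}\xi_k$ together with $\xi_1=1$, $\xi_2=p_2$; this proves the forward implication up to the explicit values of $\xi_{F_n}$ and $\xi_{F_n-1}$.

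To compute those, pair $\nu^{(n)}(I-S^{(n-1)})=c_ne_0$ on the right with the all-ones vector $\mathbf 1$: since $S$ is stochastic and only row $F_{n-1}-1$ of $S^{(n-1)}$ leaks, $(I-S^{(n-1)})\mathbf 1=S_{F_{n-1}-1,F_{n-1}}\,e_{F_{n-1}-1}$, so $c_n=\mu_{F_{n+1}-1}\,S_{F_{n-1}-1,F_{n-1}}$; the same computation on the other relation gives $d_{n-1}=\mu_{F_{n-1}-1}\,S_{F_{n-1}-1,F_{n-1}}$. Therefore $\xi_{F_n}=c_n/d_{n-1}=\mu_{F_n-1}S_{F_n-1,F_n}\big/\bigl(\mu_{F_{n-1}-1}S_{F_{n-1}-1,F_{n-1}}\bigr)$. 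Now read the transition probabilities off Proposition \ref{prop431}: using the Fibonacci expansions $F_{2m}-1=(10)^m$ and $F_{2m+1}-1=(10)^m1$ one gets $S_{F_r-1,F_r}=p_1p_2\cdots p_{\left[\frac{r+3}{2}\right]}$. Substituting this and arguing by induction on $n$ yields $\xi_{F_n}=p_{\left[\frac{n+1}{2}\right]+1}$; the formula $\xi_{F_n-1}=p_2p_3\cdots p_{\left[\frac n2\right]+1}$ then follows by induction from the multiplicative relation applied to $F_n-1=F_{n-1}+(F_{n-2}-1)$ and the value of $\xi_{F_{n-1}}$. The base cases $n\le2$ are immediate.

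For the converse, let $\xi$ be the sequence determined by the relations just established, put $\mu:=\mu_0\xi$, and verify every column equation. The equations of index $F_n+k$ with $1\le k\le F_{n-1}-1$ reduce, via $(c)$ and induction on $n$, to the equation of index $k$; the boundary equations of index $F_n$ reduce, after the pairing with $\mathbf 1$ as above, to the identity $\xi_{F_{n+1}-1}S_{F_{n-1}-1,F_{n-1}}=\xi_{F_n-1}S_{F_n-1,F_n}$, which holds by the explicit formulas; and the equation of index $0$ amounts to $\sum_{r\ge1}\xi_{F_r-1}S_{F_r-1,0}=p_1$. I expect the explicit computations underlying the converse — especially this origin identity, which needs the ``return to $0$'' probabilities $S_{F_r-1,0}$ from Proposition \ref{prop431} and a telescoping summation obtained by pairing $r=2m-1$ with $r=2m$ — to be the main obstacle; the rest is bookkeeping with the block decomposition. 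The overall scheme mirrors the proof of Lemma \ref{lemamark} for the right eigenvector $\tilde Sv=v$, with the ``top of block'' states $F_n-1$ and the ``bottom of block'' states $F_n$ interchanged.
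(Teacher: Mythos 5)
Your proof of the forward implication is correct, and it rests on the same skeleton as the paper's argument (the block structure coming from Propositions \ref{prop431}, \ref{prop432}, \ref{prop433}), but it is cleaner at exactly the two places the paper glosses over. Where the paper merely asserts that ``it is possible to prove by induction'' that the column equations at $1,\ldots,F_n-1$ force $\mu_j=\xi_j\mu_0$, you obtain this from the invertibility of $I-S^{(m)}$ (substochastic, with the unique leaking row $F_m-1$ accessible from every state, so $(S^{(m)})^k\to 0$); and where the paper computes $\xi_{F_k}$ from the column equation at $j=F_k$ via an explicit telescoping sum of the return probabilities $S_{F_r-1,0}$, you extract the constants by pairing with the all-ones vector, getting $c_n=\mu_{F_n-1}S_{F_n-1,F_n}=\mu_{F_{n+1}-1}S_{F_{n-1}-1,F_{n-1}}$ and $d_{n-1}=\mu_{F_{n-1}-1}S_{F_{n-1}-1,F_{n-1}}$. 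Your value $S_{F_r-1,F_r}=p_1\cdots p_{\left[\frac{r+3}{2}\right]}$ is correct, the joint induction for $\xi_{F_n}$ and $\xi_{F_n-1}$ (the latter via $F_n-1=F_{n-1}+(F_{n-2}-1)$, as in the paper) closes, and the multiplicative relation is obtained exactly as in the paper via the shift $S_{i,j}=S_{i-F_n,j-F_n}$. So for what the paper actually proves, your route is a rigorous variant of the same idea.

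The genuine gap is the converse, and it is not merely an unfinished computation: the origin identity you flagged is false in general. Using $S_{F_{2s-1}-1,0}=S_{F_{2s}-1,0}=p_1\cdots p_s(1-p_{s+1})$ and $\xi_{F_r-1}=p_2\cdots p_{\left[\frac r2\right]+1}$, grouping $r=2s-1$ with $r=2s$ telescopes to
\begin{equation*}
\sum_{r\geq 1}\xi_{F_r-1}S_{F_r-1,0}
=p_1\sum_{s\geq 1}\left((p_2\cdots p_s)^2-(p_2\cdots p_{s+1})^2\right)
=p_1\left(1-\prod_{i\geq 2}p_i^2\right),
\end{equation*}
which equals $p_1$ if and only if $\prod_{i\geq 2}p_i=0$. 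Hence for $\mu_0\neq 0$ the column-$0$ equation fails whenever $\prod_{i\geq 2}p_i>0$: for instance with $p_i=1$ for all $i$ the sequence $\mu=(1,1,1,\ldots)$ satisfies all the stated conditions on $\xi$, yet $(\mu S)_0=0\neq\mu_0$. So the ``if'' direction can only hold in the recurrent case $\prod_{i\geq 2}p_i=0$ (where your telescoping identity is true and the rest of your outline goes through) or trivially when $\mu_0=0$; no bookkeeping will rescue it in general. Note, for perspective, that the paper's own proof begins ``Assume that $\mu S=\mu$'' and establishes only the forward implication, which is also the only direction invoked later (in the recurrence theorems); the discrepancy you ran into lies in the proposition's ``if and only if'' as stated, not in your block-decomposition approach.
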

\begin{proof}
Assume that $\mu S=\mu$. Since for each nonnegative integer $n\geq 1$ we have $S_{i,j}=0$, with $0<j<F_n\leq i$ (see proposition \ref{prop433}) then $\mu_j=\sum_{i=0}^{+\infty}\mu_i
S_{i,j}=\sum_{i=0}^{F_n-1}\mu_i S_{i,j}$, for all integer $n\geq 1$ and
$j\in\{1,\ldots,F_n-1\}$. Then it's possible to prove by induction
that there exists $(\xi_j)_{j\geq1}=(\xi_j(\overline{p}))_{j\geq1}\in l^\infty(\mathbb{N})$ such
that $\mu_j=\xi_j \mu_0$, for all $j\geq 1$. Furthermore, it's not hard to prove that
$\xi_1=1$, $\xi_2=p_2$, $\xi_{F_2}=\xi_3=p_2=p_{\left[\frac{2+1}{2}\right]+1}$
and $\xi_{F_2+1}=\xi_{F_3-1}=\xi_4=p_2=\xi_1\xi_{F_2}$.

Let $n\in\mathbb{N}$, $n\geq 3$ and $k\in\{1,\ldots,F_{n-1}-1\}$. Since $S_{i,F_n+k}=0$, for each $i\in\{0,\ldots, F_n+k-2\}$, we have that $\mu_{F_n+k}=\sum_{i=0}^{+\infty}\mu_iS_{i,F_n+k}=\sum_{i=F_n+k-1}^{+\infty}\mu_iS_{i,F_n+k}$.

Then from proposition \ref{prop432}, we have that

\begin{center}
$\mu_{F_n+k}=\displaystyle\sum_{i=F_n+k-1}^{+\infty}\mu_iS_{i,F_n+k}=
\sum_{i=F_n+k-1}^{+\infty}\mu_iS_{i-F_n,k}=
\sum_{i=k-1}^{+\infty}\mu_{i+F_n}S_{i,k}=
\sum_{i=0}^{+\infty}\mu_{i+F_n}S_{i,k}$.
\end{center}

Let $(\mu'_i)_{i\geq 0}$ defined by $\mu'_i=\mu_{F_n+i}$. Hence $\mu'_k=\sum_{i=0}^{+\infty}\mu'_iS_{i,k}=\sum_{i=0}^{F_n-1}\mu'_iS_{i,k}$, for all $k\in\{1,\ldots,
F_{n-1}-1\}$.

Therefore, $\mu'_k=\xi_k\mu'_0$, i.e. $\mu_{F_n+k}=\xi_k\mu_{F_n}$.
Since $\mu_{F_n}=\xi_{F_n}\mu_0$ and $\mu_{F_n+k}=\xi_{F_n+k}\mu_0$, it follows that
$\xi_{F_n+k}=\xi_{F_n}\xi_k$, for all $k\in\{1,\ldots, F_{n-1}-1\}$.

Now we have that $\xi_{F_n}=p_{\left[\frac{n+1}{2}\right]+1}$ and
$\xi_{F_m-1}=p_2p_3\ldots p_{\left[\frac{m}{2}\right]+1}$ is true for $n=1,2$ and $m=2,3$. Let $k>2$ a integer number and supposes that $\xi_{F_n}=p_{\left[\frac{n+1}{2}\right]+1}$ and $\xi_{F_m-1}=p_2p_3\ldots p_{\left[\frac{m}{2}\right]+1}$, for all $n\in\{1,\ldots,k-1\}$ and $m\in\{2,\ldots,k\}$.
Assume $\mu=\mu S$. Then

\begin{center}
$\mu_{F_k}=\displaystyle\sum_{i=0}^{+\infty}\mu_i
S_{i,F_k}=\sum_{i=F_k-1}^{+\infty}\mu_i
S_{i,F_k}=\mu_{F_k-1}S_{F_k-1,F_k}+\mu_{F_k}S_{F_k,F_k}+\sum_{i=F_k+1}^{F_k+F_{k-1}-1}\mu_iS_{i,F_k}$,
\end{center}

On the other hand, by the first part of proposition, we have that

\begin{center}
$\displaystyle\sum_{i=F_k+1}^{F_k+F_{k-1}-1}\mu_iS_{i,F_k}= \sum_{i=F_k+1}^{F_k+F_{k-1}-1}\mu_iS_{i-F_k,0}=\sum_{i=1}^{F_{k-1}-1}\mu_{F_k+i}S_{i,0}= \left(\sum_{i=1}^{F_{k-1}-1}\xi_iS_{i,0} \right)\mu_{F_k}= \left(\displaystyle\sum_{i=1}^{n-1}\xi_{F_i-1}S_{F_i-1,0}
\right)\mu_{F_k}$.
\end{center}

\np Thus, from proposition \ref{prop431}, it follows that

\begin{center}
$\mu_{F_k}= p_2p_3\ldots
p_{\left[\frac{k}{2}\right]+1}\mu_0p_1p_2\ldots
p_{\left[\frac{k+1}{2}\right]+1} +(1-p_1)\mu_{F_k}+$
\end{center}
\begin{center}
$+\left(p_1(1-p_2)+p_2p_1(1-p_2)+p_2p_1p_2(1-p_3)+p_2p_3
p_1p_2(1-p_3)+p_2p_3p_1p_2p_3(1-p_4)+\right.$
\end{center}
\begin{center}
$\left. +\ldots + p_2p_3\ldots p_{\left[\frac{k-1}{2}\right]+1}p_1p_2\ldots
p_{\left[\frac{k}{2}\right]}
\left(1-p_{\left[\frac{k}{2}\right]+1}\right)\right)\mu_{F_k}$.
\end{center}

\np Hence, it follows that $p_2p_3\ldots p_{\left[\frac{k-1}{2}\right]+1}\mu_{F_k}=p_2p_3\ldots
p_{\left[\frac{k+1}{2}\right]+1}\mu_0$, i.e.
$\mu_{F_k}=p_{\left[\frac{k+1}{2}\right]+1}\mu_0$.

On the other hand, we have that

\begin{center}
$\xi_{F_{k+1}-1}=\xi_{F_k+F_{k-1}-1}=\xi_{F_{k-1}-1}\xi_{F_k}=p_2p_3\ldots
p_{\left[\frac{k-1}{2}\right]+1}
p_{\left[\frac{k+1}{2}\right]+1}=p_2p_3\ldots
p_{\left[\frac{k+1}{2}\right]+1}$.
\end{center}
\end{proof}

\begin{Theorem}
If $\prod_{i=1}^{+\infty}p_i=0$ and $\sum_{i=1}^{+\infty}p_i=+\infty$ then the Markov
chains is null recurrent.
\end{Theorem}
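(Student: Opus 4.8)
The plan is to combine the recurrence criterion of Proposition \ref{marktrans} with the classical dichotomy that an irreducible recurrent Markov chain is positive recurrent if and only if its invariant measure has finite total mass. First, since $\prod_{i=1}^{+\infty}p_i=0$, Proposition \ref{marktrans} (in contrapositive form) tells us the chain is recurrent; it therefore remains only to exclude positive recurrence. By Proposition \ref{marknula}, every sequence $\mu=(\mu_i)_{i\geq 0}$ with $\mu S=\mu$ is of the form $\mu_i=\xi_i\mu_0$ ($i\geq 1$) for the explicit sequence $(\xi_i)_{i\geq 1}$ described there, and taking $\mu_0=1$ yields a strictly positive, finite invariant measure (each $\xi_i$ being a finite product of elements of $]0,1]$); moreover that proposition shows this invariant measure is unique up to a multiplicative constant. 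Consequently the chain is positive recurrent if and only if $\sum_{i\geq 1}\xi_i<+\infty$, so the whole statement reduces to proving that this series diverges.

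To prove divergence I would estimate $\sum_i\xi_i$ from below using only the subseries over the Fibonacci indices $F_n$, $n\geq 1$, which form a set of pairwise distinct positive integers. Proposition \ref{marknula} gives $\xi_{F_n}=p_{\left[\frac{n+1}{2}\right]+1}$, and splitting into $n=2m-1$ and $n=2m$ yields $\xi_{F_{2m-1}}=\xi_{F_{2m}}=p_{m+1}$ for every $m\geq 1$. Since all the $\xi_i$ are nonnegative,
\[
\sum_{i\geq 1}\xi_i\;\geq\;\sum_{n\geq 1}\xi_{F_n}\;=\;\sum_{m\geq 1}\bigl(\xi_{F_{2m-1}}+\xi_{F_{2m}}\bigr)\;=\;2\sum_{m\geq 1}p_{m+1}\;=\;2\sum_{i\geq 2}p_i\;=\;+\infty,
\]
the last equality being the hypothesis $\sum_{i=1}^{+\infty}p_i=+\infty$ (deleting the single term $p_1$ does not affect divergence). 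Hence the invariant measure is not summable, so the chain is not positive recurrent, and being recurrent it is null recurrent.

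I do not expect a genuine obstacle here: the invariant measure is already computed, and the only point requiring a little care is the passage from ``unique invariant measure'' to the positive/null recurrence dichotomy. This is the standard characterization (the references \cite{la} and \cite{o} used for the transient case also cover it), and it can equivalently be packaged as the statement that the mean first return time to state $0$ of a recurrent chain equals $\bigl(\sum_{i\geq 0}\mu_i\bigr)/\mu_0=1+\sum_{i\geq 1}\xi_i$, which we have just shown to be infinite. Note that the hypothesis $\prod_{i=1}^{+\infty}p_i=0$ is used solely to guarantee recurrence, while $\sum_{i=1}^{+\infty}p_i=+\infty$ enters only through the elementary lower bound $\sum_{i\geq 1}\xi_i\geq\sum_{n\geq 1}\xi_{F_n}$.
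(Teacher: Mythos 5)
Your proposal is correct and follows essentially the same route as the paper: recurrence comes from Proposition \ref{marktrans}, and positive recurrence is excluded by combining Proposition \ref{marknula} with the lower bound over Fibonacci indices, $\sum_{n\geq 1}\xi_{F_n}=2\sum_{i\geq 2}p_i=+\infty$, which is exactly the paper's computation. The only cosmetic difference is that you phrase the conclusion via the summability criterion for the (essentially unique) invariant measure, whereas the paper derives a direct contradiction with the existence of an invariant measure in $l^1(\mathbb{N})$.
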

\begin{proof}
Since $\prod_{i=1}^{+\infty}p_i=0$ then by the theorem
\ref{marktrans} we have that the Markov chain is recurrent.

Let $\mu=(\mu_0,\mu_1,\mu_2,\ldots)\in l^1(\mathbb{N})$ be a
invariant measure for the operator $S$. Then $\mu S=\mu$. Hence,
from proposition \ref{marknula}, it follows that $\|\mu\|_1=\displaystyle\sum_{i=0}^{+\infty}|\mu_i|\geq
\sum_{i=1}^{+\infty}|\mu_{F_i}|=\sum_{i=2}^{+\infty}2p_i= +\infty$, which yields a contradiction to the fact that $\mu\in
l^1(\mathbb{N})$. Hence, if $\sum_{i=1}^{+\infty}p_i=+\infty$ then $S$ has no
invariant probability measure and so cannot be positive recurrent.

Therefore, the Markov chain is null recurrent.
\end{proof}

\begin{Theorem}
There exists a sequence of probabilities $\overline{p}=(p_i)_{i\geq 1}$
such that satisfies the Markov chain to be positive recurrent. In particular, if $\sum_{i=2}^{+\infty}p_iF_{2(i-1)}<+\infty$ then the Markov chain is positive recurrent.
\label{recpositiva}
\end{Theorem}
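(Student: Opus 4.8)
The plan is to exhibit an explicit stationary probability distribution and then invoke the standard criterion that an irreducible Markov chain admitting a stationary probability distribution is positive recurrent (see, e.g., \cite{o}). By Proposition~\ref{marknula}, every solution of $\mu S=\mu$ has the form $\mu_i=\xi_i\mu_0$ with $(\xi_i)_{i\geq1}$ the sequence described there, and each $\xi_i$ is a positive finite product of the $p_j\in\,]0,1]$; extending by $\xi_0:=1$, the whole problem reduces to showing $Z:=\sum_{i\geq0}\xi_i<+\infty$, since then $\mu_i:=\xi_i/Z$ is a stationary distribution. (Observe that $\sum_{i\geq2}p_iF_{2(i-1)}<+\infty$ forces $p_i\to0$, hence $\prod_i p_i=0$, so the chain is automatically recurrent by Proposition~\ref{marktrans}; this is consistent with, but not needed for, the argument.)

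The key point is a block estimate. First, by induction on $j$ one checks $0<\xi_j\leq1$ for all $j$: writing $j=F_n+k$ with $0\leq k\leq F_{n-1}-1$ --- these exhaust the block $B_n:=\{F_n,\dots,F_{n+1}-1\}$ because $F_n+(F_{n-1}-1)=F_{n+1}-1$ --- one has either $k=0$, so $\xi_j=\xi_{F_n}=p_{\left[\frac{n+1}{2}\right]+1}\leq1$, or $1\leq k\leq F_{n-1}-1$, so $\xi_j=\xi_{F_n}\xi_k\leq1$ by the inductive hypothesis (note $k<F_n\leq j$). Consequently $\xi_j\leq\xi_{F_n}$ for every $j\in B_n$, and since $|B_n|=F_{n+1}-F_n=F_{n-1}$,
\[
\sum_{j\in B_n}\xi_j\ \leq\ \xi_{F_n}\,F_{n-1}\ =\ p_{\left[\frac{n+1}{2}\right]+1}\,F_{n-1}\qquad(n\geq1).
\]

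Summing over $n\geq1$ and adding the separate terms $\xi_0=\xi_1=1$, it remains to bound $\sum_{n\geq1}p_{\left[\frac{n+1}{2}\right]+1}F_{n-1}$. Splitting into $n=2l$ and $n=2l+1$ and reindexing via $i=l+1$, respectively $i=l+2$, turns this into $\sum_{i\geq2}p_iF_{2i-3}+\sum_{i\geq3}p_iF_{2i-4}$, and the Fibonacci recursion $F_{2i-3}+F_{2i-4}=F_{2i-2}$ collapses it to at most $\sum_{i\geq2}p_iF_{2(i-1)}$. Hence $Z\leq2+\sum_{i\geq2}p_iF_{2(i-1)}<+\infty$, which proves the ``in particular'' assertion. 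The existence statement then follows by choosing any admissible sequence meeting the hypothesis, for instance $p_i=4^{-i}$: since $F_{2(i-1)}\leq\varphi^{2i}$ and $\varphi^2<4$, the series $\sum_{i\geq2}p_iF_{2(i-1)}$ converges geometrically, so the associated chain is positive recurrent.

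I expect no conceptual obstacle: the standard positive-recurrence criterion together with the explicit invariant measure of Proposition~\ref{marknula} carries the argument, and the only care needed is bookkeeping --- verifying that the recursion $\xi_{F_n+k}=\xi_{F_n}\xi_k$ really covers all of the block $B_n$, handling the first few small blocks by hand, and keeping the floor functions $\left[\frac{n+1}{2}\right]$ straight through the reindexing.
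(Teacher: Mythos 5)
Your proof is correct, and its core --- the block estimate $\sum_{j=F_n}^{F_{n+1}-1}\xi_j\leq \xi_{F_n}F_{n-1}=p_{\left[\frac{n+1}{2}\right]+1}F_{n-1}$ from $0<\xi_i\leq 1$, followed by the collapse $F_{2i-3}+F_{2i-4}=F_{2(i-1)}$ --- is exactly the argument the paper uses for the ``in particular'' assertion (there phrased via the block sums $\alpha_n$ and the bound $\|\mu\|_1\leq 2|\mu_0|+|\mu_0|\sum_{k\geq2}p_kF_{2(k-1)}$). Where you genuinely diverge is the existence claim: you deduce it from the summability criterion by exhibiting an explicit admissible sequence ($p_i=4^{-i}$, using $F_{2(i-1)}\leq\varphi^{2i}$ and $\varphi^2<4$), which is shorter and cleaner. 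The paper instead runs a separate inductive construction: it fixes a summable sequence $(b_n)$, chooses $p_k$ small enough that $p_k\bigl(2\sum_{i=0}^{2(k-2)-3}a_i+2+a_{2(k-2)-2}\bigr)\leq b_k$ where the $a_n$ reproduce the actual block sums $\alpha_n$, and verifies $\|\mu\|_1<\infty$ directly. That construction is heavier, but because it compares $p_k$ against the true partial sums of the invariant measure rather than against the crude bound $F_{n-1}$, it produces positive recurrent examples whose $p_i$ may decay far more slowly than $1/F_{2(i-1)}$; your route only certifies examples inside the summability hypothesis, which is all the theorem's first sentence requires. One small bookkeeping slip: in the reindexing of the odd block $n=2l+1$ with $i=l+2$, the sum starts at $i=2$, not $i=3$ (the missing term is $p_2F_0$); with the corrected range the two sums combine to give exactly $\sum_{i\geq2}p_iF_{2(i-1)}$, so your final bound $Z\leq 2+\sum_{i\geq2}p_iF_{2(i-1)}$ stands unchanged.
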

\begin{proof} We will construct a probabilities sequence $(p_i)_{i\geq 1}$,
like the following:

Let $p_1,p_2\in]0,1]$ and $(b_n)_{n\geq 0}\in l^1(\mathbb{N})$, with
$b_1=1$ and $b_2=3p_2$.

Let $(a_n)_{n\geq 0}$ be a sequence of positive real number, where
$a_0=1=b_1$, $a_1=p_2$, $a_2=2p_2$
and for each $k\geq 3$, let $0<p_k<1$ such that $p_k\left(2\displaystyle\sum_{i=0}^{2(k-2)-3}a_i+2+a_{2(k-2)-2}\right)\leq
b_k$ and define $a_{2(k-1)-1}:=p_k\displaystyle\sum_{i=0}^{2(k-2)-3}a_i+p_k$ and
$a_{2(k-1)}:=p_k\displaystyle\sum_{i=0}^{2(k-2)-2}a_i+p_k$.

Construct the Fibonacci stochastic adding machine related to the
probability sequence $(p_i)_{i\geq 1}$ and let $S$ be the transition operator and $\mu\in l^\infty(\mathbb{N})$
such that $\mu S=\mu$. Thus, from proposition \ref{marknula},
$\mu_i=\xi_i\mu_0$, for all $i\geq 0$, where $\xi_0=1$,
$\xi_1=1$, $\xi_2=p_2$ and for each $n\geq 2$ we have
$\xi_{F_n+k}=\xi_{F_n}\xi_k$, for each
$k\in\{1,\ldots,F_{n-1}-1\}$, and
$\xi_{F_n}=p_{\left[\frac{n+1}{2}\right]+1}$.

Define the sequence $(\alpha_n)_{n\geq 0}$ by
$\alpha_n=\displaystyle\sum_{i=F_n}^{F_{n+1}-1}\xi_i$ and note
that, from proposition \ref{marknula}, for each $n\geq 3$, we have

\begin{center}
$\alpha_n=\displaystyle
\sum_{i=F_n}^{F_{n+1}-1}\xi_i=\sum_{i=0}^{F_{n-1}-1}\xi_{F_n+i}=
\xi_{F_n}\left(\sum_{i=1}^{F_{n-1}-1}\xi_i+\xi_0\right)=
\xi_{F_n}\left(\sum_{i=0}^{n-2}\alpha_i+1\right)$.
\end{center}

It's possible to prove by induction that $\alpha_n=a_n$, for all
$n\geq 0$. Hence,

\begin{center}
$\|\mu\|_1=|\mu_0|\displaystyle\sum_{i=0}^{+\infty}\xi_i=
|\mu_0|\xi_0+|\mu_0|\sum_{i=0}^{+\infty}a_i =|\mu_0|+
|\mu_0|\left(a_0+a_1+a_2+\sum_{k=3}^{+\infty}(a_{2(k-1)-1}+a_{2(k-1)})\right)\leq
|\mu_0|+
|\mu_0|\left(b_1+b_2+\sum_{k=3}^{+\infty}b_k\right)<+\infty$.
\end{center}

Therefore, $\mu\in l^1(\mathbb{N})$ and $\mu$ is a invariant measure
for the operator $S$, i.e. the Markov chain is positive recurrent. On the other hand, we have
\begin{equation}
\|\mu\|_1=\displaystyle |\mu_0|+|\mu_0|\sum_{i=0}^{+\infty}\alpha_i=
2|\mu_0|+|\mu_0|\sum_{k=2}^{+\infty}(\alpha_{2(k-1)-1}+
\alpha_{2(k-1)}) \label{saci}
\end{equation}
\np and $\alpha_n=\xi_{F_n}\displaystyle \sum_{i=0}^{F_{n-1}-1}\xi_i$
for all $n\geq 1$. From proposition \ref{marknula}, we have that
$0<\xi_i\leq 1$, for all $i\geq 0$. Wherefore,
$\alpha_n\leq\xi_{F_n}F_{n-1}$, for all $n\geq 1$. Hence, we have from relation (\ref{saci}) that
$\|\mu\|_1\leq
2|\mu_0|+|\mu_0|\sum_{k=2}^{+\infty}p_k(F_{2(k-1)-2}+
F_{2(k-1)-1})=2|\mu_0|+|\mu_0|\sum_{k=2}^{+\infty}p_kF_{2(k-1)}$.

Therefore, if $\sum_{k=2}^{+\infty}p_kF_{2(k-1)}<+\infty$ then $\mu\in l^1(\mathbb{N})$ and the Markov chain is
positive recurrent.
\end{proof}

\section{Spectral properties of the transition operator}

\begin{Definition}
Let $X$ be a Banach space over $\mathbb{C}$ and $T:X\longrightarrow X$ be a linear operator. Then we have the following definitions:

\np \textbf{a)} \textit{Spectrum of $T$: } $\sigma (T)=\{\lambda\in\mathbb{C}:T-\lambda I \textrm{ is not bijective} \}$.

\np \textbf{b)} \textit{Point spectrum of $T$:} $\sigma_{pt} (T)=\{\lambda\in\mathbb{C}:T-\lambda I \textrm{ is not one-to-one} \}$.

\np \textbf{c)} \textit{Approximate point spectrum of $T$: } $\sigma_a(T)=\{\lambda\in\mathbb{C}: \textrm{there exists a sequence } (x_n)_{n\geq 0}$ in $X, \textrm{ with } \|x_n\|=1 \textrm{ for all } n\geq 0 \textrm{ and } \lim_{n\to\infty}\| (\lambda I-T)x_n\|=0 \}$.

\np \textbf{d)} \textit{Continuous spectrum of $T$: } $\sigma_c(T)=\{\lambda\in\mathbb{C}: \lambda I-T \textrm{ is one-to-one, } \overline{(\lambda I-T)X}=X \textrm{ and } (\lambda I-T)X\neq X \}$.

\np \textbf{e)} \textit{Residual spectrum of $T$: } $\sigma_r(T)=\{\lambda\in\mathbb{C}: \lambda I-T \textrm{ is one-to-one and } \overline{(\lambda I-T)X}\neq X \}$.
\end{Definition}

Hence, we have $\sigma(T)=\sigma_{pt}(T)\cup\sigma_c(T)\cup \sigma_r(T)$ and $\sigma_a(T)\subset\sigma(T)$.

\begin{Definition} For each $\lambda\in\mathbb{C}$, let $(q_n(\lambda))_{n\geq
1}=(q_n)_{n\geq 1}$ be a sequence defined by
$q_n=q_{F_0}^{\varepsilon_0}\ldots q_{F_N}^{\varepsilon_N}$, where
$\displaystyle n=\sum_{i=0}^N\varepsilon_iF_i$ with
$\varepsilon_i\in\{0,1\}$, $\varepsilon_{i+1}\varepsilon_{i}\neq
11$, $q_{F_0}=\frac{\lambda-(1-p_1)}{p_1}$,
$q_{F_1}=\frac{1}{p_2}q_{F_0}^2-\left(\frac{1}{p_2}-1\right)$ and
$q_{F_n}=\frac{1}{r_n}q_{F_{n-1}}q_{F_{n-2}}-\left(\frac{1}{r_n}-1\right)$,
where $r_n=p_{\left[\frac{n+1}{2}\right]+1}$, for all $n\geq 1$.
\label{defi431}
\end{Definition}

\begin{Theorem} Acting in $l^\infty(\mathbb{N})$, we have
$\sigma_{pt}(S)=\{\lambda\in\mathbb{C}:(q_n(\lambda))_{n\geq 0}
\textrm{ is bounded}\}$. \label{teorema431}
\end{Theorem}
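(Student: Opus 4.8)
The plan is to characterize the point spectrum directly: $\lambda \in \sigma_{pt}(S)$ if and only if there is a nonzero bounded sequence $v = (v_n)_{n\geq 0} \in l^\infty(\mathbb{N})$ with $(S - \lambda I)v = 0$, i.e. $Sv = \lambda v$. Writing out the eigenvalue equation row by row, the $n$-th equation reads $\sum_{j} S_{n,j} v_j = \lambda v_n$. Because $S_{n,j} = 0$ whenever $j > n+1$ (Proposition \ref{prop431}), the coefficient of $v_{n+1}$ in the $n$-th equation is $S_{n,n+1}$, which is a product of $p_i$'s and hence strictly positive; solving for $v_{n+1}$ expresses it as a linear combination of $v_0, v_1, \dots, v_n$. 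So by induction $v_n = q_n(\lambda) v_0$ for a uniquely determined sequence of polynomials $q_n(\lambda)$, and the only freedom is the scalar $v_0$. Thus a nonzero bounded eigenvector exists precisely when $v_0 \neq 0$ and $(q_n(\lambda))_{n\geq 0}$ is bounded. It remains to identify the recursion satisfied by $q_n$ with the one in Definition \ref{defi431}.

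First I would pin down the base cases. The $0$-th equation is $S_{0,0} v_0 + S_{0,1} v_1 = \lambda v_0$, i.e. $(1-p_1)v_0 + p_1 v_1 = \lambda v_0$, giving $v_1 = \frac{\lambda - (1-p_1)}{p_1} v_0$, so $q_1 = q_{F_0} = \frac{\lambda-(1-p_1)}{p_1}$. Next, using the entries of $S$ corresponding to transitions out of state $2 = F_2$ (the row $p_1(1-p_2), 0, 1-p_1, p_1p_2, 0, \dots$ in Table \ref{matrizspi}, reflecting Proposition \ref{prop431}(iii) with $s=1$), the equation at state $2$ yields $v_3$ in terms of $v_0, v_2$; using $v_2 = q_{F_1} v_0$ one checks $q_3 = q_{F_2}$, and it should come out as $q_{F_2} = \frac{1}{r_2} q_{F_1} q_{F_0} - (\frac{1}{r_2} - 1)$ with $r_2 = p_{[3/2]+1} = p_2$. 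The point is that the structure of $S$ encoded in Propositions \ref{prop431} and \ref{prop432} makes the $n$-th equation homogeneous of a very rigid form: for $n = F_r$ the relevant row has the "carry" entries $S_{F_r, \sum_{i=0}^j F_{r-1-2i}} = p_1\cdots p_{l-j}(1-p_{l-j+1})$ together with $S_{F_r,0}$, $S_{F_r,F_r}=1-p_1$, and $S_{F_r,F_r+1}$, exactly as in the proof of Lemma \ref{lemamark}.

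The key computational step is the inductive identity $q_{F_n} = \frac{1}{r_n} q_{F_{n-1}} q_{F_{n-2}} - (\frac{1}{r_n}-1)$ and the multiplicativity $q_{F_n + k} = q_{F_n} \cdot q_k$ for $k \in \{1,\dots,F_{n-1}-1\}$, which mirrors the relations $\xi_{F_n+k} = \xi_{F_n}\xi_k$ from Proposition \ref{marknula} (the left-eigenvector analysis) transposed to the right-eigenvector side. I would prove the multiplicativity first: the eigenvalue equation at a state $F_n + i$ with $0 \le i \le F_{n-1}-2$ involves only states $\geq F_n$ (by Proposition \ref{prop433}) and, after the substitution $j \mapsto j - F_n$, becomes the eigenvalue equation for the shifted sequence $v'_i := v_{F_n+i}$ with the same block of transition probabilities (Proposition \ref{prop432}); hence $v'_i = q_i v'_0$, i.e. $v_{F_n+i} = q_i v_{F_n}$. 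Combined with $v_{F_n+i} = q_{F_n+i} v_0$ and $v_{F_n} = q_{F_n} v_0$ this gives $q_{F_n+i} = q_{F_n} q_i$. Then the recursion for $q_{F_n}$ itself is extracted from the equation at state $F_n$ (the "case (i)" computation of Lemma \ref{lemamark}): substituting $v_{\sum_{i=0}^j F_{r-1-2i}} = q_{F_{r-1}} v_0$ — valid because all these indices lie in $[F_{r-1}, F_r)$ where $q$ is constant equal to $q_{F_{r-1}}$ by the multiplicativity just established — collapses the sum, leaving $p_1\cdots p_{l+1} p_{l+2}\, v_{F_r+1} = p_1\cdots p_{l+1}\, q_{F_{r-1}} v_0$, hence $q_{F_r+1} = \frac{1}{p_{l+2}} q_{F_{r-1}}$; finally, using $F_r = F_{r-1} + F_{r-2}$, multiplicativity gives $q_{F_r} = q_{F_{r-1}} q_{F_{r-2}}$ up to the appropriate normalization, and matching the index $[\frac{r}{2}]$ versus $[\frac{r+1}{2}]$ arithmetic produces exactly $q_{F_r} = \frac{1}{r_r} q_{F_{r-1}} q_{F_{r-2}} - (\frac{1}{r_r}-1)$.

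The main obstacle I anticipate is purely bookkeeping: carefully verifying that the rigid block structure of $S$ really does force $v_n = q_n(\lambda) v_0$ with the $q_n$ obeying the stated recursion, handling the two parities of $r$ (the $F_r$ vs. $F_r+1$ indexing and the floor functions $[\frac{r}{2}], [\frac{r+1}{2}]$) and the off-by-one between $q_{F_r+1}$ and the claimed formula for $q_{F_r}$. Once the recursion is matched, the spectral conclusion is immediate: $S - \lambda I$ fails to be injective on $l^\infty(\mathbb{N})$ iff the (unique up to scalar) formal solution $v_n = q_n(\lambda) v_0$ is bounded, i.e. iff $(q_n(\lambda))_{n\geq 0}$ is bounded, which is the assertion. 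I would close by remarking that convergence/boundedness subtleties do not arise because the recursion is finite-range and deterministic, so no summability hypothesis on $\overline p$ is needed here.
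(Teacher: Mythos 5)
Your overall strategy coincides with the paper's: reduce to $v_n=q_n(\lambda)v_0$ using that $S_{n,j}=0$ for $j>n+1$ and $S_{n,n+1}>0$, prove the multiplicativity $q_{F_n+k}=q_{F_n}q_k$ for $k\in\{1,\ldots,F_{n-1}-1\}$ from the self-similar block structure (Propositions \ref{prop432} and \ref{prop433}), and then extract the recursion at Fibonacci indices; this is exactly Lemma \ref{lema432} plus the paper's proof of Theorem \ref{teorema431}. However, the step where you actually derive the recursion for $q_{F_r}$ is broken. You import the ``case (i)'' computation of Lemma \ref{lemamark}, but that lemma concerns the invariant equation $\tilde{S}v=v$, whose solution happens to be \emph{constant} on each block $[F_r,F_{r+1})$ (equal to $\Pi_r v_1$). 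For a general eigenvalue $\lambda$ this is false: by the multiplicativity you just established, $q_{F_{r-1}+k}=q_{F_{r-1}}q_k$ genuinely depends on $k$, so your substitution $v_{\sum_{i=0}^j F_{r-1-2i}}=q_{F_{r-1}}v_0$ is wrong and the sum does not collapse; the relation you obtain, $q_{F_r+1}=\frac{1}{p_{l+2}}q_{F_{r-1}}$, is not the target identity (it lacks both the factor $q_{F_{r-2}}$ and the affine term). You then try to finish with ``multiplicativity gives $q_{F_r}=q_{F_{r-1}}q_{F_{r-2}}$ up to normalization,'' but multiplicativity was proved only for $k\le F_{n-1}-1$, and $k=F_{r-2}$ applied to $F_{r-1}$ (i.e. $F_{r-1}+F_{r-2}=F_r$) is precisely the excluded boundary case; the true boundary relation is the affine recursion $q_{F_r}=\frac{1}{r_r}q_{F_{r-1}}q_{F_{r-2}}-\bigl(\frac{1}{r_r}-1\bigr)$, which is exactly what has to be proved, so at the crucial point the argument is circular. (A smaller slip: the row carrying the ``carry'' entries $p_1\cdots p_{l-j}(1-p_{l-j+1})$ is the one at state $F_r-1$, whose representation is $(10)^l$ or $(10)^l1$; the row at state $F_r$ has only the two entries $1-p_1$ and $p_1$ and yields nothing beyond $q_{F_r+1}=q_{F_0}q_{F_r}$.)

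What the paper does instead, and what your sketch is missing, is a comparison of two rows rather than a collapse of one: write the eigen-equation $Sv=\lambda v$ at the states $F_{2n-1}-1$ and $F_{2n+1}-1$ (Proposition \ref{prop431}(iv)), solve each for $v_{F_{2n-1}}$, respectively $v_{F_{2n+1}}$, and then apply the multiplicativity \emph{termwise} to the second expression: every index appearing there has the form $F_{2n}+m$ with $m\in\{F_{2n-1}-1,\,F_{2n-1}-F_{2i-1}\}$, all strictly less than $F_{2n-1}$, so $v_{F_{2n}+m}=q_{F_{2n}}v_m$ is legitimate. Separating the $i=n+1$ term (which is $p_1\cdots p_{n+1}(1-p_{n+2})v_0$) produces $v_{F_{2n+1}}=\bigl(1-\frac{1}{p_{n+2}}\bigr)v_0+\frac{1}{p_{n+2}}q_{F_{2n}}q_{F_{2n-1}}v_0$, i.e. the desired recursion with its affine correction; the even-index case is analogous, and the base cases $q_{F_0},q_{F_1}$ are as you computed. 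Without this two-row comparison (or some equivalent argument valid for arbitrary $\lambda$), your proposal does not establish that the $q_n$ you construct satisfy Definition \ref{defi431}, which is the substance of the theorem.
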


\begin{Remark}
In particular, $\sigma_{pt}(S)\subset E:=\{\lambda\in\mathbb{C}:
(q_{F_n}(\lambda))_{n\geq 0} \textrm{ is
bounded}\}$ and $E=\{z\in\mathbb{C}:
(\psi_n (z,z))_{n\geq 0} \textrm{ is
bounded}\}$, where $\psi_n=g_n\circ
g_{n-1}\circ\ldots\circ g_0$ and $g_n:\mathbb{C}^2\longrightarrow\mathbb{C}^2$ are maps defined by
$g_0(x,y)=\left(\frac{x-(1-p_1)}{p_1},\frac{y-(1-p_1)}{p_1}\right)$ and $g_n(x,y)=\left(\frac{1}{r_n}xy-\left(\frac{1}{r_n}-1\right),x\right)$ for all $n\geq 1$.\label{obslazara}
\end{Remark}

\np \textbf{Remark} The proof of the theorem \ref{teorema431} is similar to the demonstration of the same theorem proved in \cite{ms}, where $p_i=p\in]0,1]$ is fixed,
for all $i\geq 1$. To help the reader, the proofs will be displayed.

Before to proof the theorem \ref{teorema431}, it will be necessary
the followings results:

\begin{Lemma} Let $\lambda\in\mathbb{C}$ and $v\in l^\infty(\mathbb{N})$
such that $Sv=\lambda v$. Then for each $k\geq 1$, there exists a
complex number $\beta_k=\beta_k(\overline{p},\lambda)$, where
$\overline{p}=(p_i)_{i\geq 1}$, such that $v_k=\beta_kv_0$.
Furthermore, if $m,n\in\mathbb{N}$, with $n>1$, satisfies
$0<m<F_{n-1}$ then $\beta_{m+F_n}=\beta_{F_n}\beta_m$. In
particular, if $n=\sum_{i=0}^Nb_iF_i$, where
$b_ib_{i-1}\displaystyle<_{lex}11$, for all $i\in\{1,\ldots,N\}$
then $\beta_n=\beta_{F_0}^{b_0}\ldots \beta_{F_n}^{b_N}$.
\label{lema432}
\end{Lemma}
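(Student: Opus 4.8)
The plan is to solve the eigen‑equation $Sv=\lambda v$ coordinate by coordinate, exploiting the almost lower‑triangular and self‑similar shape of $S$ recorded in Propositions \ref{prop431}--\ref{prop433}; this is the same mechanism used for the base‑$2$ operator, adapted to the Fibonacci block structure.

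First I would produce the numbers $\beta_k$. By Proposition \ref{prop431} one has $S_{i,i+k}=0$ for all $i\geq 0$ and $k\geq 2$, while $S_{i,i+1}>0$ (the successor digit is always produced, so the transition $i\to i+1$ occurs with positive probability). Hence the $i$‑th row of $Sv=\lambda v$ reads $\sum_{j=0}^{i+1}S_{i,j}v_j=\lambda v_i$ and can be solved for $v_{i+1}$. Putting $\beta_0=1$ and, recursively, $\beta_{i+1}:=S_{i,i+1}^{-1}\big(\lambda\beta_i-\sum_{j=0}^{i}S_{i,j}\beta_j\big)$ --- a number depending only on $\overline{p}$ and $\lambda$ --- an immediate induction yields $v_k=\beta_kv_0$ for all $k\geq 0$. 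Equivalently $\beta=(\beta_k)_{k\geq 0}$ is the unique sequence with first entry $1$ satisfying $S\beta=\lambda\beta$ coordinatewise, and the multiplicative relations below really concern this universal solution rather than $v$ itself; boundedness is irrelevant at this stage.

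Next I would prove $\beta_{F_n+m}=\beta_{F_n}\beta_m$ for $n>1$ and $0<m<F_{n-1}$. The key point is that for a non‑top state $j$ of the block $\{F_n,\dots,F_{n+1}-1\}$, i.e. $F_n\leq j\leq F_{n+1}-2$, one has $S_{j,i}=0$ for every $i\in\{0,\dots,F_n-1\}$: the cases $0<i<F_n$ are Proposition \ref{prop433}, and $S_{j,0}=0$ because by Proposition \ref{prop431} a jump to $0$ can occur only from a state of the form $F_r-1$, which inside this block is only the top state $F_{n+1}-1$. Combining this with $S_{j,i}=0$ for $i>j+1$, the $(F_n+k)$‑th row of $S\beta=\lambda\beta$, for $k=0,\dots,F_{n-1}-2$, involves only the coordinates $\beta_{F_n},\dots,\beta_{F_{n+1}-1}$, and by Proposition \ref{prop432} (whose self‑similarity $S_{F_n+k,\,F_n+l}=S_{k,l}$ also holds at $l=0$) it becomes $\sum_{l=0}^{k+1}S_{k,l}\,\beta_{F_n+l}=\lambda\beta_{F_n+k}$. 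Thus $(\beta_{F_n},\dots,\beta_{F_{n+1}-1})$ satisfies the first $F_{n-1}-1$ of the very equations that determine $(\beta_0,\dots,\beta_{F_{n-1}-1})$, but with the free initial value $\beta_{F_n}$ in place of $\beta_0=1$; by uniqueness and linearity of that recursion, $\beta_{F_n+m}=\beta_{F_n}\beta_m$ for all $0\leq m\leq F_{n-1}-1$, in particular for $0<m<F_{n-1}$.

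The product formula over the Zeckendorf expansion then follows by strong induction on $n$: if $n=F_N$ there is nothing to do; otherwise, with $F_N$ the top term, $n=F_N+m$ where $m=\sum_{i=0}^{N-2}b_iF_i$ (since $b_{N-1}=0$) and $0<m<F_{N-1}$ by the identity $F_{N-2}+F_{N-4}+\cdots=F_{N-1}-1$, so the block‑shift relation and the induction hypothesis give $\beta_n=\beta_{F_N}\beta_m=\beta_{F_N}\prod_{i=0}^{N-2}\beta_{F_i}^{b_i}=\prod_{i=0}^{N}\beta_{F_i}^{b_i}$. I expect the only delicate point to be the boundary bookkeeping in the middle step: confirming that a non‑top state of a block has no transition to $0$ and that Proposition \ref{prop432} covers the bottom coordinate of the block --- both easy consequences of Propositions \ref{prop431}--\ref{prop433}, but worth stating carefully; the rest is routine linear recursion and induction.
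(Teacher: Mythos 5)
Your proposal is correct and follows essentially the same route as the paper's proof: solve the eigen-equation row by row (using $S_{i,i+k}=0$ for $k\geq 2$ and $S_{i,i+1}>0$) to get $v_k=\beta_k v_0$, then use Propositions \ref{prop433} and \ref{prop432} to show that the block $(\beta_{F_n+l})_{0\leq l\leq F_{n-1}-1}$ obeys the same linear recursion with initial value $\beta_{F_n}$, and conclude the Zeckendorf product formula by induction. Your explicit check that $S_{j,0}=0$ for the non-top states of a block (a point the paper subsumes under Proposition \ref{prop433} without comment, although that proposition only covers $0<i<F_n$) is a minor but welcome extra precision, not a change of method.
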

\begin{proof} Let $\lambda$ be an eigenvalue of $S$ associated to the
eigenvector $v=(v_i)_{i\geq 0}\in l^\infty (\mathbb{N})$. Since the
transition probability from any nonnegative integer $i$ to any
integer $i+k$, with $k\geq 2$, is $p_{i,i+k}=0$ (see proposition
(\ref{prop431})), the operator $S$ satisfies $S_{i,i+k}=0$, for all
$i,k\in\mathbb{N}$, with $k\geq 2$. Hence, for every integer $k\geq
1$, we have
\begin{equation}
\displaystyle\sum_{i=0}^kS_{k-1,i}v_i=\lambda v_{k-1}. \label{42}
\end{equation}
\np Then it is possible to prove by induction on $k$ that for all
integer $k\geq 1$, there exists a complex number
$\beta_k=\beta_k(\overline{p},\lambda)\in\mathbb{C}$ such that
$v_k=\beta_k v_0$.

Put $k=F_n+j$, $j\in\{1,\ldots ,m\}$, where $0<m<F_{n-1}$. From
proposition \ref{prop433}, we have that $S_{k-1,i}=0$, for all
$i\in\{0,\ldots,F_n-1\}$. Hence, for all $j=1,\ldots,m$, we have
that

\begin{center}
$\displaystyle\sum_{i=F_n}^{F_n+j}S_{F_n+j-1,i}v_i=\lambda
v_{F_n+j-1}$.
\end{center}

From proposition \ref{prop432} ($S_{i,j}=S_{i-F_n,j-F_n}$) we deduce that, for all $j=1,\ldots,m$,
\begin{equation}
\lambda
v_{F_n+j-1}=\displaystyle\sum_{i=F_n}^{F_n+j}S_{F_n+j-1,i}v_i=
\sum_{i=F_n}^{F_n+j}S_{j-1,i-F_n}v_i=
\sum_{l=0}^jS_{j-1,l}v_{l+F_n}. \label{45}
\end{equation}

Let $(w_i)_{0\leq i\leq j}$ defined by $w_i=v_{F_n+i}$. Hence,
$\sum_{i=0}^jS_{j-1,i}w_i=\lambda w_{j-1}$, for $j\in\{1,\ldots,m\}$.

Therefore, $w_m=\beta_m w_0$, i.e. $v_{m+F_n}=\beta_mv_{F_n}$.
Since $v_{F_n}=\beta_{F_n}v_0$ and $v_{F_n+m}=\beta_{F_n+m}v_0$, it
follows that $\beta_{F_n+m}=\beta_m\beta_{F_n}$.
\end{proof}

\np \textbf{Proof of the theorem \ref{teorema431}:} Let $\lambda$ be
an eigenvalue of $S$ associated to the eigenvector $v=(v_i)_{i\geq
0}\in l^\infty (\mathbb{N})$. Hence, from lemma \ref{lema432} we
have that for all nonnegative integer $k\geq 1$, there exists a complex
number $\beta_k=\beta_k(\overline{p},\lambda)\in\mathbb{C}$ such
that
\begin{equation}
v_k=\beta_k v_0.  \label{43}
\end{equation}
\np \textbf{Claim:} $\beta_k=q_k$, for all $k\geq 1$.

Put $k=F_{2n-1}$ on relation (\ref{42}). Using the fact that
$F_{2n-1}-1=\displaystyle\underbrace{1010\ldots
10}_{2n-2}1=(10)^{n-1}1$ and the item $(iv)$ from proposition
\ref{prop431}, we obtain, for all nonnegative integer $n\geq 1$,

\begin{center}
$(1-p_1)v_{F_{2n-1}-1}+\displaystyle\sum_{i=1}^np_1p_2\ldots
p_i(1-p_{i+1})v_{F_{2n-1}-F_{2i-1}}+p_1p_2\ldots
p_{n+1}v_{F_{2n-1}}=\lambda v_{F_{2n-1}-1}$,
\end{center}
\np i.e.
\begin{equation}
v_{F_{2n-1}}=\frac{-1}{p_1\ldots
p_{n+1}}\left((1-p_1-\lambda)v_{F_{2n-1}-1}+\displaystyle\sum_{i=1}^n
p_1\ldots p_i(1-p_{i+1})v_{F_{2n-1}-F_{2i-1}}\right). \label{48c}
\end{equation}
From relation (\ref{48c}), changing $n$ by $n+1$, we obtain
\begin{equation}
v_{F_{2n+1}}=\frac{-1}{p_1\ldots
p_{n+2}}\left((1-p_1-\lambda)v_{F_{2n+1}-1}+\displaystyle\sum_{i=1}^{n+1}
p_1\ldots p_i(1-p_{i+1})v_{F_{2n+1}-F_{2i-1}}\right). \label{48}
\end{equation}
Hence, we have
$v_{F_{2n+1}}=\left(1-\frac{1}{p_{n+2}}\right)v_0+\frac{1}{p_{n+2}}A_{F_{2n+1}}$,
where $A_{F_{2n+1}}$ is given by:

\begin{center}
$\frac{-1}{p_1\ldots
p_{n+1}}\left((1-p_1-\lambda)v_{F_{2n-1}+F_{2n}-1}+
\displaystyle\sum_{i=1}^n p_1\ldots
p_i(1-p_{i+1})v_{F_{2n-1}+F_{2n}-F_{2i-1}}\right)$.
\end{center}

From relation (\ref{43}) and lemma \ref{lema432}, we have

\begin{center}
$v_{F_{2n-1}+F_{2n}-1}=\beta_{F_{2n-1}+F_{2n}-1}v_0=\beta_{F_{2n-1}-1}\beta_{F_{2n}}v_0=\beta_{F_{2n}}v_{F_{2n-1}-1}$.
\end{center}

We also obtain $v_{F_{2n-1}+F_{2n}-F_{2i-1}}= \beta_{F_{2n}}v_{F_{2n-1}-F_{2i-1}}$,
for all $i\in\{1,\ldots ,n\}$.

Hence, from relations (\ref{48c}) and (\ref{48}) we obtain
$A_{F_{2n+1}}=\beta_{F_{2n}}v_{F_{2n-1}}=\beta_{F_{2n}}\beta_{F_{2n-1}}v_0$.

Therefore, we conclude that $\beta_{F_{2n+1}}=\frac{1}{p_{n+2}}\beta_{F_{2n}}\beta_{F_{2n-1}}-\left(\frac{1}{p_{n+2}}-1\right)$,
for all $n\geq 1$.

The case $k=F_{2n}$ can be done by the same way.

On the other hand, it's easy to check that $v_{F_0}=q_{F_0}v_0$ and
$v_{F_1}=q_{F_1}v_0$, i.e.

\begin{center}
$\beta_{F_0}=-\frac{1-\lambda-p_1}{p_1}=q_{F_0}$ \hspace{.3cm} and
\hspace{.3cm}
$\beta_{F_1}=\frac{1}{p_2}q_{F_0}^2-\left(\frac{1}{p_2}-1\right)=q_{F_1}
$.
\end{center}

Therefore, $\beta_n=q_n$, for all $n\geq 1$.

Hence, we have
$\sigma_{pt}(S)=\{\lambda\in\mathbb{C}:(q_n(\lambda))_{n\in\mathbb{N}}
\textrm{ is bounded}\}$ and we are done.

\rightline{$\Box$}

\begin{Conjecture}
There exists $0<d<1$ such that if $p_i>d$, for all $i\geq 1$ then
$\sigma_{pt}(S)=E$. \label{conjec}
\end{Conjecture}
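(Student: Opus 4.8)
\noindent\textbf{A strategy towards Conjecture~\ref{conjec}.}
By Theorem~\ref{teorema431} and Remark~\ref{obslazara} the inclusion $\sigma_{pt}(S)\subseteq E$ holds with no hypothesis on $\overline{p}$, since if $(q_n(\lambda))_{n\geq0}$ is bounded then so is its subsequence $(q_{F_n}(\lambda))_{n\geq0}$. The conjecture is therefore the reverse inclusion $E\subseteq\sigma_{pt}(S)$, and the plan is first to reduce it to a statement about the subsequence alone. If $n=\sum_{i\in I}F_i$ is the Zeckendorf expansion of $n$, then by Definition~\ref{defi431} we have $q_n(\lambda)=\prod_{i\in I}q_{F_i}(\lambda)$, whence
\[
|q_n(\lambda)|\ \leq\ \prod_{i\in I}\max\bigl(1,|q_{F_i}(\lambda)|\bigr)\ \leq\ \exp\Bigl(\sum_{i\geq0}\log^+|q_{F_i}(\lambda)|\Bigr).
\]
Since $\log^+|q|\leq|q-1|$ whenever $|q|\geq1$, to conclude $\lambda\in\sigma_{pt}(S)$ it suffices to show that $q_{F_n}(\lambda)\to1$ with $\sum_n|q_{F_n}(\lambda)-1|<+\infty$. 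Thus the whole problem reduces to the claim: \emph{if $p_i>d$ for all $i$ and $\lambda\in E$, then $(q_{F_n}(\lambda))_n$ converges to $1$ at a uniform geometric rate.}

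To attack this I would analyse the non-autonomous system $\psi_n=g_n\circ\cdots\circ g_0$ of Remark~\ref{obslazara} near its common fixed point $(1,1)$; by the definitions, $g_n(1,1)=(1,1)$ and $\psi_n(z,z)=(q_{F_n}(z),q_{F_{n-1}}(z))$ for $n\geq1$. Setting $a_n=q_{F_n}(\lambda)-1$, the recursion of Definition~\ref{defi431} gives, for $n\geq2$,
\[
a_n=\frac{1}{r_n}\bigl(a_{n-1}+a_{n-2}+a_{n-1}a_{n-2}\bigr),\qquad r_n=p_{[\frac{n+1}{2}]+1}\in(d,1].
\]
Its linear part is driven by the $2\times2$ matrices with first row $(1/r_n,\,1/r_n)$ and second row $(1,\,0)$, whose eigenvalues $\mu_\pm(r_n)=\tfrac{1}{2r_n}\bigl(1\pm\sqrt{1+4r_n}\bigr)$ satisfy $\mu_+(r)>1/r\geq1$ and $|\mu_-(r)|=\bigl(r\,\mu_+(r)\bigr)^{-1}=\frac{2}{1+\sqrt{1+4r}}\leq\frac{2}{1+\sqrt{1+4d}}<1$ for every $r\in(d,1]$; so $(1,1)$ is a hyperbolic saddle, uniformly in the admissible parameters. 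A Hadamard--Perron (graph transform) argument, uniform in $r_n\in(d,1]$, should then produce a fixed neighbourhood $U\ni(1,1)$ and a non-autonomous local stable manifold inside $U$, with the property that any orbit that remains in $U$ for all times lies on it and satisfies $|a_n|\leq C\rho^{n}$ for some constant $C$ and $\rho:=\frac{2}{1+\sqrt{1+4d}}<1$; in particular $\sum_n|a_n|<+\infty$ along such orbits.

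The remaining step, which I expect to be the main obstacle, is an \emph{escape dichotomy}: for $d$ sufficiently close to $1$, any orbit $(\psi_n(\lambda,\lambda))$ that is not eventually confined to $U$ must have $|q_{F_n}(\lambda)|\to\infty$; consequently a bounded orbit (i.e. $\lambda\in E$) is eventually confined to $U$, hence lands on the local stable manifold, and the claim above follows. For $p_i\equiv1$ this is transparent, since then $q_{F_n}(\lambda)=\lambda^{F_n}$ and $q_n(\lambda)=\lambda^{n}$, so $E=\sigma_{pt}(S)=\overline{\mathbb D}$; the idea for general $\overline{p}$ is to regard each fibre map $g_n$ as a uniformly small perturbation of the corresponding degree-two map and to transport the non-escaping behaviour of the quadratic Julia sets through the composition. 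The difficulty is to make ``uniformly small perturbation'' quantitative enough to produce an explicit threshold $d$, while simultaneously (i) handling the genuinely non-autonomous composition $g_n\circ\cdots\circ g_0$ rather than iteration of a single map, and (ii) dominating the quadratic error $\tfrac{1}{r_n}a_{n-1}a_{n-2}$ against the expansion in the unstable direction, so as to exclude bounded orbits off the stable manifold. That such a threshold is genuinely needed is to be expected: for small $d$, period-doubling of the fibre dynamics creates additional bounded orbits off the stable manifold, along which $|q_{F_n}(\lambda)|$ stays away from $1$ while $\sup_n|q_n(\lambda)|=+\infty$, forcing $\sigma_{pt}(S)\subsetneq E$ — which is precisely why the hypothesis $p_i>d$ appears and why the statement remains conjectural. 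Granting the escape dichotomy, the argument closes at once: for $\lambda\in E$ one gets $\sum_n\log^+|q_{F_n}(\lambda)|\leq\sum_n|a_n|<+\infty$, hence $(q_n(\lambda))_n$ is bounded by the first displayed inequality, hence $\lambda\in\sigma_{pt}(S)$; together with the unconditional reverse inclusion this yields $\sigma_{pt}(S)=E$.
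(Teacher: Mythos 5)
Two preliminary remarks. First, the statement you are proving is stated in the paper as a conjecture: the paper offers no proof (it only cites, from \cite{cm}, the partial result $E\cap\mathbb{R}=\sigma_{pt}(S)\cap\mathbb{R}$ for constant $p_i=p$ with $\frac{-1+\sqrt5}{2}<p<1$), so there is no argument to compare yours with, and your text is in any case an explicitly incomplete strategy (you yourself leave the ``escape dichotomy'' unproven). Second, the parts of your proposal that are correct are the easy ones: the unconditional inclusion $\sigma_{pt}(S)\subseteq E$ follows from Theorem~\ref{teorema431} and Remark~\ref{obslazara} exactly as you say; the sufficient criterion ``$\sum_n|q_{F_n}(\lambda)-1|<+\infty$ implies $(q_n(\lambda))_n$ bounded, hence $\lambda\in\sigma_{pt}(S)$'' is valid; and your eigenvalue computation exhibiting $(1,1)$ as a uniformly hyperbolic saddle of the fibre maps is right.

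The genuine gap is that your central reduction, and with it the escape dichotomy, is false, and the case $p_i\equiv1$ that you invoke as ``transparent'' is already a counterexample to both. There $q_{F_n}(\lambda)=\lambda^{F_n}$ and $E=\sigma_{pt}(S)=\overline{\mathbb{D}}$; but for every $\lambda$ with $|\lambda|<1$ the orbit $\psi_n(\lambda,\lambda)$ converges to $(0,0)$, not to $(1,1)$, so $\sum_n|q_{F_n}(\lambda)-1|=+\infty$, and for generic $\lambda$ on the unit circle $(q_{F_n}(\lambda))_n$ does not converge at all. These $\lambda$ belong to $\sigma_{pt}(S)$ not because the Zeckendorf factors tend to $1$ summably, but because all factors have modulus at most $1$. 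Hence ``$\lambda\in E$ implies $q_{F_n}(\lambda)\to1$ geometrically'' is simply not true, even with $p_i>d$ for $d$ arbitrarily close to $1$; correspondingly, an orbit that never enters a small neighbourhood $U$ of the saddle $(1,1)$ need not satisfy $|q_{F_n}(\lambda)|\to\infty$ --- it can remain in the bounded attracting region near the origin of the fibred dynamics. Your mechanism therefore captures only the thin subset of $E$ whose orbits land on the non-autonomous stable manifold of $(1,1)$, while the actual content of Conjecture~\ref{conjec} --- controlling the products $q_n=\prod_{i\in I}q_{F_i}$ over Zeckendorf expansions when $(q_{F_n}(\lambda))_n$ is bounded but contains terms of modulus larger than $1$ and does not tend to $1$ --- is left untouched. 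So the proposal does not prove the conjecture, and its main intermediate claims would have to be replaced, not merely completed.
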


\begin{Remark}
In \cite{cm}, we prove a particular
case of conjecture \ref{conjec}. Particulary we prove that if
$p_i=p$, for all $i\geq 1$, and $\frac{-1+\sqrt{5}}{2}<p<1$ then
$E\cap \mathbb{R}=\sigma_{pt}(S)\cap\mathbb{R}$.
\end{Remark}

\begin{Theorem}
In $l^\infty(\mathbb{N})$, the set $E$ is contained in $\sigma_a(S)$. In particular $E\subset\sigma(S)$.
\end{Theorem}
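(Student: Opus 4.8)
The plan is to exhibit approximate eigenvectors for every $\lambda\in E$ by truncating the (in general unbounded) formal eigenvector $(q_n(\lambda))_{n\geq 0}$ constructed in the proof of Theorem~\ref{teorema431}. Fix $\lambda\in E$, so that $C:=\sup_{n\geq 0}|q_{F_n}(\lambda)|<\infty$, and put $q_0:=1$. If the full sequence $(q_n(\lambda))_{n\geq 0}$ happens to be bounded, then $\lambda\in\sigma_{pt}(S)$ by Theorem~\ref{teorema431}, and an eigenvalue is automatically an approximate eigenvalue (use the constant sequence $x_n=v/\|v\|_\infty$ for an eigenvector $v$); so we may assume from now on that $(q_n(\lambda))_{n\geq 0}$ is unbounded.

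For each $N\geq 1$ set $x^{(N)}:=(q_0,q_1,\dots,q_{F_N-1},0,0,\dots)\in l^\infty(\mathbb{N})$ and $R_N:=\|x^{(N)}\|_\infty=\max_{0\leq k\leq F_N-1}|q_k|$. Since $q_0=1$ we have $R_N\geq 1$, and since $\bigcup_N\{0,\dots,F_N-1\}=\mathbb{N}$ while $(q_n)$ is unbounded, $R_N\to+\infty$. I would then estimate $(\lambda I-S)x^{(N)}$ component by component, splitting $\mathbb{N}$ into three ranges. First, for $0\leq k\leq F_N-2$: since $S_{k,i}=0$ for $i\geq k+2$ (Proposition~\ref{prop431}), the $k$-th row only sees the coordinates $x^{(N)}_0,\dots,x^{(N)}_{k+1}$, which equal $q_0,\dots,q_{k+1}$; as $(q_n)$ is a componentwise solution of $S u=\lambda u$ (this is precisely the recursion solved in the proof of Theorem~\ref{teorema431}), the $k$-th component of $(\lambda I-S)x^{(N)}$ is $0$. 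Second, for $k=F_N-1$: only the term $i=F_N$ is discarded, so that component equals $S_{F_N-1,F_N}\,q_{F_N}$, of modulus at most $|q_{F_N}|\leq C$. Third, for $k\geq F_N$: every nonzero entry of $x^{(N)}$ has index in $\{0,\dots,F_N-1\}$, and Proposition~\ref{prop433} gives $S_{k,i}=0$ whenever $0<i<F_N\leq k$, so the $k$-th component collapses to $-S_{k,0}q_0=-S_{k,0}$, of modulus at most $1$. Hence $\|(\lambda I-S)x^{(N)}\|_\infty\leq\max\{C,1\}$.

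Now $x_N:=x^{(N)}/R_N$ is a unit vector in $l^\infty(\mathbb{N})$ with $\|(\lambda I-S)x_N\|_\infty\leq\max\{C,1\}/R_N\to 0$, so $\lambda\in\sigma_a(S)$. This proves $E\subseteq\sigma_a(S)$, and since $\sigma_a(S)\subseteq\sigma(S)$ we also obtain $E\subseteq\sigma(S)$, as claimed.

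The step I expect to be the main obstacle is the third range above: it is exactly the block-triangular structure of $S$ relative to the Fibonacci intervals (Proposition~\ref{prop433}) that prevents the rows $k\geq F_N$ from contributing a term of size comparable to $R_N$, which would otherwise ruin the estimate. The only other point worth care is the elementary remark that $\lambda\in E\setminus\sigma_{pt}(S)$ forces the truncation norms $R_N$ to diverge, which is what makes the normalized residuals vanish in the limit.
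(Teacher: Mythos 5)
Your proof is correct and follows essentially the same route as the paper: both truncate the formal eigenvector $(q_n(\lambda))$ at a Fibonacci scale, use Propositions~\ref{prop431} and~\ref{prop433} to bound the residual of the truncated vector by a constant involving the bounded quantity $|q_{F_N}(\lambda)|$, and conclude because the sup-norm of the truncation diverges when $(q_n(\lambda))$ is unbounded. The only (harmless) differences are that you cut at index $F_N-1$ instead of $F_N$ and that you spell out the case $\lambda\in\sigma_{pt}(S)$ explicitly.
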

\begin{proof} Let $\lambda\in E$ and suppose that
$\lambda\notin\sigma_{pt}(S)$. We will prove that
$\lambda\in\sigma_a(S)$. In fact, for each $k\geq 2$, consider

\begin{center}
$w^{(k)}=(w^{k}_0,w^{k}_1,w^{k}_2,\ldots,w^{k}_k,w^{k}_{k+1},w^{k}_{k+2},\ldots)^t=
(1,q_1(\lambda),q_2(\lambda),\ldots,q_k(\lambda),0,0,\ldots)^t$,
\end{center}

\np where $(q_n(\lambda))_{n \geq 1}=(q_n)_{n \geq 1}$ is the
sequence defined before. Define $u^{(k)}:=\frac{w^{(k)}}{\lVert
w^{(k)}\rVert_{\infty}}$.

\np \textbf{Claim:} $\displaystyle\lim_{n\to+\infty}\lVert(S-\lambda
I)u^{(F_n)}\rVert_\infty=0$.

\np In fact, for all $i\in\{0,\ldots,k-1\}$, we have $((S-\lambda
I)u^{(k)})_i=0$ and $u_i=0$, for all $i>k$. Hence, note that
\begin{center}
$\|(S-\lambda I)u^{(k)}\|_\infty= \displaystyle\sup_{i\geq
0}\left|\sum_{j=0}^{+\infty}(S-\lambda I)_{ij}u_j^{(k)}\right|=
\displaystyle\sup_{i\geq
k}\left\{\frac{\left|\sum_{j=0}^k(S-\lambda
I)_{ij}w_j^{(k)}\right|}{\| w^{(k)}\|_\infty}\right\}$.
\end{center}

If $k=F_n$ then, for $i\geq k=F_n$, we have:

\np $a)$ If $i=F_n$ then $S_{i,j}=0$, for all $j\in\{0,\ldots,
F_n-1\}$, and $S_{F_n,F_n}=1-p_1$. Therefore,
$\left|\sum_{j=0}^{F_n}(S-\lambda
I)_{ij}w_j^{(F_n)}\right|=|1-p_1-\lambda||q_{F_n}|$.

\np $b)$ If $F_n<i<F_{n+1}-1$ then $S_{i,j}=0$, for all
$j\in\{0,\ldots, F_n-1\}$, and $S_{i,j}\leq p_1$, for $j=F_n$.
Therefore, $\left|\sum_{j=0}^{F_n}(S-\lambda I)_{ij}w_j^{(F_n)}\right|\leq
p_1|q_{F_n}|$.

\np $c)$ If $i=F_{n+1}-1$ then $S_{i,j}=0$, for all
$j\in\{1,\ldots, F_n-1\}$, and $S_{i,j}\leq p_1$, for $j=0,F_n$. Therefore,
$\left|\sum_{j=0}^{F_n}(S-\lambda I)_{ij}w_j^{(F_n)}\right|\leq
p_1+p_1|q_{F_n}|$.

\np $d)$ If $i\geq F_{n+1}$ then $S_{i,j}=0$, for all
$j\in\{1,\ldots, F_n\}$, and $S_{i,j}\leq p_1$, for $j=0$. Therefore,
$\left|\sum_{j=0}^{F_n}(S-\lambda I)_{ij}w_j^{(F_n)}\right|\leq
p_1$.

Hence, from $a)$, $b)$, $c)$ and $d)$ it follows that
\begin{equation}
\|(S-\lambda I)u^{(F_n)}\|_\infty\leq
\frac{|1-p_1-\lambda||q_{F_n}|+p_1|q_{F_n}|+p_1}{\|
w^{(F_n)}\|_\infty}. \label{desi}
\end{equation}
Since $\lambda\in E$ and $\lambda\notin\sigma_{pt}(S)$ if follows
that $(q_{F_n})_{n\geq 0}$ is a bounded sequence and $(q_n)_{n\geq
0}$ is not. Therefore, we have $\lim_{n\to+\infty}\lVert
w^{(F_n)}\rVert_\infty=+\infty$, which
implies from relation \ref{desi} that
$\displaystyle\lim_{n\to+\infty}\lVert(S-\lambda
I)u^{(F_n)}\rVert_\infty=0$. Therefore,
$\lambda\in\sigma_a(S)\subset\sigma(S)$.
\end{proof}

\begin{Conjecture}
$E=\sigma_a(S)$.
\end{Conjecture}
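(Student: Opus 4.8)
Since $E\subseteq\sigma_a(S)$ is the theorem just proved, I would reduce the conjecture to the reverse inclusion $\sigma_a(S)\subseteq E$, i.e. to showing that $\lambda I-S$ is bounded below on $l^\infty(\mathbb N)$ whenever $\lambda\notin E$. The plan is to take an approximate eigensequence and push it through the recursion defining the $q_n$ (relation (\ref{42}) with $v_i=q_i$), together with the self-similar block structure of $S$ from Propositions \ref{prop432} and \ref{prop433}, in order to force $(q_{F_n}(\lambda))_{n\ge0}$ to be bounded, i.e. $\lambda\in E$.

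First I would record an easy dichotomy. Suppose $\lambda\in\sigma_a(S)$ and $x^{(m)}\in l^\infty(\mathbb N)$ with $\lVert x^{(m)}\rVert_\infty=1$ and $\varepsilon_m:=\lVert(\lambda I-S)x^{(m)}\rVert_\infty\to0$; write $y^{(m)}=(\lambda I-S)x^{(m)}$. Since $S_{k,k+1}>0$ for all $k$ (every state increments with positive probability, Proposition \ref{prop431}) and $S_{k,j}=0$ for $j>k+1$, the identity $\sum_{i=0}^{k}S_{k-1,i}x^{(m)}_i=\lambda x^{(m)}_{k-1}-y^{(m)}_{k-1}$ solves recursively for $x^{(m)}_k$; subtracting $x^{(m)}_0$ times the analogous identity for $(q_i)_i$ and putting $e^{(m)}_k:=x^{(m)}_k-q_k(\lambda)x^{(m)}_0$, one gets $e^{(m)}_0=0$ and a recursion for $e^{(m)}_k$ with inhomogeneity $-y^{(m)}_{k-1}$, so that by induction $\lvert x^{(m)}_k-q_k(\lambda)x^{(m)}_0\rvert\le C_k\,\varepsilon_m$ with $C_k=C_k(\lambda,\overline p)$ independent of $m$. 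If $\limsup_m\lvert x^{(m)}_0\rvert=:\delta>0$, then along a subsequence with $\lvert x^{(m)}_0\rvert\to\delta$ the bound $\lvert x^{(m)}_k\rvert\le1$ forces $\lvert q_k(\lambda)\rvert\le1/\delta$ for every $k$, so $(q_k(\lambda))_k$ is bounded and $\lambda\in\sigma_{pt}(S)\subseteq E$. So the remaining case is $x^{(m)}_0\to0$, in which $x^{(m)}_k\to0$ for every fixed $k$ although $\lVert x^{(m)}\rVert_\infty=1$: the mass of $x^{(m)}$ escapes to infinity.

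The real work is to rule out this escaping case when $\lambda\notin E$. I would pick $k_m$ with $\lvert x^{(m)}_{k_m}\rvert\ge1/2$, so $k_m\to\infty$, and let $n_m$ be given by $F_{n_m}\le k_m<F_{n_m+1}$. By Proposition \ref{prop433} the chain cannot move from a state $\ge F_{n_m}$ to any state in $\{1,\dots,F_{n_m}-1\}$, and by Proposition \ref{prop432} the block $[F_{n_m},F_{n_m+1})$ carries an exact copy of the transition structure on $[0,F_{n_m-1})$; truncating $x^{(m)}$ to this block and translating by $F_{n_m}$ gives a vector $z^{(m)}$ of sup-norm $\ge1/2$ that is an approximate eigenvector of the transition operator on $[0,F_{n_m-1})$, up to two boundary contributions: the mass block $n_m$ sends to state $0$ (which tends to $0$ with $\lvert x^{(m)}_0\rvert$, uniformly over the rows) and the coupling to block $n_m+1$ through the single edge $F_{n_m+1}-1\to F_{n_m+1}$. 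Granting that these boundary terms can be absorbed, the recursion in the proof of Theorem \ref{teorema431} forces $z^{(m)}$ to be proportional, up to a small error, to the string $(q_0,q_1,\dots,q_{F_{n_m-1}-1})$; feeding this into the two-step recursion $q_{F_n}=\tfrac{1}{r_n}q_{F_{n-1}}q_{F_{n-2}}-\bigl(\tfrac{1}{r_n}-1\bigr)$ and using the standard escape estimate (once $\lvert q_{F_{n-1}}\rvert,\lvert q_{F_{n-2}}\rvert$ are large, $\lvert q_{F_n}\rvert$ grows rapidly) one concludes that $z^{(m)}$ cannot keep bounded sup-norm with a small defect unless $(q_{F_n}(\lambda))_n$ is bounded, i.e. unless $\lambda\in E$ — contradicting $\lambda\notin E$. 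A tidier packaging of the same idea would be to let $c_n$ be the optimal constant in $\lVert(\lambda I-S)x\rVert_\infty\ge c_n\lVert x\rVert_\infty$ over $x$ supported on $[0,F_n)$, relate $c_n$ to $c_{n-1},c_{n-2}$ via the self-similarity and the $q_{F_n}$-recursion, and conclude $\inf_n c_n>0$ from the escape of $(q_{F_n}(\lambda))$.

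The main obstacle is precisely this boundary analysis: the coupling of block $n_m$ to the next block yields an $O(1)$ defect in a single coordinate of $z^{(m)}$ that does not obviously vanish, so one must either choose the cutting scale where $x^{(m)}$ happens to be small, or work with a window of several consecutive blocks and telescope. Nothing like this is needed in the base-$2$ setting of \cite{kt1} and \cite{msv}, where the dyadic self-similarity is cleaner; that is why the equality is only a conjecture here. An alternative, stronger line of attack would be to prove $\sigma(S)=E$ directly by constructing, for $\lambda\notin E$, a bounded two-sided inverse of $\lambda I-S$ from the forward recursion with $x_0=0$, which reduces to summability bounds on the corresponding Green kernel that again depend on the escape rate of $(q_{F_n}(\lambda))_n$.
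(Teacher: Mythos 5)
The statement you are addressing is left as a \emph{conjecture} in the paper: the author only proves the inclusion $E\subseteq\sigma_a(S)$ (the theorem immediately preceding it) and offers no argument for $\sigma_a(S)\subseteq E$, so there is no ``paper proof'' to compare against, and your text must be judged as a self-contained attempt. As such, it is a strategy outline rather than a proof, and the two steps you flag but do not carry out are exactly where the difficulty lies. First, the boundary coupling: after truncating $x^{(m)}$ to the block $[F_{n_m},F_{n_m+1})$, the row $F_{n_m+1}-1$ feeds mass of size up to $S_{F_{n_m+1}-1,F_{n_m+1}}\cdot 1$ into the defect, an $O(1)$ quantity that does not shrink with $\varepsilon_m$; your suggestions (cut where $x^{(m)}$ is small, or telescope over several blocks) are not implemented, and it is not clear either can work, since an approximate eigenvector may concentrate near the top of every block at every scale. ``Granting that these boundary terms can be absorbed'' is precisely the missing idea, not a technicality.

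Second, even inside a single block the quantitative step is unjustified. The recursion that produces $e^{(m)}_k:=x^{(m)}_k-q_k(\lambda)x^{(m)}_0$ solves for $x^{(m)}_k$ by dividing by the entry $S_{k-1,k}$, which by Proposition \ref{prop431} is a product $p_1\cdots p_{s+1}$ that can be very small, and it also multiplies previously accumulated errors by quantities comparable to $|q_j(\lambda)|$, which are \emph{large} exactly in the regime $\lambda\notin E$ you want to exclude. So the constants $C_k$ grow with $k$ (typically super-exponentially), and the bound $|e^{(m)}_k|\le C_k\varepsilon_m$ is only useful for fixed $k$ as $m\to\infty$; it does not give ``$z^{(m)}$ proportional to $(q_0,\dots,q_{F_{n_m-1}-1})$ up to a small error'' uniformly over a block whose length $F_{n_m-1}$ grows with $m$. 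Without a uniform (in the block length) stability estimate for this inhomogeneous recursion, the contradiction with $\|z^{(m)}\|_\infty\ge 1/2$ does not follow, and the final escape argument (and likewise the Green-kernel variant you mention) never gets off the ground. In short: your first dichotomy (the case $\limsup_m|x^{(m)}_0|>0$ forcing $\lambda\in\sigma_{pt}(S)\subseteq E$) is sound, but the escaping-mass case — which is the whole content of the conjecture — remains open in your write-up, in line with the paper leaving the statement as a conjecture.
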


\subsection{Generalization}

Let $d> 2$ a integer number and let us consider the sequence $(F_n)_{n\geq 0}$ given by $F_n=a_1 F_{n-1}+\ldots+a_d F_{n-d}$, for all $n\geq d$, with initial conditions $F_0=1$ and $F_n=a_1 F_{n-1}+\ldots +a_n F_0+1$, for all $n\in\{1,\ldots, d-1\}$, where $a_i$ are nonnegative integers, for $i\in\{1,\ldots, d\}$, satisfying $a_1\geq a_2\geq\ldots\geq a_d\geq 1$.

By using the greedy algorithm we can write every nonnegative integer $N$, in a unique way, as $N=\sum_{i=0}^{k(N)}\varepsilon_i(N)F_i$ where the digits $\varepsilon_j(N)$ satisfy the relation
$\varepsilon_i\varepsilon_{i-1}\ldots\varepsilon_{i-d+1}<_{lex}a_1a_2\ldots a_d$, for all $i\geq d-1$.

It is known that the addition of $1$ in base $(F_n)_{n\geq 0}$ is given by a finite
transducer. By using the same construction as was done in Fibonacci base, we can define the stochastic adding machine associated to the sequences $(F_n)_{n\geq 0}$ and $\overline{p}=(p_i)_{i\geq 1}$ and we can also prove that
$\sigma_{pt}(S_{\overline{p}})=\{\lambda\in\mathbb{C}:(q_n(\lambda))_{n\geq
0}\textrm{ is bounded}\}$, where $q_{F_i}(\lambda)$ are polynomials fixed in $\lambda$ for all $i\in\{0, \ldots,d-1\}$,

\begin{center}
$q_{F_{dn+i}}(\lambda)=\frac{1}{p_{n+1+i}}q_{F_{dn+i-1}}^{a_1}(\lambda)q_{F_{dn+i-2}}^{a_2}(\lambda)\ldots
q_{F_{dn+i-d}}^{a_d}(\lambda)-\left(\frac{1}{p_{n+1+i}}-1\right)$
\end{center}

\np for all $n\geq 1$ and for each $i\in\{0,1,\ldots,d-1\}$, and $q_N(\lambda)=q_{F_0}^{\varepsilon_0}(\lambda)\ldots
q_{F_n}^{\varepsilon_{n}}(\lambda)$, where
$N=\sum_{i=0}^{n}\varepsilon_iF_i$.

In particular, if $a_1=a_2=\ldots=a_d=1$ then $q_{F_0}(\lambda)=\frac{\lambda-(1-p_1)}{p_1}$ and $q_{F_i}(\lambda)=\frac{1}{p_{i+1}}(q_{F_{i-1}}(\lambda))^2-\left(\frac{1}{p_{i+1}}-1\right)$, for all $i\in\{1,\ldots,d-1\}$.

\section{Topological properties of the stochastic Fibonacci adding machine}

\begin{Theorem} Suppose there exists $\delta>0$ such that $p_i>\delta$,
for all $i\geq 0$. Then $E$ is compact and $\mathbb{C}\setminus E$
is connected. \label{teoremacomp}
\end{Theorem}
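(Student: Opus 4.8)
The plan is to show that $E$ is a bounded, closed subset of $\mathbb{C}$ and that its complement has no bounded components. Recall $E=\{z\in\mathbb{C}:(\psi_n(z,z))_{n\geq 0}\text{ is bounded}\}$ where $\psi_n=g_n\circ\cdots\circ g_0$, with $g_0(x,y)=\left(\frac{x-(1-p_1)}{p_1},\frac{y-(1-p_1)}{p_1}\right)$ and $g_n(x,y)=\left(\frac{1}{r_n}xy-\left(\frac{1}{r_n}-1\right),x\right)$, and $r_n=p_{[\frac{n+1}{2}]+1}>\delta$. Equivalently, writing $a_n(z)$ for the first coordinate of $\psi_n(z,z)$ (so $a_n=q_{F_n}(z)$ in the notation of Definition \ref{defi431}), we have $a_0=\frac{z-(1-p_1)}{p_1}$, $a_1=\frac{1}{p_2}a_0^2-(\frac1{p_2}-1)$, and $a_n=\frac{1}{r_n}a_{n-1}a_{n-2}-(\frac{1}{r_n}-1)$ for $n\geq 2$, and $z\in E$ iff $(a_n(z))_n$ is bounded. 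The key quantitative input is the uniform lower bound $r_n>\delta$, which lets all estimates be made uniform in the sequence $\overline p$.

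**Boundedness (an escape lemma).**

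First I would prove a uniform escape estimate: there exists $R>0$ (depending only on $\delta$) such that if $|a_n(z)|\geq R$ and $|a_{n-1}(z)|\geq R$ for some $n$, then $|a_{m}(z)|\to\infty$. The point is that the recursion $a_{n+1}=\frac{1}{r_{n+1}}a_n a_{n-1}-(\frac{1}{r_{n+1}}-1)$ satisfies, whenever $|a_n|,|a_{n-1}|\geq R$,
\[
|a_{n+1}|\geq \frac{1}{r_{n+1}}|a_n||a_{n-1}|-\frac{1}{r_{n+1}}\geq \frac{1}{r_{n+1}}|a_n|(|a_{n-1}|-1)\geq |a_n|(R-1),
\]
so choosing $R\geq 3$ already gives $|a_{n+1}|\geq 2|a_n|\geq R$ and $|a_{n+1}|\geq|a_n|$, hence by induction both consecutive terms stay above $R$ and $|a_m|\to\infty$ geometrically. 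Next one checks that for $|z|$ large, both $|a_0(z)|$ and $|a_1(z)|$ exceed $R$: indeed $|a_0(z)|\geq \frac{|z|-1}{p_1}\geq|z|-1$, and $|a_1(z)|\geq \frac{1}{p_2}|a_0|^2-\frac{1}{p_2}\geq |a_0|(|a_0|-1)$, both $\geq R$ once $|z|$ is large enough (uniformly, since $p_1,p_2\in(\delta,1]$). Therefore $E$ is contained in a fixed disc $\{|z|\leq R_0\}$, so $E$ is bounded.

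**Closedness and connectedness of the complement.**

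Closedness: each $a_n(z)=q_{F_n}(z)$ is a polynomial in $z$, hence continuous; by the escape lemma, $z\notin E$ iff there exists $n$ with $|a_n(z)|,|a_{n+1}(z)|>R$ (for the fixed $R$ above), and this is an open condition in $z$. Thus $\mathbb{C}\setminus E=\bigcup_n\{z:|a_n(z)|>R,\ |a_{n+1}(z)|>R\}$ is open, so $E$ is closed; combined with boundedness, $E$ is compact. For connectedness of $\mathbb{C}\setminus E$, I would argue as in the classical filled-Julia-set case. Set $U_n=\{z:|a_n(z)|>R\text{ and }|a_{n+1}(z)|>R\}$; then $\mathbb{C}\setminus E=\bigcup_{n\geq 0}U_n$ and $U_n\subset U_{n+1}$ (by the escape lemma, once two consecutive iterates are large they remain large). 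Each $U_n$ is (up to a controlled perturbation) a level set of a polynomial of degree $F_{n+1}$ — more precisely, the dominant term of $a_n(z)$ is a constant times $z^{F_n}$ — so for $R$ large each $U_n$ is an open neighborhood of $\infty$ whose complement $\mathbb{C}\setminus U_n$ is a union of finitely many compact "filled" pieces. The crucial claim is that $\mathbb{C}\setminus U_n$ is in fact connected for every $n$: this follows because $\mathbb{C}\setminus E\supset U_0\supset$ (neighborhood of $\infty$), and the preimage structure $a_{n+1}=\frac1{r_{n+1}}a_n a_{n-1}-c$ means $\mathbb{C}\setminus U_{n+1}$ is obtained from $\mathbb{C}\setminus U_n$ by a proper polynomial-type map with no critical values escaping — so no new bounded complementary components are created, exactly the mechanism behind "the Julia set of a polynomial with connected filled Julia set has connected complement." Since $\mathbb{C}\setminus E=\bigcup U_n$ is an increasing union of connected open neighborhoods of $\infty$, it is connected.

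**Main obstacle.**

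The routine parts are the escape lemma and compactness; the real work is the connectedness of $\mathbb{C}\setminus E$, because here we are iterating a non-autonomous family $g_n$ on $\mathbb{C}^2$ rather than a single polynomial on $\mathbb{C}$, so the standard one-variable argument ("$K(f)$ connected $\Rightarrow$ $\mathbb{C}\setminus K(f)$ connected") does not apply verbatim. The hard point is to track, inductively in $n$, that $\mathbb{C}\setminus U_n$ stays connected under passage to $\mathbb{C}\setminus U_{n+1}$; this requires controlling the critical points of the polynomial $z\mapsto a_{n+1}(z)$ — equivalently showing all of them lie in $\mathbb{C}\setminus U_n$ — and here the uniform bound $p_i>\delta$ is what keeps the geometry from degenerating. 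I expect this inductive step (verifying that the "filled" region remains connected at each stage, using the explicit quadratic-type form of the recursion and the $\delta$-bound) to be the technical heart of the argument; the rest is bookkeeping with polynomial degrees and the escape radius $R$.
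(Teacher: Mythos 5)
There are two genuine gaps, one in the compactness part and one in the connectedness part. First, your identity $\mathbb{C}\setminus E=\bigcup_n U_n$ with $U_n=\{z:|q_{F_n}(z)|>R \text{ and } |q_{F_{n+1}}(z)|>R\}$ is asserted to follow ``by the escape lemma'', but the escape lemma only gives the inclusion $\bigcup_n U_n\subset\mathbb{C}\setminus E$. The converse --- that an unbounded orbit must at some stage produce two consecutive values exceeding $R$ (equivalently, that a single value $>R$ already forces escape) --- is precisely the nontrivial point, and it is where the hypothesis $p_i>\delta$ does its real work: one must run a backward induction (the paper's Proposition \ref{prop441}) showing that if $|q_{F_k}(z)|>R$ while $|q_{F_{k+1}}(z)|\leq 1$, then $|q_{F_{k-1}}(z)|<1/R$, hence $|q_{F_{k-2}}(z)|>R\delta\left(R+1-\tfrac{1}{\delta}\right)>R$, cascading down to the fixed base values $q_{F_0}(z),q_{F_1}(z)$. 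Without this step you have neither the closedness of $E$ (so compactness is not yet established) nor the set identity on which your connectedness argument rests; your escape lemma alone does not yield it.

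Second, the connectedness argument is not carried out and is aimed at the wrong statement. For your increasing-union conclusion you need each $U_n$ to be connected, but you instead sketch that $\mathbb{C}\setminus U_n$ is connected; neither statement implies the other, and $U_n$ is the intersection of two polynomial sublevel-set complements, which need not be connected a priori (the compact set $\{|q_{F_n}|\leq R\}\cup\{|q_{F_{n+1}}|\leq R\}$ can enclose bounded components). Moreover the critical-point/Riemann--Hurwitz control you identify as ``the technical heart'' is unnecessary here. Once the single-value escape radius is available, one has $\mathbb{C}\setminus E=\bigcup_{n\geq 0}\{z:|q_{F_n}(z)|>R\}$, and each set $\{|q_{F_n}|>R\}$ is automatically connected by the maximum modulus principle: a bounded component would satisfy $|q_{F_n}|=R$ on its boundary and $|q_{F_n}|>R$ inside, a contradiction. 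Since all these sets contain a common neighbourhood of infinity, their union is connected. This is the paper's argument; critical points and Riemann--Hurwitz enter only later, in proving that $E$ itself (not its complement) can fail to be connected (Theorem \ref{teorema442}).
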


In definition \ref{defi431}, we have that for each $n\in\mathbb{N}$, $q_{F_n}:\mathbb{C}\longrightarrow\mathbb{C}$ are maps defined by
$q_{F_0}(z)=\frac{z-(1-p_1)}{p_1}$,
$q_{F_1}(z)=\frac{1}{p_2}(q_{F_0}(z))^2-\left(\frac{1}{p_2}-1\right)$ and
$q_{F_n}(z)=\frac{1}{r_n}q_{F_{n-1}}(z)q_{F_{n-2}}(z)-\left(\frac{1}{r_n}-1\right)$,
where $r_n=p_{\left[\frac{n+1}{2}\right]+1}$, for all $n\geq 1$ and the set $E$ is defined by $E:=\{z\in \mathbb{C}:(q_{F_n}(z))_{n\geq 0} \textrm{ is bounded}\}$.
 
Before to proof the theorem \ref{teoremacomp}, it will be necessary
the followings results:

\begin{Lemma} If there exists a nonnegative integer $k$ such that $|q_{F_n}(z)|>1$
for $n\in\{k,k+1\}$ then the sequence $(q_{F_n}(z))_{n\geq 0}$
is not bounded. \label{lema441}
\end{Lemma}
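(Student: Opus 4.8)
The plan is to show that once two consecutive terms of $(q_{F_n}(z))_{n}$ have modulus exceeding $1$, the moduli of all later terms are forced to grow at least geometrically, which makes unboundedness immediate. Throughout I write $c_n:=\frac{1}{r_n}$; since $r_n=p_{\left[\frac{n+1}{2}\right]+1}\in\,]0,1]$ we have $c_n\geq 1$, and the defining relation from Definition \ref{defi431} reads $q_{F_n}(z)=c_n\,q_{F_{n-1}}(z)\,q_{F_{n-2}}(z)-(c_n-1)$ for $n\geq 2$.

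The heart of the matter is the estimate: \emph{if $|q_{F_{n-1}}(z)|>1$ and $|q_{F_{n-2}}(z)|>1$, then $|q_{F_n}(z)|\geq|q_{F_{n-1}}(z)|\cdot|q_{F_{n-2}}(z)|$.} To see this, put $w:=q_{F_{n-1}}(z)q_{F_{n-2}}(z)$, so $|w|>1$, and apply the reverse triangle inequality to $q_{F_n}(z)=c_n w-(c_n-1)$:
$$|q_{F_n}(z)|\ \geq\ c_n|w|-(c_n-1)\ =\ c_n(|w|-1)+1\ \geq\ (|w|-1)+1\ =\ |w|,$$
where the last step uses $c_n\geq 1$ and $|w|\geq 1$. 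In particular $|q_{F_n}(z)|>1$ as well. Hence, starting from the hypothesis $|q_{F_k}(z)|>1$ and $|q_{F_{k+1}}(z)|>1$, a straightforward induction on $n$ shows $|q_{F_n}(z)|>1$ for every $n\geq k$, and consequently the displayed estimate is valid for every $n\geq k+2$.

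To conclude, set $\mu:=\min\{|q_{F_k}(z)|,|q_{F_{k+1}}(z)|\}>1$. A second easy induction (again invoking the estimate) gives $|q_{F_n}(z)|\geq\mu$ for all $n\geq k$, since for $n\geq k+2$ one has $|q_{F_n}(z)|\geq|q_{F_{n-1}}(z)|\,|q_{F_{n-2}}(z)|\geq\mu^2\geq\mu$. Feeding this back into the estimate yields $|q_{F_n}(z)|\geq\mu\,|q_{F_{n-1}}(z)|$ for all $n\geq k+2$, hence by telescoping $|q_{F_{k+1+j}}(z)|\geq\mu^{\,j+1}$ for all $j\geq 0$; since $\mu>1$ this tends to $\infty$, so $(q_{F_n}(z))_{n\geq 0}$ is unbounded. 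I expect the only non-routine point to be the key estimate — concretely, checking that the additive term $-(c_n-1)$ cannot pull the product back inside the unit disk — and this is exactly the place where $c_n=1/r_n\geq 1$ (equivalently $p_i\leq 1$) is used; the two inductions and the telescoping are purely mechanical. Note that, in contrast with Theorem \ref{teoremacomp}, no uniform lower bound on the $p_i$ is needed for this lemma.
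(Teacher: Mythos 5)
Your proof is correct and follows essentially the same route as the paper: the key step in both is the reverse triangle inequality combined with $1/r_n\geq 1$ (i.e.\ $p_i\leq 1$), showing the additive term $-(1/r_n-1)$ cannot pull the product back inside the unit disk, so the moduli grow without bound. The only difference is bookkeeping — the paper tracks the sharper Fibonacci-exponent growth $|q_{F_{k+l}}(z)|>A^{F_{l-1}}$, while you settle for a geometric lower bound $\mu^{j+1}$, which suffices equally well.
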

\begin{proof} Since $|q_{F_n}(z)|>1$ for $n\in\{k,k+1\}$ then there exists
a real number $A>1$ such that $|q_{F_n}(z)|>A$ for $n\in \{k,k+1\}$.
Hence, we have

\begin{center}
$|q_{F_{k+2}}(z)|\geq\frac{1}{r_{k+2}}|q_{F_{k+1}}(z)q_{F_k}(z)|-|\frac{1}{r_{k+2}}-|>
\frac{A^2}{r_{k+2}}-\frac{1}{r_{k+2}}+1$, i.e. $|f_{k+2}(z)|>A^2$
\end{center}

\np and likewise we have $|q_{F_{k+3}}(z)|> A^3$ and $|q_{F_{k+4}}(z)|>A^5$.

Continuing in this way, we conclude that $|q_{F_{k+l}}(z)|>A^{F_{l-1}}$, for all $l\in\mathbb{N}$, $l\geq 2$.

Since $A>1$, it follows that $(q_{F_n}(z))_{n\geq 0}$ is not bounded.
\end{proof}

\begin{Proposition} Suppose there exists $\delta>0$ such that $p_i>\delta$,
for all $i\geq 1$. Then there exists $R>1$ such that,
if $|q_{F_k}(z)|>R$ for some $k\in\mathbb{N}$ then
$(q_{F_n}(z))_{n\geq 0}$ is not bounded. \label{prop441}
\end{Proposition}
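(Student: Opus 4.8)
Proposition \ref{prop441} asserts that under a uniform lower bound $p_i > \delta$, there is an escape radius $R>1$: once some $|q_{F_k}(z)| > R$, the sequence $(q_{F_n}(z))_{n\geq 0}$ is unbounded.

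The plan is to prove the contrapositive in the following form: if the orbit $(q_{F_n}(z))_{n\ge 0}$ is bounded, then $|q_{F_k}(z)|\le R$ for every $k$, where $R$ is a constant that depends only on $\delta$; I expect $R=2/\delta$ to work. Throughout, boundedness lets me invoke Lemma \ref{lema441} in the following guise: there is no $n$ with $|q_{F_n}(z)|>1$ and $|q_{F_{n+1}}(z)|>1$ simultaneously, so whenever $|q_{F_n}(z)|>1$ we must have $|q_{F_{n-1}}(z)|\le 1$ and $|q_{F_{n+1}}(z)|\le 1$ (for the indices that make sense).

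The heart of the argument is to assume, for contradiction, that $|q_{F_k}(z)|>R$ for some $k$, pick $k$ minimal with this property, and push the largeness \emph{backward} two steps. Since $R>1$, the remark above forces $|q_{F_{k+1}}(z)|\le 1$. For $k\ge 2$ I would use two instances of the defining three-term recursion. First, writing $q_{F_{k+1}}=\frac{1}{r_{k+1}}q_{F_k}q_{F_{k-1}}-(\frac{1}{r_{k+1}}-1)$ and using $|q_{F_{k+1}}(z)|\le 1$ together with $r_{k+1}\le 1$ gives $|q_{F_k}(z)|\,|q_{F_{k-1}}(z)|\le 1$, hence $|q_{F_{k-1}}(z)|<1/R$. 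Second, writing $q_{F_k}=\frac{1}{r_k}q_{F_{k-1}}q_{F_{k-2}}-(\frac{1}{r_k}-1)$ and using $r_k>\delta$ gives $|q_{F_{k-1}}(z)|\,|q_{F_{k-2}}(z)|\ge \delta|q_{F_k}(z)|-1>\delta R-1$. Combining the two, $|q_{F_{k-2}}(z)|>R(\delta R-1)=R$ when $R=2/\delta$, which contradicts the minimality of $k$.

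It then remains to dispatch the base cases $k\in\{0,1\}$ by hand, using the special relation $q_{F_1}=\frac{1}{p_2}q_{F_0}^2-(\frac{1}{p_2}-1)$. If $|q_{F_0}(z)|>R$, then $|q_{F_1}(z)|\ge \frac{1}{p_2}(|q_{F_0}(z)|^2-1)+1>1$, so both $|q_{F_0}(z)|,|q_{F_1}(z)|>1$, contradicting boundedness via Lemma \ref{lema441}. If instead $|q_{F_1}(z)|>R$, then $|q_{F_0}(z)|\le 1$ by Lemma \ref{lema441}, but then $|q_{F_1}(z)|\le \frac{2}{p_2}-1\le \frac{2}{\delta}-1<R$, again a contradiction. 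Reversing the logic yields the Proposition with the explicit constant $R=2/\delta$ (any larger value also works).

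The step I expect to need the most care is the \emph{direction} of propagation: a single large value of $|q_{F_k}(z)|$ does \emph{not} force $|q_{F_{k+1}}(z)|$ to be large — if $q_{F_{k-1}}(z)$ happens to be tiny, $q_{F_{k+1}}(z)$ stays bounded — so one cannot merely push forward and apply Lemma \ref{lema441} to the pair $(k,k+1)$. The argument must run backward, exploiting the fact that $|q_{F_{k+1}}(z)|\le 1$ is exactly what pins $|q_{F_{k-1}}(z)|$ down to size $<1/R$, which the recursion for $q_{F_k}$ then amplifies into a strictly larger value at index $k-2$. The rest is bookkeeping: checking that one choice of $R$ (a function of $\delta$ alone, uniform in $z$ and in $\overline{p}$) simultaneously satisfies $R>1$, $\delta R-1\ge 1$ and $R\ge \frac{2}{\delta}-1$, and handling the slightly non-uniform recursions at the small indices $F_0,F_1,F_2$; none of this is deep.
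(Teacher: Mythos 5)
Your proposal is correct and follows essentially the same route as the paper: the same backward two-step propagation (boundedness plus Lemma \ref{lema441} forces $|q_{F_{k+1}}(z)|\le 1$, hence $|q_{F_{k-1}}(z)|<1/R$, hence $|q_{F_{k-2}}(z)|>R$), with the same separate treatment of $k=0,1$. The only differences are cosmetic: you phrase it as a minimal-counterexample/contrapositive rather than the paper's strong induction, and you fix the explicit constant $R=2/\delta$ where the paper takes any $R>\frac{2}{\delta}-1$.
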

\begin{proof} Let $R>1$ be a real number such that $R>\frac{2}{\delta}-1$.
Hence, we have that
\begin{equation}
\frac{2}{p_2}-1<R<R\delta\left(R+1-\frac{1}{\delta}\right).
\label{R}
\end{equation}
If $|q_{F_0}(z)|>R$ then $|q_{F_1}(z)|\geq
\frac{1}{p_2}|q_{F_0}(z)|^2-\left|\frac{1}{p_2}-1\right|
>\frac{1}{p_2}R^2-\left(\frac{1}{p_2}-1\right)$, i.e. $|q_{F_1}(z)|>R^2$. Therefore, it follows from lemma \ref*{lema441} that the sequence $(q_{F_n}(z))_{n\geq 0}$ is not bounded.

If $|q_{F_1}(z)|>R$ then $R<|q_{F_1}(z)|\leq\frac{1}{p_2}|(q_{F_0}(z))^2|+\left|\frac{1}{p_2}
-1\right|=\frac{1}{p_2}|q_{F_0}(z)|^2+\frac{1}{p_2}-1$ and it implies
$|q_{F_0}(z)|^2>p_2\left(R+1-\frac{1}{p_2}\right)>\delta\left(R+1-\frac{1}{\delta}\right)>1$, where the last inequality follows from relation (\ref{R}). Hence, $|q_{F_0}(z)|>1$ and from lemma
\ref*{lema441} we have that the sequence $(q_{F_n}(z))_{n\geq 0}$ is not
bounded.

By induction on $k$, suppose that for all $i\in\{0,1,\ldots,k-1\}$
we have: if $|q_{F_i}(z)|>R$ then $(q_{F_n}(z))_{n\geq 0}$ is not bounded.

Suppose that $|q_{F_k}(z)|>R$. If $|q_{F_{k+1}}(z)|>1$, it follows from lemma \ref*{lema441}
that $(q_{F_n}(z))_{n\geq 0}$ is not bounded. Therefore, suppose $|q_{F_{k+1}}(z)|\leq 1$. Then we have

\begin{center}
$1\geq
|q_{F_{k+1}}(z)|\geq
\frac{1}{r_{k+1}}|q_{F_k}(z)||q_{F_{k-1}}(z)|-\left|\frac{1}{r_{k+1}}-1
\right|> \frac{R}{r_{k+1}}|q_{F_{k-1}}(z)|-\left(\frac{1}{r_{k+1}}-1\right)$,
\end{center}

\np i.e. $\frac{1}{R}>|f_{k-1}(z)|$. On the other hand, we have that

\begin{center}
$R<|q_{F_k}(z)|\leq
\frac{1}{r_k}|q_{F_{k-1}}(z)||q_{F_{k-2}}(z)|+\left|\frac{1}{r_k}-1\right|< \frac{1}{r_k}\frac{1}{R} |q_{F_{k-2}}(z)|+\frac{1}{r_k}-1$,
\end{center}

\np i.e. $|q_{F_{k-2}}(z)|>Rr_k\left(
R+1-\frac{1}{r_k}\right)>R\delta\left( R+1-\frac{1}{\delta}\right)>R$. Then by induction hypothesis we have that the sequence $(q_{F_n}(z))_{n\geq 0}$ is not bounded.
\end{proof}

\begin{Proposition}
Suppose there exists $\delta>0$ such that $p_i>\delta$, for all
$i\geq 1$. Then there exists a real number $R>1$ such that
$E=\displaystyle\bigcap_{n=0}^{+\infty}
q_{F_n}^{-1}\overline{D(0,R)}$, where $\overline{D(0,R)}$ is the closed
disk of centre $0$ and radius $R$. \label{prop442}
\end{Proposition}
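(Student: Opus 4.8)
The plan is to let $R>1$ be exactly the constant furnished by Proposition \ref{prop441} (concretely, any real number with $R>\tfrac{2}{\delta}-1$ works, which is available precisely because of the uniform bound $p_i>\delta$), and then to check the two inclusions that make up the claimed identity $E=\bigcap_{n=0}^{+\infty}q_{F_n}^{-1}\overline{D(0,R)}$.

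For the inclusion $\bigcap_{n\geq 0}q_{F_n}^{-1}\overline{D(0,R)}\subseteq E$, I would argue directly from the definition of $E$: if $z$ belongs to every preimage $q_{F_n}^{-1}\overline{D(0,R)}$, then $|q_{F_n}(z)|\leq R$ for all $n\geq 0$, so the sequence $(q_{F_n}(z))_{n\geq 0}$ is bounded, i.e. $z\in E$. For the reverse inclusion $E\subseteq\bigcap_{n\geq 0}q_{F_n}^{-1}\overline{D(0,R)}$, I would use the contrapositive of Proposition \ref{prop441}: if $z$ is not in the intersection, then $|q_{F_k}(z)|>R$ for some $k\in\mathbb{N}$, and Proposition \ref{prop441} then forces $(q_{F_n}(z))_{n\geq 0}$ to be unbounded, so $z\notin E$. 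The two inclusions together give the desired equality.

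The real work has already been done in Lemma \ref{lema441} and Proposition \ref{prop441}, so I do not expect a genuine obstacle here; the only subtlety is that $R$ must be chosen once and for all, uniformly in $z$, which is exactly what the hypothesis $p_i>\delta$ secures. This reformulation of $E$ as a countable intersection of preimages of a fixed closed disk under the polynomial maps $q_{F_n}$ is the natural bridge to Theorem \ref{teoremacomp}: each $q_{F_n}^{-1}\overline{D(0,R)}$ is closed and bounded (a preimage of a compact disk under a polynomial, intersected eventually with the bounded region the recursion confines it to), so $E$ is compact, and the description via polynomial preimages is what will later allow one to control the connectedness of $\mathbb{C}\setminus E$.
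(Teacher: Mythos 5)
Your proof is correct and matches the paper, which simply records this proposition as a direct consequence of Proposition \ref{prop441}: one inclusion is immediate from the definition of $E$, and the other is the contrapositive of that proposition with the same threshold $R>\tfrac{2}{\delta}-1$. Nothing further is needed.
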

\begin{proof} It is a direct consequence of proposition \ref{prop441}.
\end{proof}

\begin{Remark}
From proof of proposition \ref{prop441}, we also have $|q_{F_k}(z)|\geq
R$ for some nonnegative integer $k$ implies $(q_{F_n}(z))_{n\geq 0}$ is
not bounded. Hence, $E=\bigcap_{n=0}^{+\infty} q_{F_n}^{-1}D(0,R)$.
\label{obsg}
\end{Remark}

\np \textbf{Proof of theorem \ref{teoremacomp}:} It follows directly from proposition \ref{prop442} that $E$ is
compact.

From proposition \ref{prop441}, there exists $R>0$ such that $\mathbb{C}\setminus
E=\bigcup_{n=0}^{+\infty}\mathbb{C}\setminus
q_{F_n}^{-1}\overline{D(0,R)}$. Hence, since $\mathbb{C}\setminus\overline{D(0,R)}$ is
connected, it follows from \emph{maximum modulus principle} that for
each holomorphic map $q_{F_n}$, $\mathbb{C}\setminus
q_{F_n}^{-1}\overline{D(0,R)}$ is connected for all $n\geq 0$. On the
other hand, since $\mathbb{C}\setminus q_{F_n}^{-1}\overline{D(0,R)}$
contains a neighbourhood of infinity for all $n\geq 0$, we deduce
that

\begin{center}
$\mathbb{C}\setminus E=\bigcup_{n=0}^{+\infty}\mathbb{C}\setminus
q_{F_n}^{-1}\overline{D(0,R)}$ is connected.
\end{center}

\rightline{$\Box$}

\begin{Conjecture}
$E$ is a compact set and $\mathbb{C}\setminus E$ is a connected set,
even supposing $\displaystyle\liminf_{i\to+\infty} p_i=0$.
\end{Conjecture}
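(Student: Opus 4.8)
The plan is to deduce both halves of the statement — compactness of $E$ and connectedness of $\mathbb{C}\setminus E$ — from a single ``uniform escape radius'' whose proof avoids any lower bound on the $p_i$.

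First I would record that \emph{$E$ is bounded with no hypothesis on $\overline p$ at all}. Since $\frac1{p_2}\geq 1$, if $|q_{F_0}(z)|>\sqrt 2$ then
\[
|q_{F_1}(z)|\;\geq\;\frac1{p_2}|q_{F_0}(z)|^2-\Bigl|\frac1{p_2}-1\Bigr|\;=\;\frac{|q_{F_0}(z)|^2-1}{p_2}+1\;>\;\frac1{p_2}+1\;>\;1,
\]
so $|q_{F_0}(z)|>1$ and $|q_{F_1}(z)|>1$, and Lemma~\ref{lema441} — which holds for an arbitrary sequence $\overline p\subset\,]0,1]$ — gives $z\notin E$. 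Hence $E\subset\{z:|z-(1-p_1)|\leq\sqrt 2\,p_1\}$, a fixed closed disk.

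The target is then: \emph{there is an absolute constant $R>1$ such that, for every $\overline p$, $|q_{F_k}(z)|>R$ for some $k\geq 0$ forces $(q_{F_n}(z))_{n\geq 0}$ to be unbounded.} Granting this, $E=\bigcap_{n\geq 0}q_{F_n}^{-1}\overline{D(0,R)}$, so $E$ is closed and, being bounded, compact; and, exactly as in the proof of Theorem~\ref{teoremacomp}, $\mathbb{C}\setminus E=\bigcup_{n\geq 0}\bigl(\mathbb{C}\setminus q_{F_n}^{-1}\overline{D(0,R)}\bigr)$ is an increasing union of sets, each connected by the maximum modulus principle and each containing a neighbourhood of $\infty$, hence connected. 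The replacement for Proposition~\ref{prop441} I would try to prove rests on the \emph{non-contraction inequality}
\[
|q_{F_n}(z)|\;\geq\;\frac1{r_n}\bigl|q_{F_{n-1}}(z)q_{F_{n-2}}(z)-1\bigr|-1\;\geq\;|q_{F_{n-1}}(z)|\,|q_{F_{n-2}}(z)|-2\qquad(n\geq 2),
\]
which holds because $r_n\leq 1$. If $|q_{F_{k-1}}(z)|>1$ and $|q_{F_k}(z)|>1$, Lemma~\ref{lema441} concludes; if $|q_{F_k}(z)|>R$ and $|q_{F_{k-1}}(z)|>3/R$, the inequality at $n=k+1$ gives $|q_{F_{k+1}}(z)|>1$ and we are again done. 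So one reduces to an ``isolated spike'': $|q_{F_k}(z)|>R$ while $|q_{F_{k-1}}(z)|\leq 3/R$ and $|q_{F_{k+1}}(z)|\leq 1$, which (by the same inequalities) also forces $r_k$ to be small. The plan is then to propagate this forward and show a spike must produce two consecutive large iterates within boundedly many steps, exploiting that a small $r_n$ \emph{amplifies} the recursion.

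The hard part is exactly this last step in the regime $\liminf_i p_i=0$, i.e.\ $r_n\to 0$ along a subsequence. Once $q_{F_{k+1}}(z)$ is forced small, the next iterate is governed by $\frac1{r_{k+2}}\bigl(q_{F_{k+1}}(z)q_{F_k}(z)-1\bigr)$, and since the product $q_{F_{k+1}}(z)q_{F_k}(z)$ can be anything, whether $|q_{F_{k+2}}(z)|$ is again large depends on delicate cancellation; these estimates alone do not rule out a bounded orbit with infinitely many isolated spikes, which would make $E$ non-closed or its complement disconnected. Overcoming this seems to require a genuinely new ingredient — morally, that $z$ cannot be tuned independently of the fixed family $(q_{F_n})$ of polynomials of growing degree, so that the simultaneous conditions ``$|q_{F_{n_j\pm 1}}(z)|\leq 1$ for all $j$'' over-determine $z$ — or else an escape criterion robust under $r_n\to 0$, or a direct argument realizing $\mathbb{C}\setminus E$ as an increasing union of connected open sets without a uniform radius. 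This is precisely the obstruction that keeps the argument of Theorem~\ref{teoremacomp} (through Propositions~\ref{prop441} and~\ref{prop442}) from going through, and is why the statement is only a conjecture.
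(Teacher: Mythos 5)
You have not proved the statement, and to be clear the paper does not either: it is stated as a Conjecture, with Theorem~\ref{teoremacomp} (via Propositions~\ref{prop441} and~\ref{prop442}) establishing compactness of $E$ and connectedness of $\mathbb{C}\setminus E$ only under the hypothesis $p_i>\delta>0$. Your own text concedes this, so there is no complete argument to certify; the concrete gap is exactly the one you name. Everything in your approach up to that point is sound: the observation that $E$ is \emph{bounded} with no hypothesis on $\overline p$ is correct (if $|q_{F_0}(z)|>\sqrt2$ then $|q_{F_1}(z)|\geq\frac{|q_{F_0}(z)|^2-1}{p_2}+1>1$, and Lemma~\ref{lema441} indeed only uses $r_n\leq 1$, so it holds for arbitrary $\overline p\subset\,]0,1]$), and this is a small genuine improvement on the paper, where boundedness of $E$ is only obtained through Proposition~\ref{prop441} under the $\delta$-hypothesis. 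The non-contraction inequality $|q_{F_n}(z)|\geq|q_{F_{n-1}}(z)|\,|q_{F_{n-2}}(z)|-2$ and the reduction to ``isolated spikes'' ($|q_{F_k}|>R$, $|q_{F_{k-1}}|\leq 3/R$, $|q_{F_{k+1}}|\leq 1$) are also correct.

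But the conjecture's content is precisely the step you leave open: a uniform escape radius (or any substitute) that survives $r_n\to 0$ along a subsequence. Without it, closedness of $E$ — hence compactness, since boundedness alone does not suffice — is not established, and neither is the representation $\mathbb{C}\setminus E=\bigcup_n\bigl(\mathbb{C}\setminus q_{F_n}^{-1}\overline{D(0,R)}\bigr)$ on which the connectedness argument of Theorem~\ref{teoremacomp} rests. Note that in the regime $\liminf p_i=0$ one cannot even take for granted that the answer is affirmative: the paper's own Theorem~\ref{teorema442} and its Corollary show that $E$ itself can fail to be connected for suitable $\overline p$, so delicate dependence on the sequence $(r_n)$ is real, and ruling out bounded orbits with infinitely many isolated spikes (or, equivalently, proving closedness of $E$ and connectedness of its complement by some other route, e.g.\ an exhaustion by connected open sets without a uniform radius) would require a new idea not present either in your sketch or in the paper. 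As it stands, your proposal is an honest partial analysis of an open problem, not a proof.
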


\begin{Proposition}
Suppose there exists $\delta>0$ such that $p_i>\delta$, for all
$i\geq 1$. Then there exists $R>1$ such that
$E=\bigcap_{n=0}^{+\infty}q_{F_n}^{-1}D(0,R)$
and $q_{F_{n+1}}^{-1}D(0,R)\subset q_{F_n}^{-1}D(0,R)$, for all $n\geq 0$.
\label{prop443}
\end{Proposition}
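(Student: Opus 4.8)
The first identity is just Remark~\ref{obsg}: the proof of Proposition~\ref{prop441} shows that $|q_{F_k}(z)|\ge R$ for some $k$ forces $(q_{F_n}(z))_{n\ge 0}$ to be unbounded, hence $z\notin E$, and the reverse inclusion is trivial. So the real content is the nesting $q_{F_{n+1}}^{-1}D(0,R)\subset q_{F_n}^{-1}D(0,R)$, which I would reduce to a \emph{monotone escape} property: for $R$ chosen large enough in terms of $\delta$ (in particular $R>\frac2\delta-1$), if $z\notin E$ and $n_0:=\min\{m:|q_{F_m}(z)|\ge R\}$, then $|q_{F_m}(z)|\ge R$ for every $m\ge n_0$. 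Granting this, the nesting follows at once: were $|q_{F_{n+1}}(z)|<R$ and $|q_{F_n}(z)|\ge R$, then $z\notin E$, $n\ge n_0$, so $n+1\ge n_0$ and monotone escape would give $|q_{F_{n+1}}(z)|\ge R$, a contradiction.

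Monotone escape I would prove by first handling the step from $n_0$ to $n_0+1$ and then iterating: once $|q_{F_m}(z)|\ge R$ and $|q_{F_{m-1}}(z)|\ge R$ both hold (which is automatic for $m\ge n_0+1$ by induction), the recursion $q_{F_{m+1}}=\frac1{r_{m+1}}q_{F_m}q_{F_{m-1}}-\bigl(\frac1{r_{m+1}}-1\bigr)$ together with $\frac1{r_{m+1}}\ge 1$ gives $|q_{F_{m+1}}(z)|\ge\frac1{r_{m+1}}R^2-\bigl(\frac1{r_{m+1}}-1\bigr)\ge R$. The genuine step is $|q_{F_{n_0+1}}(z)|\ge R$, which I would prove by contradiction. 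If $|q_{F_{n_0+1}}(z)|<R$, then combining this with $|q_{F_{n_0}}(z)|\ge R$ in the recursion for $q_{F_{n_0+1}}$ yields $|q_{F_{n_0-1}}(z)|<\frac{r_{n_0+1}(R-1)+1}{R}\le 1$. Feeding $|q_{F_{n_0-1}}(z)|<1$ into the recursion for $q_{F_{n_0}}$ and using $|q_{F_{n_0-2}}(z)|<R$ (minimality of $n_0$) forces $|q_{F_{n_0-2}}(z)|>r_{n_0}(R+1)-1>1$, since $R>\frac2\delta-1$. Next, $|q_{F_{n_0-1}}(z)|<1$ in the recursion for $q_{F_{n_0-1}}$ gives $|q_{F_{n_0-2}}(z)||q_{F_{n_0-3}}(z)|<1$, hence $|q_{F_{n_0-3}}(z)|<\frac1{\delta(R+1)-1}$; and $|q_{F_{n_0-2}}(z)|>\delta(R+1)-1$ in the recursion for $q_{F_{n_0-2}}$ gives $|q_{F_{n_0-3}}(z)||q_{F_{n_0-4}}(z)|>\delta^2(R+1)-1$, so $|q_{F_{n_0-4}}(z)|>\bigl(\delta^2(R+1)-1\bigr)\bigl(\delta(R+1)-1\bigr)$. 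For $R$ large this exceeds $R$, contradicting the minimality of $n_0$. The small cases $n_0\in\{0,1\}$ are immediate --- the squaring rule $q_{F_1}=\frac1{p_2}q_{F_0}^2-\bigl(\frac1{p_2}-1\bigr)$ carries $|q_{F_0}(z)|\ge R$ to $|q_{F_1}(z)|\ge R^2\ge R$, and for $n_0=1$ the assumption $|q_{F_2}(z)|<R$ would force $|q_{F_0}(z)|<1$ and hence $R<\frac2\delta-1$ --- while $n_0\in\{2,3\}$ are treated by the same chain of estimates, the backward walk terminating at the squaring relation between $q_{F_1}$ and $q_{F_0}$.

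The crux, and the main obstacle, is precisely this failure of one step of the recursion to propagate largeness from $q_{F_{n_0}}$ to $q_{F_{n_0+1}}$: the companion factor $q_{F_{n_0-1}}$ may dip below modulus $1$. The fix is to push the smallness of $q_{F_{n_0-1}}$ and the largeness of $q_{F_{n_0}}$ backward four indices and collide with the minimality of $n_0$; for the backward chain to close one must take $R$ large (large enough that a bound quadratic in $R$ beats a linear one). The only other point requiring care is bookkeeping at the low end of the index range, where $q_{F_1}$ obeys a squaring rather than a product rule --- using the convention $q_{F_{-1}}:=q_{F_0}$ the product recursion remains valid for all indices $\ge 1$, which keeps the argument uniform.
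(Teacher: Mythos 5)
Your proposal is correct and follows essentially the same route as the paper: the identity $E=\bigcap_{n\geq 0}q_{F_n}^{-1}D(0,R)$ comes from Remark \ref{obsg}, and the nesting is proved by contradiction via the same backward walk of about four indices (small at $n_0+1$ and $n_0-1$, large at $n_0$ and $n_0-2$, very large at $n_0-4$), with $R$ chosen large so that a quadratic-in-$R$ lower bound beats $R$ and with the low indices handled separately through the squaring relation between $q_{F_1}$ and $q_{F_0}$. The only difference is bookkeeping: you organize the argument around the minimal escape index $n_0$ (whose minimality supplies the bounds $|q_{F_m}(z)|<R$ for $m<n_0$) with explicit constants, whereas the paper runs an induction on the inclusions themselves and uses the induction hypothesis plus $\mathcal{O}$-estimates to the same effect.
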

\begin{proof} Let $R$ be a real number such that $R>\frac{2}{\delta}-1$.
From remark \ref{obsg}, we have that $E= \bigcap_{n=0}^{+\infty}q_{F_n}^{-1}D(0,R)$.

We will prove by induction on $k$ that $q_{F_{k+1}}^{-1}D(0,R)\subset
q_{F_k}^{-1}D(0,R)$, for all $k\geq 0$.

In fact, suppose that $q_{F_{k+1}}^{-1}D(0,R)\subset q^{-1}_{F_k}D(0,R)$,
for all $k\in\{0,\ldots,n-1\}$, with $n\geq 4$. Let $z\in
q^{-1}_{F_{n+1}}D(0,R)$ and suppose that $q_{F_n}(z)\geq R$.

Let $A=1-\frac{1}{\delta}$. Hence, $\left|1-\frac{1}{r_n}\right|<|A|$
for all $n\geq 1$ and

\begin{center}
$R>|q_{F_{n+1}}(z)|\geq
\frac{1}{r_{n+1}}R|q_{F_{n-1}}(z)|-\left|\frac{1}{r_{n+1}}-1\right|$, i.e.
$|q_{F_{n-1}}(z)|<\frac{r_{n+1}(R+|A|)}{R}=\mathcal{O}(1)$.
\end{center}

Therefore, by induction hypothesis we have that
\begin{equation}
|q_{F_k}(z)|<R \textrm{ for all } k\in \{1,\ldots,n-2\}. \label{419}
\end{equation}
On the other hand

\begin{center}
$|q_{F_n}(z)|\leq\frac{1}{r_n}|q_{F_{n-1}}(z)||q_{F_{n-2}}(z)|+\left|\frac{1}{r_n}-1\right|
<\frac{1}{r_n}\mathcal{O}(1)|q_{F_{n-2}}(z)|+|A|$, i.e.
$|q_{F_{n-2}}(z)|> r_n\frac{|q_{F_n}(z)|-|A|}{\mathcal{O}(1)}>\delta\dfrac{R-|A|}{\mathcal{O}(1)}
= \mathcal{O}(R)$.
\end{center}

\np Thus, continuing this way we have that $|q_{F_{n-3}}(z)|<\mathcal{O}\left(\frac{1}{R}\right)$ and
$|q_{F_{n-4}}(z)|> \mathcal{O}(R^2)$.

\np Choosing $R$ large enough, we have $|q_{F_{n-4}}(z)|>R$ and this contradicts the relation (\ref{419}).
Therefore, $z\in q^{-1}_{F_n}(D(0,R))$.

Now, in order to finish the proof of this theorem, we will prove
that $q_{F_{k+1}}^{-1}D(0,R)\subset q_{F_k}^{-1}D(0,R)$, for $k=0,1,2,3$.

\np \textbf{Case k=0:} Let $z\in q_{F_1}^{-1}D(0,R)$. Then
$\frac{1}{p_2}|q_{F_0}(z)|^2-\left(\frac{1}{p_2}-1\right)<R$. Therefore,

\begin{center}
$\frac{1}{p_2}(|q_{F_0}(z)|^2-1)<R-1$ and $|q_{F_0}(z)|<\sqrt{R}<R$, i.e. $z\in q^{-1}_{F_0} D(0,R)$.
\end{center}

\np \textbf{Case k=1:} Let $z\in q_{F_2}^{-1}D(0,R)$. Then $\frac{1}{r_2}|q_{F_1}(z)||q_{F_0}(z)|-\left(\frac{1}{r_2}-1\right)<R$. Therefore,
$|q_{F_1}(z)||q_{F_0}(z)|< r_2R+1-r_2 < R$. Thus, we have that $|q_{F_1}(z)|<(r_2R+1-r_2)^{\frac{3}{4}}$ or
$|q_{F_0}(z)|<(r_2R+1-r_2)^{\frac{1}{4}}$.

If $|q_{F_1}(z)|<(r_2R+1-r_2)^{\frac{3}{4}}<R$ then $z\in q_{F_1}^{-1}D(0,R)$.

If $|q_{F_0}(z)|<(r_2R+1-r_2)^{\frac{1}{4}}$ then

\begin{center}
$|q_{F_1}(z)|\leq \frac{1}{r_2}|q_{F_0}(z)|^2+\left|\frac{1}{r_2}-1\right|\leq
\frac{1}{r_2}(r_2R+1-r_2)^{\frac{1}{2}}+\left|\frac{1}{r_2}-1\right|$.
\end{center}

Thus, choosing $R$ large enough such that $\frac{1}{r_2}(r_2R+1-r_2)^{\frac{1}{2}}+\left|\frac{1}{r_2}-1\right|<R$, we have
$z\in q_{F_1}^{-1}D(0,R)$.

\np \textbf{Case k=2:} Let $z\in q_{F_3}^{-1}D(0,R)$. Then $\frac{1}{r_3}|q_{F_2}(z)||q_{F_1}(z)|-\left(\frac{1}{r_3}-1\right)<R$. Therefore,
$|q_{F_2}(z)||q_{F_1}(z)|< r_3R+1-r_3 \leq R$. Thus, we have that $|q_{F_2}(z)|<(r_3R+1-r_3)^{\frac{1}{2}}$ or
$|q_{F_1}(z)|<(r_3R+1-r_3)^{\frac{1}{2}}$.

If $|q_{F_2}(z)|<(r_3R+1-r_3)^{\frac{1}{2}}\leq R$ then $z\in q_{F_2}^{-1}D(0,R)$.

If $|q_{F_1}(z)|<(r_3R+1-r_3)^{\frac{1}{2}}$ then $\frac{1}{p_2}|q_{F_0}(z)|^2-\left(\frac{1}{p_2}-1\right)<(r_3R+1-r_3)^{\frac{1}{2}}$, i.e.
$|q_{F_0}(z)|<\sqrt{p_2}\left((r_3R+1-r_3)^{\frac{1}{2}}+\left(\frac{1}{p_2}-1\right)\right)^\frac{1}{2}$. Hence,
$|q_{F_2}(z)|\leq \frac{1}{p_2}|q_{F_1}(z)||q_{F_0}(z)|+ \left(\frac{1}{p_2}-1\right)$, i.e.
$|q_{F_2}(z)| < \frac{1}{p_2}(r_3R+1-r_3)^{\frac{1}{2}}\sqrt{p_2}\left((r_3R+1-r_3)^{\frac{1}{2}}+  \left(\frac{1}{p_2}-1\right)\right)^\frac{1}{2} + \left(\frac{1}{p_2}-1\right)=\mathcal{O}(R^{\frac{3}{4}}).$

Thus, choosing $R$ large enough, we have $|q_{F_2}(z)|<R$, i.e.
$z\in q_{F_2}^{-1}D(0,R)$.

\np \textbf{Case k=3} This case can be done by the same way.
\end{proof}

Let $h:\mathbb{C}\longrightarrow\mathbb{C}$ be a non-null polynomial
and define the set $\mathcal{F}_h:=\{z\in\mathbb{C}:(g_n\circ\ldots\circ g_2(h(z),z))_{n\geq 2}$
is bounded$\}$, where
$g_n:\mathbb{C}^2\longrightarrow\mathbb{C}^2$ is defined in the remark \ref{obslazara}.

\begin{Remark}
If $h(z)=\frac{1}{p_2}z^2-\left(\frac{1}{p_2}-1\right)$ then $E$
and $\mathcal{F}_h$ are isomorphic by $l$, where
$l:\mathbb{C}\longrightarrow\mathbb{C}$ is defined by
$l(\lambda)=\frac{1}{p_1}\lambda-\left(\frac{1}{p_1}-1\right)$. \label{obs4422}
\end{Remark}

\begin{Claim}
The above results are true, if we consider the set $\mathcal{F}_h$,
for all non-null polynomial $h$. In particular, if we suppose there
exists $\delta>0$ such that $p_i>\delta$, for all $i\geq 1$ then we
have that $\mathbb{C}\setminus\mathcal{F}_h$ is a connected set and
there exists $R>1$ such that $\mathcal{F}_h=\bigcap_{n=0}^{\infty}
\varphi_n^{-1}D(0,R)$, where $\varphi_0(z)=z$, $\varphi_1(z)=h(z)$
and $\varphi_n(z)=
\frac{1}{r_n}\varphi_{n-1}(z)\varphi_{n-2}(z)-\left(\frac{1}{r_n}-1\right)$,
for all $n\geq 2$. Furthermore we have that
$\varphi^{-1}_{n+1}D(0,R)\subset\varphi^{-1}_nD(0,R)$, for all
$n\in\mathbb{N}$. \label{obs442}.
\end{Claim}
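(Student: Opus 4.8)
The plan is to exploit the fact that the functions $\varphi_n$ in the statement obey, for $n\geq 2$, exactly the recursion $\varphi_n=\frac{1}{r_n}\varphi_{n-1}\varphi_{n-2}-\left(\frac{1}{r_n}-1\right)$ that governs the $q_{F_n}$, with the same coefficients $r_n=p_{\left[\frac{n+1}{2}\right]+1}>\delta$; only the two seed functions change, namely $\varphi_0(z)=z$ and $\varphi_1(z)=h(z)$ in place of the affine map $q_{F_0}$ and the quadratic $q_{F_1}$. Remark \ref{obs4422} already identifies $E$ with $\mathcal{F}_h$ (via the affine isomorphism $l$) when $h$ is that particular quadratic, so the results for $E$ are literally the special case $h(z)=\frac{1}{p_2}z^2-\left(\frac{1}{p_2}-1\right)$ of the Claim. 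Accordingly I would simply re-run the proofs of Lemma \ref{lema441}, Propositions \ref{prop441}, \ref{prop442}, \ref{prop443} and Theorem \ref{teoremacomp}, keeping track of the only places where the explicit form of $q_{F_0},q_{F_1}$ was used: the small-index base cases, and the maximum-modulus step (which needs only that the $\varphi_n$ are non-constant holomorphic, hence is unaffected).

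The recursion-driven parts transfer verbatim. The proof of Lemma \ref{lema441} used only $1/r_n\geq 1$ and the recursion, so its analogue — if $|\varphi_k(z)|,|\varphi_{k+1}(z)|>1$ for some $k$ then $(\varphi_n(z))_n$ is unbounded, with the estimate $|\varphi_{k+\ell}(z)|>A^{F_{\ell-1}}$ — holds as stated. Likewise the inductive steps in Propositions \ref{prop441} (for indices $\geq 2$) and \ref{prop443} (for indices $\geq 4$) only manipulate the recursion and the bound $r_n>\delta$; copying them gives the escape criterion ``$|\varphi_k(z)|>R$ for some $k$ $\Rightarrow$ $(\varphi_n(z))$ unbounded'' and the nestedness $\varphi_{n+1}^{-1}D(0,R)\subset\varphi_n^{-1}D(0,R)$ for all sufficiently large indices, once the base cases are secured. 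For the topological conclusions of Theorem \ref{teoremacomp} nothing changes at all: each $\varphi_n$ is a non-constant polynomial, so by the maximum modulus principle $\mathbb{C}\setminus\varphi_n^{-1}\overline{D(0,R)}$ is a connected neighbourhood of $\infty$, whence $\mathbb{C}\setminus\mathcal{F}_h=\bigcup_{n\geq 0}\bigl(\mathbb{C}\setminus\varphi_n^{-1}\overline{D(0,R)}\bigr)$ is connected, and $\mathcal{F}_h=\bigcap_{n\geq 0}\varphi_n^{-1}\overline{D(0,R)}\subseteq\overline{D(0,R)}$ is compact.

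The actual work — and the only genuinely new point — is the finitely many base cases for indices $0,1,2,3$, now with $\varphi_1=h$ an arbitrary non-null polynomial rather than a prescribed quadratic. Here I would use that $\varphi_0,\dots,\varphi_3$ are polynomials and fix $R$ large enough (depending on $\delta$ and on the degree and coefficients of $h$) that, on $\{|z|>R\}$, the fact that $|h(z)|\to\infty$ forces $|\varphi_1(z)|,|\varphi_2(z)|>1$ when $\deg h\geq1$ (respectively $|\varphi_2(z)|,|\varphi_3(z)|>1$ when $h$ is a nonzero constant), so Lemma \ref{lema441} yields escape; and also $R>\frac{2}{\delta}-1$ as in Proposition \ref{prop441}. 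For the remaining low-index escape and nesting inequalities one repeats the computations of Propositions \ref{prop441} and \ref{prop443}, each time ruling out the only dangerous configuration — $z$ close to a zero of $h$, where $|h(z)|$ is small — by observing that there $|z|$ is bounded by one plus the largest modulus of a root of $h$, so the offending quantity is absorbed into the constants. I expect the main obstacle to be purely administrative: choosing a single radius $R$ that simultaneously validates all these finitely many estimates (and, for the nesting at $n=0$, possibly enlarging $R$ or passing to an affine normalisation of $h$ so that $h$ dominates $z$ at infinity). Once such $R$ is fixed, every remaining step is a transcription of the arguments already given for $E$.
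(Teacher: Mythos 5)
Your overall strategy is the right one, and it is exactly what the paper has in mind (the Claim is stated without proof, the intention being precisely that the arguments of Lemma \ref{lema441}, Propositions \ref{prop441}, \ref{prop442}, \ref{prop443} and Theorem \ref{teoremacomp} transfer, the recursion and the bound $r_n>\delta$ being all that the inductive steps use). The parts of your outline concerning the recursion-driven steps, the compactness of $\mathcal{F}_h\subset\overline{D(0,R)}$, and the connectedness of the complement via the maximum modulus principle are fine. However, the base cases are not merely administrative, and this is where your proposal has genuine gaps. First, a small one: in the escape base case $k=1$ the configuration you must exclude is not ``$z$ close to a zero of $h$'' (there $|h(z)|$ is small and nothing is claimed); it is $|h(z)|>R$ with $|z|\leq 1$, where the hypothesis of Lemma \ref{lema441} on $\varphi_0$ fails. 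It is excluded simply by taking $R>\max_{|z|\leq 1}|h(z)|$, so that $|h(z)|>R$ forces $|z|>1$ and the lemma applies; this choice of $R$ (depending on $h$ and $\delta$) should be made explicit, and with it the identity $\mathcal{F}_h=\bigcap_{n\geq 0}\varphi_n^{-1}D(0,R)$ does hold for every non-null $h$.

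Second, and more seriously, the ``furthermore'' part at $n=0$ cannot be rescued by your suggested devices. The inclusion $\varphi_1^{-1}D(0,R)\subset\varphi_0^{-1}D(0,R)$ reads $h^{-1}D(0,R)\subset D(0,R)$, and this fails for every $R$ when $\deg h\leq 1$: for $h(z)=\frac{z}{2}$ one has $h^{-1}D(0,R)=D(0,2R)$, and for a nonzero constant $h\equiv c$ with $|c|<R$ one has $h^{-1}D(0,R)=\mathbb{C}$ (for small constants the inclusions at the next few indices fail as well, since $\varphi_2,\varphi_3$ are then affine maps with tiny slopes). Enlarging $R$ rescales both sides and does not help, and an ``affine normalisation'' of $h$ changes the set $\mathcal{F}_h$, because the fixed recursion $\varphi_n=\frac{1}{r_n}\varphi_{n-1}\varphi_{n-2}-\left(\frac{1}{r_n}-1\right)$ does not commute with conjugation. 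So the Claim as literally stated (all non-null $h$) cannot be proved in full; you must either restrict to polynomials with $\deg h\geq 2$ (after which leading-term domination gives $|z|\geq R\Rightarrow|h(z)|\geq R$ for $R$ large, the low-index cases of Proposition \ref{prop443} go through with the $R>\max_{|z|\leq 1}|h|$ device, and this restricted class covers everything the paper actually uses, namely Remark \ref{obs4422} and Theorem \ref{teorema442}), or else prove only the compactness, connectedness and intersection statements and drop the nesting at $n=0$.
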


\begin{Theorem}
Let $h(z)=a_2z^2+a_3z^3+\ldots+a_nz^n$, with $n\geq 2$ and
$a_2,\ldots,a_n\in\mathbb{C}$, and suppose that
$\displaystyle\liminf_{i\to+\infty} p_i>0$. Then we have the
followings results:

\np $a)$ If $p_3<\frac{1}{2}$ and $p_2=1$ or $0<p_2<1-p_3$
then $\mathcal{F}_h$ is a non-connected set.

\np $b)$ If $p_i=1$ for all $i\in\{2,3,\ldots,k\}$ and
$p_{k+1}<\frac{1}{2}$, for some $k\geq 3$ then $\mathcal{F}_h$ is a
non-connected set. \label{teorema442}
\end{Theorem}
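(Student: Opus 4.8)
The plan is to use the description $\mathcal{F}_h=\bigcap_{n\ge 0}\varphi_n^{-1}(D(0,R))$ from Claim~\ref{obs442}. Since every $p_i$ lies in $\,]0,1]$ and $\liminf_i p_i>0$, we actually have $\delta:=\inf_i p_i>0$, so the hypothesis of Claim~\ref{obs442} holds, and inspecting its proof we may take $R>2/\delta-1$. Write $\Omega_n:=\varphi_n^{-1}(D(0,R))$; each is a bounded open set (a preimage of a bounded set under the nonconstant polynomial $\varphi_n$, $n\ge 1$), $\Omega_{n+1}\subseteq\Omega_n$, $\bigcap_n\Omega_n=\mathcal{F}_h$, and moreover $|\varphi_n(z)|\le R$ for every $n$ already implies $z\in\mathcal{F}_h$.

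The first step is to follow the orbit of the base point $z=0$. Because $h(z)=a_2z^2+\cdots+a_nz^n$ satisfies $h(0)=h'(0)=0$ (and $h\not\equiv 0$), differentiating the recursion $\varphi_n=\tfrac1{r_n}\varphi_{n-1}\varphi_{n-2}-(\tfrac1{r_n}-1)$ and inducting on $n$ gives $\varphi_n'(0)=0$ for all $n\ge 1$; thus $0$ is a critical point of every $\varphi_n$, with critical value $\varphi_n(0)$. A short computation of $\varphi_j(0)$ for small $j$ then does the work in each case: in case (b), as well as when $p_2=1$ in case (a), one finds $\varphi_j(0)=0$ for every $j$ below the first index with $r_j<1$, after which $\varphi_{2k-1}(0)=\varphi_{2k}(0)=1-\tfrac1{p_{k+1}}$ (respectively $\varphi_3(0)=\varphi_4(0)=1-\tfrac1{p_3}$), both of modulus $>1$ since $p_{k+1}<\tfrac12$ (respectively $p_3<\tfrac12$); in the remaining sub-case of (a) one gets $\varphi_2(0)=1-\tfrac1{p_2}$, $\varphi_3(0)=1-\tfrac1{p_3}$ and $\varphi_4(0)=(1-\tfrac1{p_3})\cdot\tfrac{(1-1/p_2)+p_3}{p_3}$, and the two inequalities $p_3<\tfrac12$ and $p_2<1-p_3$ are exactly what make $|\varphi_3(0)|>1$ and $|\varphi_4(0)|>1$. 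So in every case there are two consecutive indices $m,m+1$ with $|\varphi_m(0)|,|\varphi_{m+1}(0)|>1$, and Lemma~\ref{lema441} (valid for the $\varphi_n$ by Claim~\ref{obs442}) shows $(\varphi_n(0))_n$ is unbounded. Fix $N$ with $|\varphi_N(0)|>R$, so $0\notin\Omega_N$.

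Now $\varphi_N$ is a polynomial of degree $\ge 2$ possessing a critical point at $0$ whose critical value $\varphi_N(0)$ lies outside $\overline{D(0,R)}$. If $\Omega_N$ were connected then $\varphi_N\colon\Omega_N\to D(0,R)$ would be a proper branched cover of degree $\deg\varphi_N$, and Riemann--Hurwitz would force \emph{all} critical points of $\varphi_N$ into $\Omega_N$, contradicting $0\notin\Omega_N$. Hence $\Omega_N$ has at least two, and (being a polynomial preimage) only finitely many, connected components.

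The last step --- which I expect to be the main obstacle --- is to show that \emph{every} component of $\Omega_N$ meets $\mathcal{F}_h$. Let $V$ be such a component. Since the boundary of $V$ lies on $\{|\varphi_N|=R\}$, the restriction $\varphi_N\colon V\to D(0,R)$ is proper, hence onto, so there is $z^\ast\in V$ with $\varphi_N(z^\ast)=0$; then $\varphi_{N+1}(z^\ast)=-(\tfrac1{r_{N+1}}-1)$, of modulus $\tfrac1{r_{N+1}}-1\le\tfrac1\delta-1<R$, so $z^\ast\in\Omega_{N+1}$ and therefore lies in a component $V'\subseteq V$ of $\Omega_{N+1}$. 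Iterating yields a decreasing chain $V=V_N\supseteq V_{N+1}\supseteq\cdots$ of components together with points $z^\ast_m\in V_m$; an accumulation point $z^\infty$ of $(z^\ast_m)_m$ (they lie in the bounded set $\Omega_N$) satisfies $|\varphi_n(z^\infty)|\le R$ for all $n$, hence $z^\infty\in\mathcal{F}_h$, and $z^\infty\in V$ because $\mathcal{F}_h\subseteq\Omega_N$ while $V$ is relatively closed in $\Omega_N$. Thus $\mathcal{F}_h$ is partitioned into the finitely many relatively clopen pieces $\mathcal{F}_h\cap V_i$, with $V_i$ ranging over the components of $\Omega_N$, at least two of which are nonempty; so $\mathcal{F}_h$ is disconnected, which establishes both (a) and (b).
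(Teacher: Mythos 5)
Your argument is correct and, in its main lines, coincides with the paper's proof of Theorem \ref{teorema442}: the same computation of the orbit $\varphi_j(0)$ of the critical point $0$ in each case, Lemma \ref{lema441} to conclude that this orbit is unbounded, and the nested description $\mathcal{F}_h=\bigcap_n\varphi_n^{-1}D(0,R)$ of Claim \ref{obs442} together with the Riemann--Hurwitz Lemma \ref{lema443} to produce an $N$ with $0\notin\Omega_N:=\varphi_N^{-1}D(0,R)$ and $\Omega_N$ disconnected. Where you genuinely add something is the final step. The paper passes directly from ``the $\Omega_n$ are nested and eventually disconnected'' to ``$\mathcal{F}_h$ is disconnected''; as you observe, that inference is not automatic, since a nested intersection of disconnected open (or compact) sets can be connected if it eventually misses all but one component. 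Your properness argument closes exactly this point: each component $V_m$ of $\Omega_m$ maps properly, hence onto, $D(0,R)$, so it contains a zero $z^\ast_m$ of $\varphi_m$, and $\varphi_{m+1}(z^\ast_m)=-\left(\frac{1}{r_{m+1}}-1\right)$ has modulus at most $\frac{1}{\delta}-1<R$, which lets you build a nested chain of components and an accumulation point of $\mathcal{F}_h$ inside the original component $V$ (using that components of $\Omega_N$ are relatively clopen). Thus at least two components of $\Omega_N$ meet $\mathcal{F}_h$, and $\mathcal{F}_h$ is disconnected; note that for this you only need two components, so finiteness of their number is superfluous. Two harmless details: any $R$ furnished by Claim \ref{obs442} works (Proposition \ref{prop443} may take $R$ larger than $\frac{2}{\delta}-1$, but you only use $R>\frac{1}{\delta}-1$); and in the subcase $0<p_2<1-p_3$ the inequality $|\varphi_4(0)|>1$ deserves its one-line verification, e.g. $|\varphi_4(0)|=\frac{1-p_3}{p_3}\cdot\frac{1-p_2-p_2p_3}{p_2p_3}>\frac{1-p_3}{p_2}>1$, which indeed uses only $p_2<1-p_3$.
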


Before to prove the theorem \ref{teorema442}, it will be necessary
the following lemma which is a particular case of the
Riemann-Hurwitz formula (see the theorem $7.2$ of the page $70$ in \cite{m} and the theorem $1.1.4$ and the lemma $1.1.5$ of the page $10$ in \cite{mntu}).

\begin{Lemma}
Let $\phi:\mathbb{C}\longrightarrow\mathbb{C}$ be a polynomial
function and $R>0$. Then

\begin{center}
$\phi^{-1}D(0,R)$ is connected if only if
$\{z\in\mathbb{C}:\phi'(z)=0\}\subset\phi^{-1}D(0,R)$.
\end{center}
\label{lema443}
\end{Lemma}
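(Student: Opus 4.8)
The plan is to realize $\phi^{-1}D(0,R)$ as a branched cover of the disk and read off its number of connected components from the Riemann--Hurwitz formula. Write $U:=\phi^{-1}D(0,R)=\{z\in\mathbb{C}:|\phi(z)|<R\}$ and let $d=\deg\phi$; since $\phi$ is a nonconstant polynomial, $|\phi(z)|\to+\infty$ as $|z|\to+\infty$, so $U$ is open, bounded and nonempty. Let $c$ denote the number of connected components of $U$. The first step is to show that \emph{every} connected component $V$ of $U$ is simply connected. If it were not, there would be a Jordan curve $\gamma\subset V$ whose bounded interior $\Omega$ meets $\mathbb{C}\setminus V$; but $\gamma\subset U$ is compact, so $\max_{\gamma}|\phi|<R$, and by the maximum modulus principle $|\phi|<R$ on all of $\overline{\Omega}$, whence $\Omega\subset U$. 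Since $\Omega$ is connected and overlaps the open set $V$ near any point of $\gamma$, the set $V\cup\Omega$ is a connected subset of $U$ containing $V$; maximality of $V$ forces $\Omega\subset V$, a contradiction. Hence $\chi(V)=1$ for each component.

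Next I would restrict $\phi$ to a component. For each $V$ the map $\phi|_V\colon V\to D(0,R)$ is holomorphic, nonconstant and proper (the preimage of $\{|w|\le r\}$ is the compact set $\{|\phi|\le r\}$ for every $r<R$, and properness is inherited on restriction to the clopen subset $V\subseteq U$). Its image is open by the open mapping theorem and closed by properness, hence equals $D(0,R)$; thus $\phi|_V$ is a proper branched covering of some degree $d_V\ge 1$. Counting the fibre over a regular value $w\in D(0,R)$ (critical values being finite, one exists) shows that the $d$ preimages of $w$, all lying in $U$, are distributed among the components, so $\sum_V d_V=d$. Now the cited Riemann--Hurwitz formula applied to each proper cover $\phi|_V\colon V\to D(0,R)$ gives $\chi(V)=d_V\,\chi(D(0,R))-r_V$, where $r_V=\sum_{p\in V}(e_p-1)$ is the ramification over $V$. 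Because $e_p-1$ is exactly the order of vanishing of $\phi'$ at $p$, $r_V$ counts the critical points of $\phi$ lying in $V$ with multiplicity. Using $\chi(D(0,R))=1$ together with $\chi(V)=1$ from the first step, this collapses to $r_V=d_V-1$ for every component.

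Summing over components then yields $r_U:=\sum_V r_V=\sum_V(d_V-1)=d-c$, where $r_U$ is the number of zeros of $\phi'$ (with multiplicity) contained in $U$. Since $\phi'$ has precisely $d-1$ zeros with multiplicity in $\mathbb{C}$, I conclude that $c=1$ if and only if $r_U=d-1$, i.e.\ if and only if all zeros of $\phi'$ lie in $U$, which is exactly the assertion that $\phi^{-1}D(0,R)$ is connected precisely when $\{z\in\mathbb{C}:\phi'(z)=0\}\subset\phi^{-1}D(0,R)$. I expect the main obstacle to be the first step: the simple connectedness of the components is the nontrivial input that upgrades the Riemann--Hurwitz \emph{inequality} $\chi(V)\le 1$ into an exact count, and in particular it is what supplies the reverse implication (which the Euler-characteristic bound alone does not give). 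A secondary point requiring care is invoking Riemann--Hurwitz in its form for proper, non-compact branched coverings of the open disk, as recorded in the cited references; this version is unaffected by any critical value that happens to lie on the boundary circle $\{|\phi|=R\}$, since only ramification over the open disk is counted.
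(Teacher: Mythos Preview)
Your argument is correct and follows exactly the route the paper indicates: the paper does not give a proof but simply records the lemma as a particular case of the Riemann--Hurwitz formula, citing Milnor and Morosawa--Nishimura--Taniguchi--Ueda, and your proposal carries out precisely that computation (simple connectedness of the components via the maximum modulus principle, properness of $\phi|_V$, then the Euler-characteristic count $r_U=d-c$). There is nothing to add.
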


\np \textbf{Proof of theorem \ref{teorema442}:}

\np $a)$ If $p_2=1$ and $p_3<\frac{1}{2}$ then $\varphi_0(0)=\varphi_1(0)=\varphi_2(0)=0$ and
$|\varphi_3(0)|=|\varphi_4(0)|=\frac{1}{p_3}-1>1$,

\np which implies from lemma \ref{lema441} that
$(\varphi_n(0))_{n\geq 0}$ is not bounded.

If $p_3<\frac{1}{2}$ and $0<p_2<1-p_3$ then
$\varphi_0(0)=\varphi_1(0)=0$, $\varphi_2(0)=1-\frac{1}{p_2}$,
$|\varphi_3(0)|=\frac{1}{p_3}-1>1$. Furthermore, $p_2<1-p_3$ implies 
$\varphi_4(0)=\frac{1}{p_3}\left(1-\frac{1}{p_3}\right)\left(1-\frac{1}{p_2}\right)-\left(\frac{1}{p_3}-1\right)>1$.

\np Thus, since $|\varphi_3(0)|>1$ and $|\varphi_4(0)|>1$, it
follows from lemma \ref{lema441} that $(\varphi_n(0))_n$ is not
bounded.

Therefore, in both cases we have that $0\notin\mathcal{F}_h$.

Hence, there exists a integer number $n\geq 1$ such that
$0\notin\varphi^{-1}_kD(0,R)$, for all $k\geq n$, where $R$ is the
real number defined at remark \ref{obs442}.

Since $\varphi'_n(0)=0$, for all $n\geq 1$, we deduce from lemma
\ref{lema443} that $\varphi^{-1}_kD(0,R)$ is not connected for all
$k\geq n$. Since $\mathcal{F}_h=\bigcap_{n=0}^{\infty}
\varphi_n^{-1}D(0,R)$ and
$\varphi^{-1}_{n+1}D(0,R)\subset\varphi^{-1}_nD(0,R)$, for all
$n\in\mathbb{N}$, it follows that $\mathcal{F}_h$ is not connected.

\np $b)$ Let $k\geq 3$ and suppose that $p_i=1$ for all $i\in\{2,3,\ldots,k\}$ and
$p_{k+1}<\frac{1}{2}$. Hence, we have that
$\varphi_0(0)=\varphi_1(0)=\ldots=\varphi_{2k-2}(0)=0$ and
$|\varphi_{2k-1}(0)|=|\varphi_{2k}(0)|=\frac{1}{p_{k+1}}-1>1$. Thus,
in the same way that was done in item $a)$, it follows that
$\mathcal{F}_h$ is not connected.

\rightline{$\Box$}

\begin{Corollary}
Suppose that $\displaystyle\liminf_{i\to+\infty} p_i>0$. If $p_i=1$
for all $i\in\{2,3,\ldots,k\}$ and $p_{k+1}<\frac{1}{2}$, for some
$k\geq 2$ then $E$ is a non-connected set.
\end{Corollary}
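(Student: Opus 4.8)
The plan is to reduce the statement about $E$ to the statement about $\mathcal{F}_h$ that was already established in Theorem \ref{teorema442}, via the isomorphism recorded in Remark \ref{obs4422}. Recall that Remark \ref{obs4422} asserts that when $h(z)=\frac{1}{p_2}z^2-\left(\frac{1}{p_2}-1\right)$, the sets $E$ and $\mathcal{F}_h$ are isomorphic through the affine map $l(\lambda)=\frac{1}{p_1}\lambda-\left(\frac{1}{p_1}-1\right)$; since $l$ is an invertible affine self-map of $\mathbb{C}$, it is a homeomorphism, so $E$ is connected if and only if $\mathcal{F}_h$ is connected. Thus it suffices to verify that the polynomial $h(z)=\frac{1}{p_2}z^2-\left(\frac{1}{p_2}-1\right)$ falls under the hypotheses of Theorem \ref{teorema442}.

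First I would note that $h$ is of the form treated in Theorem \ref{teorema442}: writing $h(z)=a_0+a_2z^2$ with $a_2=\frac{1}{p_2}\neq 0$; although the theorem is stated for $h(z)=a_2z^2+\cdots+a_nz^n$ with no constant term, the only property of $h$ actually used in the proof of Theorem \ref{teorema442} is that $h'(0)=0$ (so that $\varphi'_n(0)=0$ for all $n\geq 1$) together with the computation of the orbit $(\varphi_n(0))_{n\geq 0}$ starting from $\varphi_1(0)=h(0)$. When $p_2=1$ we have $h(0)=0$, so $\varphi_0(0)=\varphi_1(0)=0$ and we are exactly in the situation of part $b)$ of Theorem \ref{teorema442} (with the roles of indices shifted by one, since here $h$ plays the part of $q_{F_1}$). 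When $0<p_2<1$, we have $h(0)=1-\frac{1}{p_2}<0$, which corresponds to the hypothesis $0<p_2<1-p_3$ in part $a)$.

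Concretely, under the hypothesis $p_i=1$ for all $i\in\{2,\ldots,k\}$ and $p_{k+1}<\frac{1}{2}$ with $k\geq 2$: if $k=2$ then $p_2=1$, $p_3<\frac12$, and $h(0)=0$, so $\varphi_2(0)=\varphi_3(0)=0$ (using $p_3=1$ is not available, but $r_2=p_2=1$ gives $\varphi_2(0)=0$ and $r_3=p_3$ gives $\varphi_3(0),\varphi_4(0)$), and one checks $|\varphi_3(0)|=|\varphi_4(0)|=\frac{1}{p_3}-1>1$, so by Lemma \ref{lema441} the orbit of $0$ is unbounded, i.e. $0\notin E$; if $k\geq 3$ this is verbatim part $b)$ of Theorem \ref{teorema442}. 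In either case $0\notin E$. Since $\liminf_{i\to\infty}p_i>0$ we may invoke Claim \ref{obs442} (equivalently Proposition \ref{prop443}): there is $R>1$ with $E=\bigcap_{n\geq 0}q_{F_n}^{-1}D(0,R)$ and $q_{F_{n+1}}^{-1}D(0,R)\subset q_{F_n}^{-1}D(0,R)$ for all $n$. Because $0\notin E$, there is an integer $m$ with $0\notin q_{F_k}^{-1}D(0,R)$ for all $k\geq m$; since each $q_{F_n}$ is a polynomial with $q_{F_n}'(0)=0$ for $n\geq 1$, Lemma \ref{lema443} shows $q_{F_k}^{-1}D(0,R)$ is disconnected for all $k\geq m$, and the nesting then forces $E=\bigcap_{n}q_{F_n}^{-1}D(0,R)$ to be disconnected.

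The one point requiring a little care — and the likely main obstacle — is the bookkeeping of indices: in Theorem \ref{teorema442} and in Claim \ref{obs442} the iteration starts with $\varphi_0(z)=z$, $\varphi_1(z)=h(z)$, whereas for $E$ the iteration starts with $q_{F_0}(z)=\frac{z-(1-p_1)}{p_1}$, $q_{F_1}(z)=\frac{1}{p_2}q_{F_0}(z)^2-\left(\frac{1}{p_2}-1\right)$, and the recursion parameters are $r_n=p_{[\frac{n+1}{2}]+1}$ rather than $p_2,p_3,\ldots$ in order. One must check that, after the affine change of variable $l$, the recursion defining $q_{F_n}$ matches the recursion defining $\varphi_n$ for $\mathcal{F}_h$ with $h(z)=\frac{1}{p_2}z^2-(\frac{1}{p_2}-1)$, so that the hypothesis "$p_i=1$ for $i\le k$, $p_{k+1}<\frac12$" lands on exactly the indices that produce $|\varphi_{2k-1}(0)|=|\varphi_{2k}(0)|=\frac{1}{p_{k+1}}-1>1$ in the $q_{F_n}$-orbit of the image of $0$. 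This is precisely the content of Remark \ref{obs4422}, so no new computation is needed; once the identification is in place, the Corollary follows immediately from Theorem \ref{teorema442}, Lemma \ref{lema443}, and Proposition \ref{prop443}.
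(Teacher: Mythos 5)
Your first paragraph is exactly the paper's proof: under the hypothesis $k\geq 2$ forces $p_2=1$, so $h(z)=\frac{1}{p_2}z^2-\left(\frac{1}{p_2}-1\right)=z^2$, which is literally of the form treated in Theorem \ref{teorema442} (no constant term, so your worry in the second paragraph is moot, and the aside that ``$0<p_2<1$ corresponds to $0<p_2<1-p_3$'' is both vacuous here and not actually implied); Theorem \ref{teorema442} (part $a)$ with $p_2=1$, $p_3<\frac12$ when $k=2$, part $b)$ when $k\geq 3$) gives that $\mathcal{F}_h$ is not connected, and since $l$ in Remark \ref{obs4422} is an invertible affine map, hence a homeomorphism, $E=l^{-1}(\mathcal{F}_h)$ is not connected. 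That two-step reduction is the whole of the paper's argument, and that part of your proposal is correct.

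However, the ``concrete'' portion of your third paragraph, where you rerun the disconnectedness argument directly on $E$, is wrong as written. The unbounded orbit you compute is $(\varphi_n(0))_{n\geq 0}$, which shows $0\notin\mathcal{F}_h$; under the identification $q_{F_n}(\lambda)=\varphi_n(l(\lambda))$ this says $l^{-1}(0)=1-p_1\notin E$, not $0\notin E$ (these differ unless $p_1=1$, and $p_1$ is unconstrained in the Corollary). Likewise the claim ``$q_{F_n}'(0)=0$ for all $n\geq 1$'' fails in general: $q_{F_1}'(0)=\frac{2}{p_1p_2}\,q_{F_0}(0)=\frac{-2(1-p_1)}{p_1^2p_2}\neq 0$ when $p_1<1$. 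The common critical point of the $q_{F_n}$, $n\geq 1$, is $z=1-p_1$ (the zero of $q_{F_0}$), since $q_{F_n}'(\lambda)=\frac{1}{p_1}\varphi_n'(l(\lambda))$ and $\varphi_n'(0)=0$; so Lemma \ref{lema443} must be applied at $1-p_1$, not at $0$, if you insist on arguing on the $E$ side. None of this damages your conclusion, because the clean route through Remark \ref{obs4422} and Theorem \ref{teorema442} already suffices --- but the direct-on-$E$ version should either be corrected (replace $0$ by $1-p_1$ throughout) or simply dropped.
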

\begin{proof} Let $h(z)=\frac{1}{p_2}z^2-\left(\frac{1}{p_2}-1\right)=z^2$.
From theorem \ref{teorema442} we have that $\mathcal{F}_h$ is not connected. Therefore, from remark \ref{obs4422} it follows that $E$ is a non-connected set.
\end{proof}

\begin{Conjecture}
There exists $0<\delta <1$ such that if $p_i>\delta$, for all $i\geq
1$ then $E$ is connected. \label{bams}
\end{Conjecture}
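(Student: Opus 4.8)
The plan is to realize $E$ as a decreasing intersection of compact connected sets and then to control critical orbits. By Proposition \ref{prop442} there is $R>1$ with $E=\bigcap_{n\ge 0}K_n$, where $K_n:=q_{F_n}^{-1}\overline{D(0,R)}$; each $K_n$ is compact (the $q_{F_n}$ are nonconstant polynomials), nonempty (e.g. $q_{F_n}(1)=1$ for all $n$, so $1\in E$), and, by Proposition \ref{prop443}, $K_{n+1}\subset K_n$. Since a decreasing sequence of nonempty compact connected subsets of $\mathbb C$ has connected intersection, it suffices to produce $\delta\in(0,1)$ such that $p_i>\delta$ for all $i$ forces every $K_n$ to be connected. (Recall also that $R$ may be enlarged freely: by Remark \ref{obsg} and Proposition \ref{prop441}, $E=\bigcap_n q_{F_n}^{-1}\overline{D(0,R')}$ for every $R'\ge R$.)

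For the connectedness of a single $K_n$ I would invoke Lemma \ref{lema443}: if every critical value of $q_{F_n}$ has modulus $<R$, then $q_{F_n}^{-1}D(0,R)$ is connected, and hence so is its closure; since a nonconstant polynomial is an open map one checks that $q_{F_n}^{-1}\overline{D(0,R)}=\overline{q_{F_n}^{-1}D(0,R)}$, so $K_n$ is connected. Thus everything reduces to the claim: there is $\delta\in(0,1)$ such that, whenever $p_i>\delta$ for all $i$, every critical point $c$ of every $q_{F_n}$ satisfies $|q_{F_n}(c)|<R$. I would in fact aim for the stronger assertion that all such $c$ lie in $E$, which is automatic once the critical points are confined to a fixed disk on which all the $q_{F_j}$ are uniformly bounded.

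To attack the claim I would run a joint induction on $n$, using $q_{F_n}=\tfrac1{r_n}q_{F_{n-1}}q_{F_{n-2}}-\bigl(\tfrac1{r_n}-1\bigr)$ and $q_{F_n}'=\tfrac1{r_n}\bigl(q_{F_{n-1}}'q_{F_{n-2}}+q_{F_{n-1}}q_{F_{n-2}}'\bigr)$. The target invariants are a radius $\rho=\rho(\delta)\ge 1$ and a bound $M=M(\delta)\ge1$, both tending to $1$ as $\delta\to1$, such that for every $n$: all zeros of $q_{F_n}$ lie in $\overline{D(0,\rho)}$, and $\sup_{|z|\le\rho}|q_{F_n}(z)|\le M$. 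Granting these, the Gauss--Lucas theorem places every critical point of $q_{F_n}$ in the convex hull of its zeros, hence in $\overline{D(0,\rho)}$, where $|q_{F_n}|\le M$; taking $R>M$ then closes the argument. The zero--localization step would rest on the identity $q_{F_n}(z)=0\iff q_{F_{n-1}}(z)q_{F_{n-2}}(z)=1-r_n$ together with a lower bound for $|q_{F_{n-1}}q_{F_{n-2}}|$ on circles $|z|=\rho'>\rho$, obtained from the inductive hypothesis on zeros and the growth of the leading coefficients $a_n=\tfrac1{r_n}a_{n-1}a_{n-2}$, plus Rouch\'e's theorem; the sup--norm step would try to reconcile $\tfrac1\delta M^2+\bigl(\tfrac1\delta-1\bigr)\le M$, which is solvable for $M$ near $1$ once $\delta$ is near $1$. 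The base cases $n\le3$ are direct estimates, parallel to those in the proof of Proposition \ref{prop443}.

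The main obstacle is the uniformity in $n$ of these two invariants. The naive estimates leak a fixed amount at each level — bounding both $q_{F_{n-1}}$ and $q_{F_{n-2}}$ by the same $M$, or bounding $|q_{F_j}|$ from below on $|z|=\rho'$ by $(\rho'-\rho)^{F_j}$ — and iterating lets $\rho$ and $M$ drift upward without bound, because the factor $\tfrac1{r_n}>1$ in the recursion is genuinely expanding: the affine map $w\mapsto\tfrac1{r_n}w-(\tfrac1{r_n}-1)$ has the repelling fixed point $w=1$, and in the unperturbed model $q_{F_n}=z^{F_n}$ this is exactly the locus $z^{F_n}=1$, dense on the unit circle, near which the dynamics is most sensitive. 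A successful proof therefore seems to require a finer inductive invariant exploiting cancellation — for instance propagating $|q_{F_n}(z)-z^{F_n}|\le\varepsilon_n$ on $\overline{D(0,1)}$ and $|q_{F_n}(z)|\le(1+\varepsilon_n)|z|^{F_n}$ on $\{|z|\ge1\}$, with $\varepsilon_n$ governed by a suitably weighted sum of the quantities $|1-r_j|$ — and keeping $\sup_n\varepsilon_n$ small is where the real work lies; this may well need a hypothesis stronger than $p_i>\delta$ (for example $\sum_i(1-p_i)<\infty$), so a reasonable first target is to prove the statement under that extra assumption and then attempt to remove it. An alternative, more conceptual route would bypass the $q_{F_n}$ and construct, for $\delta$ close to $1$, a B\"ottcher--type uniformization of $\mathbb C\setminus E$ (equivalently, show the non-autonomous Green's function $G=\lim\tfrac1{F_n}\log^+|q_{F_n}|$ has no critical points in $\mathbb C\setminus E$), but this again amounts to the non-escape of critical points and so meets the same core difficulty.
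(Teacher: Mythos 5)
This statement is Conjecture \ref{bams}: the paper does not prove it, and only motivates it by the result of \cite{ebms} that in the constant case $p_i=p>a$ the set $E$ is a quasi-disk. So there is no proof of the paper to compare with, and what you have written is, by your own account, a program rather than a proof. Your reduction is the natural one and is consistent with the paper's framework: $E=\bigcap_n K_n$ with $K_n$ compact, nonempty (indeed $q_{F_n}(1)=1$ for all $n$) and nested by Proposition \ref{prop443}, so it suffices that each $K_n$ be connected, and by Lemma \ref{lema443} this is exactly the statement that no critical value of $q_{F_n}$ leaves $D(0,R)$ --- precisely the mechanism the paper uses in the opposite direction in Theorem \ref{teorema442}, where non-connectedness is produced by forcing the critical point $0$ to escape. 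The whole content of the conjecture is therefore the uniform-in-$n$ confinement of critical orbits when all $p_i$ are close to $1$, and that is the step your sketch does not supply.

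Moreover, the specific inductive invariants you propose cannot hold as stated, so the induction could not close even in principle. For any $\rho>1$, already in the unperturbed case $p_i\equiv 1$ one has $q_{F_n}(z)=z^{F_n}$ and $\sup_{|z|\le\rho}|q_{F_n}(z)|=\rho^{F_n}\to\infty$, so no bound $M(\delta)$ uniform in $n$ exists on any disk of radius $>1$; and the closing inequality $\frac{1}{\delta}M^2+\bigl(\frac{1}{\delta}-1\bigr)\le M$ is equivalent to $\frac{M^2+1}{M+1}\le\delta<1$, which forces $M<1$, whereas $q_{F_n}(1)=1$ forces $M\ge 1$ on any disk containing $1$. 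So the sup-norm invariant is self-contradictory, and Gauss--Lucas localization of critical points near $\overline{D(0,1)}$ does not help, because near the unit circle the maps behave like $z^{F_n}$ and the fixed point $1$ is repelling, exactly the sensitivity you identify. Your fallback (assuming $\sum_i(1-p_i)<\infty$, or a B\"ottcher/Green's-function uniformization of $\mathbb{C}\setminus E$) is a reasonable research direction --- the second is essentially how \cite{ebms} handles the constant case --- but as it stands the central difficulty is untouched and the conjecture remains open; proving it under a summability hypothesis would in any case establish a strictly weaker statement than the one claimed.
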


The conjecture \ref{bams} is motivated by the fact that the authors in \cite{ebms} proved
that there exits $0<a<1$ such that if $p_i=p>a$, for all $i\geq 1$ then $E$ is quasi-disk.

We can see some possibilities for the set $E$ in the following figures:

\begin{figure}[h!]
\centering
{\tiny 01)} \includegraphics[scale=.30]{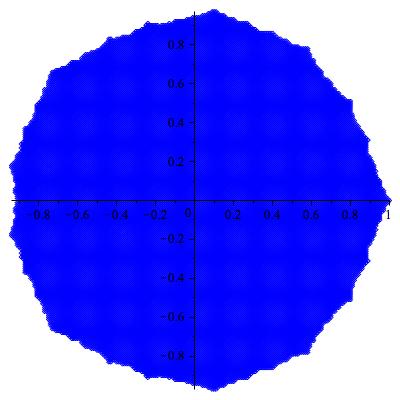}
{\tiny 02)} \includegraphics[scale=.30]{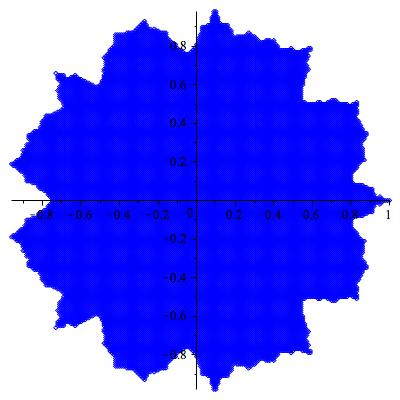}
{\tiny 03)} \includegraphics[scale=.30]{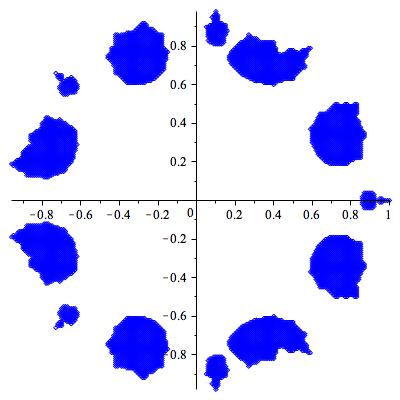}
\caption{\tiny $01)$ $p_1=1$, $p_2=0,999$, $p_3=0,909$, $p_4=0,833$, $p_5=0,769$, $p_6=0,714$, $p_7=0,666$, $p_8=0,625$, $p_9=0,588$, $p_{10}=0,555$, $p_{11}=0,526$, $p_i=1$, for all $i\in\{12,\ldots,17\}$. $02)$ $p_1=1$, $p_2=0,999$, $p_3=1$, $p_4=0,625$, $p_5=0,588$, $p_6=0,625$, $p_7=0,666$, $p_8=0,714$, $p_9=0,769$, $p_{10}=0,833$, $p_{11}=0,909$, $p_i=1$, for all $i\in\{12,\ldots,17\}$. $03)$ $p_1=1$, $p_2=0,999$, $p_3=1$, $p_4=0,588$, $p_5=0,588$, $p_6=0,625$, $p_7=0,666$, $p_8=0,714$, $p_9=0,769$, $p_{10}=0,833$, $p_{11}=0,909$, $p_i=1$, for all $i\in\{12,\ldots,17\}$.}
\end{figure}

\begin{figure}
\centering
{\tiny 04)} \includegraphics[scale=.30]{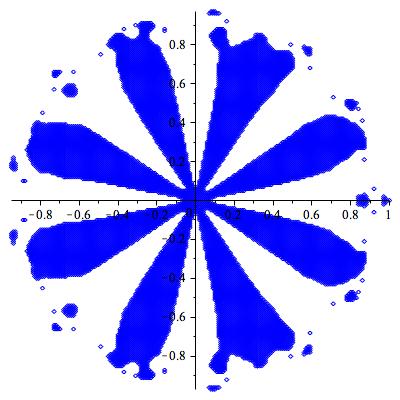}
{\tiny 05)} \includegraphics[scale=.30]{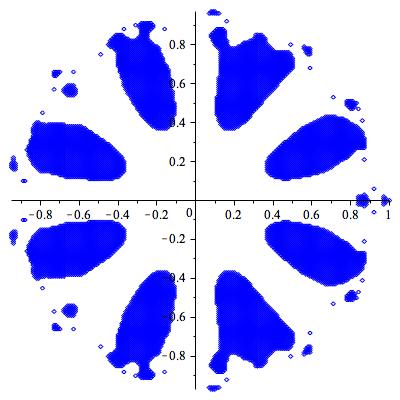}
{\tiny 06)} \includegraphics[scale=.30]{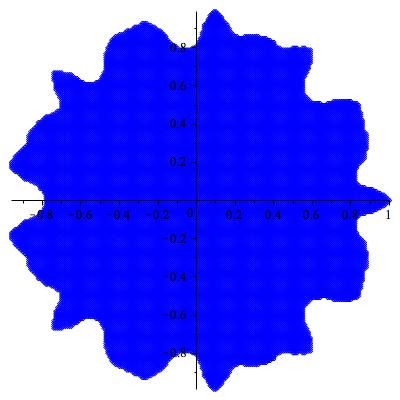}
\caption{\tiny  $04)$ $p_1=1$, $p_2=0,999$, $p_3=1$, $p_4=0,625$, $p_5=0,714$, $p_6=0,403637066$, $p_7=0,833$, $p_8=0,833$, $p_9=909$, $p_i=1$, for all $i\in\{10,\ldots,17\}$. $05)$ $p_1=1$, $p_2=0,999$, $p_3=1$, $p_4=0,625$, $p_5=0,714$, $p_6=0,403$, $p_7=0,833$, $p_8=0,833$, $p_9=909$, $p_i=1$, for all $i\in\{10,\ldots,17\}$. $06)$ $p_1=1$, $p_2=0,999$, $p_3=1$, $p_4=0,588$, $p_5=0,769$, $p_6=0,833$, $p_7=0,909$, $p_8=0,833$, $p_i=1$, for all $i\in\{9,\ldots,17\}$.}
\end{figure}

\begin{figure}
\centering
{\tiny 07)} \includegraphics[scale=.30]{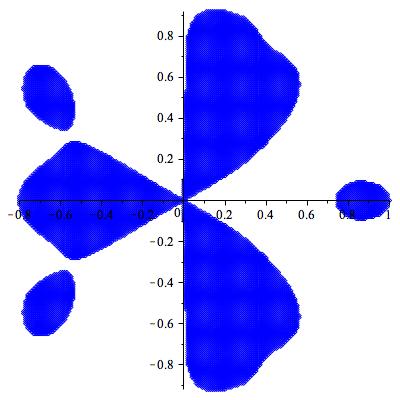}
{\tiny 08)} \includegraphics[scale=.30]{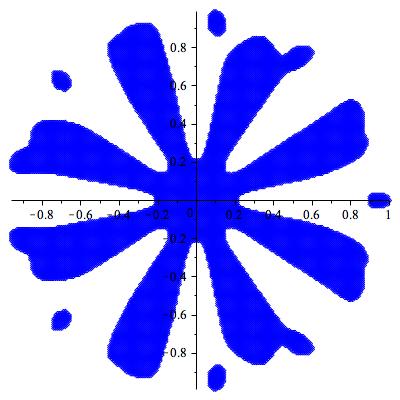}
{\tiny 09)} \includegraphics[scale=.30]{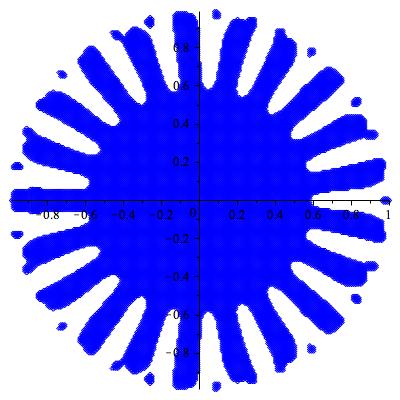}
\caption{\tiny $07)$ $p_1=1$, $p_2=0,999$, $p_3=0,5$, $p_i=1$, for all $i\in\{4,\ldots,17\}$. $08)$ $p_1=1$, $p_2=0,999$, $p_3=1$, $p_4=0,5$, $p_i=1$, for all $i\in\{5,\ldots,17\}$. $09)$ $p_1=1$, $p_2=0,999$, $p_3=1$, $p_4=1$, $p_5=0,5$, $p_i=1$, for all $i\in\{6,\ldots,17\}$.}
\end{figure}

\begin{figure}
\centering
{\tiny 10)} \includegraphics[scale=.30]{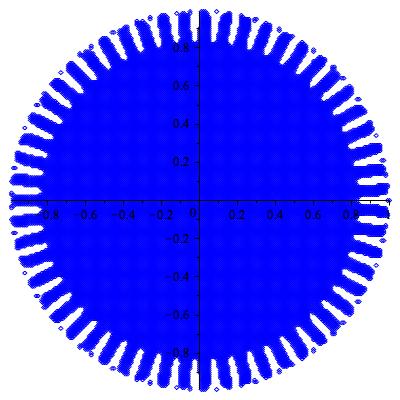}
{\tiny 11)} \includegraphics[scale=.30]{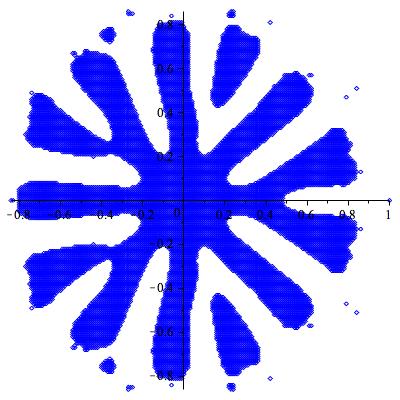}
{\tiny 12)} \includegraphics[scale=.30]{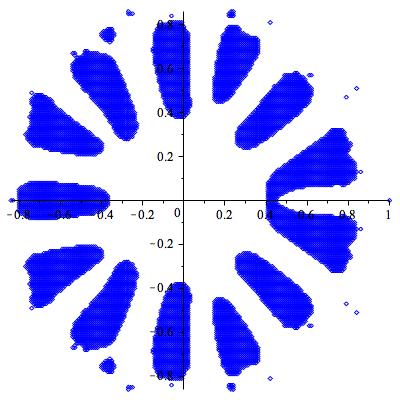}
\caption{\tiny $10)$ $p_1=1$, $p_2=0,999$, $p_3=1$, $p_4=1$, $p_5=1$, $p_6=0,5$ $p_i=1$, for all $i\in\{7,\ldots,17\}$. $11)$ $p_1=1$, $p_2=0,999$, $p_3=1$, $p_4=0,625$, $p_5=0,284864$, $p_6=0,625$, $p_7=0,714$, $p_8=0,769$, $p_9=0,833$, $p_{10}=0,909$, $p_i=1$, for all $i\in\{11,\ldots, 17\}$. $12)$ $p_1=1$, $p_2=0,999$, $p_3=1$, $p_4=0,625$, $p_5=0,284859$, $p_6=0,625$, $p_7=0,714$, $p_8=0,769$, $p_9=0,833$, $p_{10}=0,909$, $p_i=1$, for all $i\in\{11,\ldots, 17\}$.}
\end{figure}

\newpage

\begin{figure}[h!]
\centering
{\tiny 13)} \includegraphics[scale=.30]{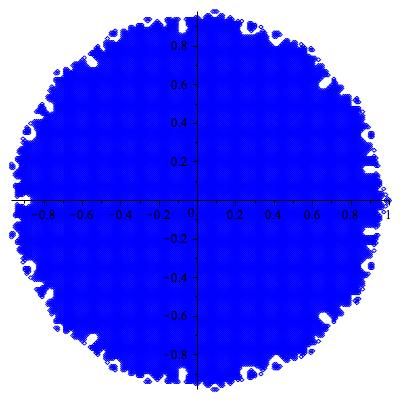}
{\tiny 14)} \includegraphics[scale=.30]{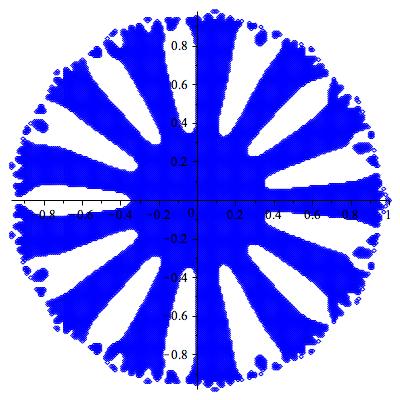}
{\tiny 15)} \includegraphics[scale=.30]{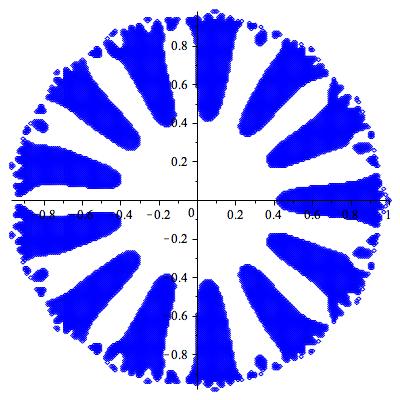}
\caption{\tiny $13)$ $p_1=1$, $p_2=0,999$, $p_3=1$, $p_4=0,833$, $p_5=0,769$, $p_6=0,833$, $p_7=0,465$, $p_8=0,833$, $p_9=909$, $p_i=1$, for all $i\in\{10,\ldots,17\}$. $14)$ $p_1=1$, $p_2=0,999$, $p_3=1$, $p_4=0,833$, $p_5=0,769$, $p_6=0,833$, $p_7=0,46041639$, $p_8=0,833$, $p_9=909$, $p_i=1$, for all $i\in\{10,\ldots,17\}$. $15)$ $p_1=1$, $p_2=0,999$, $p_3=1$, $p_4=0,833$, $p_5=0,769$, $p_6=0,833$, $p_7=0,46041617$, $p_8=0,833$, $p_9=909$, $p_i=1$, for all $i\in\{10,\ldots,17\}$.}
\end{figure}

\subsubsection*{Acknowledgments}
I heartily thank \textit{Ali Messaoudi} for the introduction to the
subject and for the many helpful comments, discussions and suggestions. I want also to thank the financial supports from \textit{FAPESP} grant $2010/19731-7$ and from \textit{CNPq} grant $150311/2015-0$.

\end{document}